\newcommand{\tf}{time-frequency}
\newcommand{\tfs}{time-frequency shift}
\newtheorem{theorem}{Theorem}[section]
\newtheorem{lemma}[theorem]{Lemma}
\newtheorem{corollary}[theorem]{Corollary}
\newtheorem{proposition}[theorem]{Proposition}
\newtheorem{definition}[theorem]{Definition}
\newtheorem{example}[theorem]{Example}
\newtheorem{remark}[theorem]{Remark}
\newcommand{\beqa}{\begin{eqnarray*}}
\newcommand{\eeqa}{\end{eqnarray*}}
\DeclareMathOperator*{\diag}{diag}
\DeclareMathOperator*{\Sym}{Sym}
\DeclareMathOperator*{\eig}{eig}
\DeclareMathOperator*{\Sp}{{Sp}}
\newcommand{\field}[1]{\mathbb{#1}}
\newcommand{\bR}{\field{R}}        
\newcommand{\bC}{\field{C}}        
\def\la{\lambda}
\def\cF{\mathcal{F}}              
\def\cS{\mathcal{S}}
\def\cD{\mathcal{D}}
\def\cB{\mathcal{B}}
\def\cM{\mathcal{M}}
\def\cA{\mathcal{A}}
\def\cI{\mathcal{I}}
\def\rd{\bR^d}
\def\rdd{{\bR^{2d}}}
\def\lrd{L^2(\rd)}
\def\intrd{\int_{\rd}}
\def\intrdd{\int_{\rdd}}
\def\R{\right)}
\def\<{\left<}
\def\>{\right>}
\def\mv1{M_v^1}
\def\phas{(x,\xi )}
\def\mn{(m,n)}
\def\mn'{(m',n')}
\newcommand{\norm}[1]{\lVert#1\rVert}
\def\o{\xi}
\def\R{\mathbb{R}}
\def\Ren{\mathbb{R}^d}
\def\sch{\mathcal{S}}
\def\f{\varphi}
\def\Sn2{S_{2}(L^{2}(\Ren))}
\def\S1{S_{1}(L^{2}(\Ren))}
\def\sig00{\sigma_{0,0}}
\def\la{\langle}
\def\ra{\rangle}
\begin{document}
\begin{abstract} 
To overcome the impossibility of representing the energy of a signal simultaneously in time and frequency, many time-frequency representations have been introduced in the literature. Some of these are recalled in the Introduction.

In this work we propose a unified approach of the previous theory by means of metaplectic Wigner distributions  $W_\cA$, with $\cA$ symplectic matrix in $Sp(2d,\bR)$, which were introduced by Cordero, Rodino (2022)  and then widely studied in  subsequent papers.
Namely, the short-time Fourier transform and the most popular members of the Cohen's class can be represented via metaplectic Wigner distributions. In particular, 
 we introduce $\cA$-metaplectic spectrograms which contain the classical ones and their variations 
  arising from the $\tau$-Wigner distributions of Boggiatto, De Donno, and Oliaro (2010).
 
 We provide a complete characterization of those $\cA$-Wigner distributions which give rise to generalized spectrograms. This characterization is related to the block decomposition of the symplectic matrix $\cA$.  Moreover, a characterization of the $L^p$-boundedness of both $\cA$-Wigner distributions and related metaplectic pseudodifferential operators is provided.
\end{abstract}

\title[A unified approach to time-frequency representations]{A unified approach to time-frequency representations and Generalized Spectrograms}

\author{Elena Cordero}
\address{Universit\`a di Torino, Dipartimento di Matematica, via Carlo Alberto 10, 10123 Torino, Italy}
\email{elena.cordero@unito.it}
\author{Gianluca Giacchi}
\address{Università di Bologna, Dipartimento di Matematica,  Piazza di Porta San Donato 5, 40126 Bologna, Italy; Euler Institute, Università della Svizzera italiana, East Campus, Sector D, Via la Santa 1, 6962 Lugano, Switzerland.}
\email{gianluca.giacchi2@unibo.it}
\author{Luigi Rodino}
\address{Universit\`a di Torino, Dipartimento di Matematica, via Carlo Alberto 10, 10123 Torino, Italy}
\email{luigi.rodino@unito.it}
\thanks{}
\subjclass[2010]{42A38,46F10,46F12,81S30}

\keywords{Time-frequency analysis, Spectrogram, Short-time Fourier transform, Wigner distribution, Cohen's class}
\maketitle

\begin{center}
	\textit{Dedicated to Karlheinz Gr\"ochenig on the occasion of his 65th birthday}
\end{center}

\section{Introduction}

The classical method for analyzing a signal $f$ with time-varying frequency content is to multiply it by the translation $T_x \f$ of a window function $\f$ essentially localized in a neighbourhood of the origin and then taking the Fourier transform of the modified signal.
In other words, we consider the  \emph{Short-time Fourier Transform} (STFT) of $f$, formally defined as
\begin{equation}\label{stftdef}
	V_\f f\phas=\intrd f(t)\overline{\f(t-x)}\,e^{-2\pi i t \cdot \o}dt,\quad x,\o\in\rd,
\end{equation}

In the STFT analysis, one intends to achieve the best time and frequency resolutions. 
However, the resolution in the time domain is limited by the width of the window function $\f$
 and similarly, the resolution in the frequency domain is limited by the width of its Fourier transform, $\hat \f$. By Heisenberg's uncertainty principle,  $\f$ and $\hat \f$ cannot be both well localized, thus the width of the window function  $\f$ plays a fundamental role:  a window $\f$ well localized in time  gives good resolution in time and  necessarily implies poor frequency design  whereas  a long time window $\f$ yields poor time resolution but good frequency design.

 The squared magnitude of the STFT is called \emph{the spectrogram}:
 \begin{equation}\label{Spectrogtam}
 		{\Sp}_\f(f)=|V_\f f|^2.
 \end{equation}
The spectrogram is a visual representation of the spectrum of frequencies of a signal as it varies with time \cite{AGR2016,Janssen91}.  It displays poor time-frequency localization properties due to the presence of the analysis window function $\f$.  In order to avoid the time-frequency resolution trade-off problem of the spectrogram, many other time-frequency representations have been introduced in the literature. Among the many contributions on the topic, we recall \cite{Cohen1,Cohen2,Elena-book,FollandSit89,GalleaniCohen,Gos11,book}.
One of the most popular one is the Wigner distribution, introduced by Wigner \cite{Wigner32} in the context of Quantum Mechanics and first applied to signal analysis by  Ville \cite{Ville48}. After that, there have been so many works on the topic that it is impossible to cite them all. 
\begin{definition} Consider $f,g\in\lrd$. 
	The cross-Wigner distribution $W(f,g)$ is
	\begin{equation}\label{CWD}
		W(f,g)(x,\xi)=\intrd f(x+\frac t2)\overline{g(x-\frac t2)}e^{-2\pi i t\cdot \xi}\,dt.
	\end{equation} If $f=g$ we write $Wf=W(f,f)$, the  Wigner distribution of $f$.
\end{definition}
The central role of this representation is provided by the large number of desirable properties it enjoys, see the textbooks \cite{KB2020,Cohen2,deGossonWigner,book,hlawbook,WongWeylTransform1998}.

On the other hand, due to the uncertainty principle again, the Wigner distributions presents drawbacks as well. For example, it lacks of positivity and its interpretation as a phase-space distribution of the signal energy density is troublesome, see the results on Hudson's Theorem \cite{hudson74,janssenhuds84}.

This is the main reason for the introduction of the Cohen's class \cite{Cohen1}
\[
Q(f,g)=W(f,g)*\sigma, \quad f,g\in\cS(\rd),\]
which is obtained by convolving the cross-Wigner distribution with a suitable kernel function $\sigma$, called the Cohen kernel, with the aim of preserving the good properties of the cross-Wigner distribution while dumping the undesirable ones. The  Cohen's class contains many popular time-frequency representations and can be viewed as a unifying framework for their study.
Among these representations we mention the $\tau$-Wigner distributions, $\tau\in\bR$,
defined as
\begin{equation}\label{tauWigner}
	W_\tau(f,g)(x,\xi)=\int_{\rd} f(x+\tau t)\overline{g(x-(1-\tau)t)}e^{-2\pi i\xi \cdot t}dt, 
\end{equation}
for $f,g\in L^2(\rd)$. The case $\tau=1/2$ is the Wigner one.  In high-dimensional complex features information processing $\tau$-Wigner distributions   play a crucial role \cite{ZJQ21}, but they have been investigated in many other frameworks, see \cite{bastianoni2023tauquantization,boghuds, BdDO2,cdet18,cnt18,janssenposspread97}.   Their  Cohen kernel is given by (cf. \cite[Proposition 5.6]{BdDO2}):
\begin{equation}\label{kerneltau}
	\sigma_{\tau}\left(x,\o\right)=\frac{2^{d}}{\left|2\tau-1\right|^{d}}e^{2\pi i\frac{2}{2\tau-1}x\cdot \o}, \quad (x,\o)\in\rdd, \quad \tau \in \bR\setminus\{ 1/2\}.
\end{equation}

The interpretation of $\tau$ as a perturbation parameter of the Wigner transform has led to natural generalizations of the Wigner distributions using an arbitrary linear mapping of the pair $(x,y)\in\rdd$, cf \cite{bcoquadratic,BCGT2020,CT2020,toftbil17}. Namely, 
\begin{equation}\label{BAe}
	\mathcal{B}_{\mathbb{A}}\left(f,g\right)\left(x,\o\right)=\int_{\mathbb{R}^{d}}f\left(\mathbb{A}_{11}x+\mathbb{A}_{12}y\right)\overline{g\left(\mathbb{A}_{21}x+\mathbb{A}_{22}y\right)}e^{-2\pi i\o\cdot y}dy,
\end{equation}

where $\mathbb{A}=\left(\begin{array}{cc}
	\mathbb{A}_{11} & \mathbb{A}_{12}\\
	\mathbb{A}_{21} & \mathbb{A}_{22}
\end{array}\right)$ is a $2d\times 2d$ invertible matrix.

It was shown in \cite[theorem 1.1]{CT2020} that $\mathcal{B}_{\mathbb{A}}$ belongs to the Cohen's class if and only if  $\mathbb{A}$ displays  the following block decomposition: 
\begin{equation}\label{AM}
	\mathbb{A}=\mathbb{A}_{M}=\left(\begin{array}{cc}
		I_{d\times d} & M+(1/2)I_{d\times d}\\
		I_{d\times d} & M-(1/2)I_{d\times d}
	\end{array}\right),
\end{equation}
where $I_{d\times d}$ is the $d\times d$ identity matrix and  $M\in\mathbb{R}^{d\times d}$. In this case, we have 
\begin{equation}\label{Bamtheta}
	\mathcal{B}_{\mathbb{A}_{M}}\left(f,g\right)=W\left(f,g\right)*\theta_{M},
\end{equation}
where the Cohen's kernel $\theta_{M}\in \cS'(\rdd)$ is given by
\begin{equation}\label{thetaM}
	\theta_{M}=\cF_\sigma \chi_M,\quad \mbox{with}\quad \chi_M(x,\xi)=e^{2\pi i \xi\cdot Mx},
\end{equation}
where  $\cF_\sigma$ is the symplectic Fourier transform (cf. \eqref{SyFT} below).

It is well known that if $M$ is invertible, then the symplectic Fourier transform $\cF_\sigma \chi_M$   can be computed explicitly so that
\begin{equation}\label{thM}
	\theta_{M}\left(x,\o\right)=\frac{1}{\left|\det M\right|}e^{2\pi ix\cdot M^{-1}\o}.
\end{equation}
In particular, choosing $M=\left( \tau-1/2\right)I_{d\times d}$, with $\tau\in \bR\setminus \{1/2\}$, we recapture the $\tau$-kernels in \eqref{kerneltau}.


In \cite{CR2021} two of the authors introduced a new  perspective in this field: time-frequency representations can be viewed as images of metaplectic operators, which become the real protagonists in signal analysis, as they have the potential of unify a large part of it, as suggested in \cite{CGshiftinvertible,CGshiftinvertible,CG-Excursus,Fuhr,Giacchi,Zhang21bis,Zhang21}. For a fixed metaplectic operator $\hat\cA\in Mp(2d,\bR)$ and related symplectic matrix $\cA\in Sp(2d,\bR)$, the metaplectic Wigner distribution $W_\cA$ is defined by
\begin{equation}\label{defMetWigintro}
	W_\cA(f,g)=\hat\cA(f\otimes \bar g),\quad f,g\in L^2(\rd).
\end{equation}
We refer to Subsection \ref{subsec:26} for the definition of metaplectic operators and related symplectic matrices, and to Subsection \ref{2.4} for the main properties of $W_\cA$. 

Let us highlight that the STFT and the $\tau$-Wigner representations are examples of $\cA$-Wigner distributions.  Namely, $V_gf=\widehat{A_{ST}}(f\otimes\bar g)$ and $W_\tau(f,g)=\widehat{A_\tau}(f\otimes\bar g)$, where
\begin{equation}\label{AST}
	A_{ST}=\begin{pmatrix}
		I_{d\times d} & -I_{d\times d} & 0_{d\times d} & 0_{d\times d}\\
		0_{d\times d} & 0_{d\times d} & I_{d\times d} & I_{d\times d}\\
		0_{d\times d} & 0_{d\times d} & 0_{d\times d} & -I_{d\times d}\\
		-I_{d\times d} & 0_{d\times d} & 0_{d\times d} &0_{d\times d}
	\end{pmatrix}
\end{equation}
and
\begin{equation}\label{Atau}
	A_\tau=\begin{pmatrix}
		(1-\tau)I_{d\times d} & \tau I_{d\times d} & 0_{d\times d} & 0_{d\times d}\\
		0_{d\times d} & 0_{d\times d} & \tau I_{d\times d} & -(1-\tau)I_{d\times d}\\
		0_{d\times d} & 0_{d\times d} & I_{d\times d} & I_{d\times d}\\
		-I_{d\times d} & I_{d\times d} & 0_{d\times d} & 0_{d\times d}
	\end{pmatrix}.
\end{equation}
In particular, for $\tau=1/2$, $W(f,g)=\widehat A_{1/2}(f\otimes\bar g)$. 

Also, the bilinear mappings $\mathcal{B}_{\mathbb{A}}$, up to the constant  $|\det \mathbb{A}|$, defined in \eqref{BAe}, are instances of metaplectic Wigner distributions. In fact, any $\mathcal{B}_{\mathbb{A}}$ can be splitted into the product of two  metaplectic operators: the partial Fourier transform with respect to the second variable $\cF_2$ and the change of variables
$$ \mathfrak{T}_\mathbb{A}f:=|\det(\mathbb{A})|^{1/2}\,f(\mathbb{A}\cdot),$$
 see Example \ref{es22} below. In detail, 
$$\mathcal{B}_{\mathbb{A}}(f,g)=|\det \mathbb{A}|^{-1/2}\cF_2\mathfrak{T}_\mathbb{A}(f\otimes\bar g), \qquad f,g\in L^2(\rd).$$
In particular, since $\det \mathbb{A}_M=1$, the Cohen distribution $B_{\mathbb{A}_M}$ in \eqref{Bamtheta} is a metaplectic Wigner distribution.

In conclusion,  the linear perturbations of the Wigner transform introduced by Bayer et al. \cite{BCGT2020}, and later developed in \cite{CT2020}, are special instances of metaplectic Wigner distributions.

A relevant sub-class of the metaplectic Wigner distributions, containing the $\tau$-Wigner representations and their generalizations, is given by those satisfying the Cohen's condition.
By \cite[Theorem 2.11]{CR2022}, a metaplectic Wigner distribution $W_\cA$ belongs to the Cohen's class if and only if
\begin{equation}\label{covMWDE}
	W_\cA(f,g)=W(f,g)\ast \cF^{-1}(e^{-i\pi {\mathcal M}_\cA\zeta\cdot\zeta}).
\end{equation}
To be definite, if the symplectic matrix $\cA$ has block decomposition 
\begin{equation}\label{blockDecA}
	\cA=\begin{pmatrix}
		A_{11} & A_{12} & A_{13} & A_{14}\\
		A_{21} & A_{22} & A_{23} & A_{24}\\
		A_{31} & A_{32} & A_{33} & A_{34}\\
		A_{41} & A_{42} & A_{43} & A_{44}
	\end{pmatrix},
\end{equation}
then $W_\cA$ is in the Cohen's class, that is
\[
	W_\cA(f,g)=\kappa_\cA\ast W(f,g), \qquad f,g\in L^2(\rd),
\] 
for some {\em Cohen kernel} $\kappa_\cA\in\cS'(\rdd)$, if and only $\cA$ is of the form
\begin{equation}\label{A-covariant}
	\cA=\left(\begin{array}{cccc}
		A_{11} & I_{d\times d}-A_{11}&A_{13}&A_{13}\\
		A_{21}&-A_{21}&I_{d\times d}- A^T_{11} &-A^T_{11}\\
		0_{d\times d}&0_{d\times d}&I_{d\times d} &I_{d\times d}\\
		-I_{d\times d}&I_{d\times d}& 0_{d\times d} &0_{d\times d}\\
	\end{array}\right).
\end{equation}
with $A_{13}^T=A_{13}$ and $A_{21}^T=A_{21}$. 

If $W_\cA$ is a covariant metaplectic Wigner distribution as in \eqref{covMWDE}, then ${\mathcal M}_\cA$ is the symmetric matrix 
\begin{equation}\label{defBA}
	{\mathcal M}_\cA=\begin{pmatrix}
		A_{13} & \frac{1}{2}I_{d\times d}-A_{11}\\
		\frac{1}{2}I_{d\times d}-A_{11}^T & -A_{21}
	\end{pmatrix}.
\end{equation}

The Cohen distribution $B_{\mathbb{A}_M}$ defined in \eqref{Bamtheta} falls in the previous class. In fact, we shall show (see Proposition \ref{prop-conn} below) that
\begin{equation}\label{connection}
		\mathcal{B}_{\mathbb{A}_{M}}\left(f,g\right)=W\left(f,g\right)*\theta_{M}=W\left(f,g\right)*\cF^{-1}(e^{-i\pi {\mathcal M}_\cA\zeta\cdot\zeta})
\end{equation}
where 
\begin{equation}\label{MA}
{\mathcal M}_\cA=\begin{pmatrix}
	0_{d\times d} & M\\
	M^T & 	0_{d\times d}
\end{pmatrix},
\end{equation}
and $M$ is the $d\times d$ matrix appearing in \eqref{AM}.

In the next sections we present new results for two other relevant sub-classes of the metaplectic distributions $W_A$, namely shift-invertible distributions and generalized spectrograms. 

The first part of this paper is dedicated to the boundedness properties of new quantizations rules for pseudodifferential operators, defined by means of shift-invertible distributions, introduced in \cite{CR2022}. These distributions were initially defined through a translation-invariance like condition, that we rephrased in the next Theorem \ref{CG-thm1}, finding an equivalent definition that we shall use in the present paper. Relevant examples are the STFT and the $\tau$-Wigner distributions for $\tau\not=0,1$. They enjoy good continuity properties; results on modulation spaces introduced by Feichtinger in \cite{feichtinger-modulation} were proved in \cite{CR2022}. Here, following ideas from \cite{BdDO2}, we shall study boundedness on Lebesgue spaces, see the next Proposition \ref{CGR-cor1}, with related applications to pseudodifferential operators  defined by
$$\la Op_A(a)f,g\ra= \la a,W_A(g,f)\ra,$$
see the next Theorem \ref{CGR-thm1}. 

We now turn our attention to a sub-class of the Cohen's one called \emph{generalized spectrograms}, introduced and studied by Boggiatto et al.    in \cite{BdDO3,BDOJMAA2007,BdDO2,BdDO4}, which extends the classical  one in \eqref{Spectrogtam} and will be the main focus of this manuscript.
\begin{definition}\label{def1.2}
	 The generalized spectrogram with windows $\phi$ and $\psi$ is the time-frequency representation
\begin{equation}\label{spect}
	{\Sp}_{\phi,\psi}(f,g)=V_\phi f\overline{V_\psi g},
\end{equation}
for $\phi,\psi,f,g$ in suitable function or distributional spaces.
\end{definition}
Let $\cI F(x)=F(-x)$ be the flip operator, then it was shown in \cite{BdDO3} that
\begin{equation}\label{gspW}
	V_\phi f \overline{V_\psi g}= W(f,g)\ast W(\cI\psi,\cI\phi).
\end{equation}
Therefore, generalized spectrograms belong to the Cohen's class. In fact, they are a \emph{proper} sub-class: in \cite{BdDO3} it 
was proved that the Wigner representation is not a generalized spectrogram and, more generally in \cite{BdDO2}, that the $\tau$-Wigner is a generalized spectrogram if and only if $\tau=0$ or $\tau=1$.
The main result of this manuscript is Theorem \ref{thmFond} below, which characterizes the metaplectic Wigner distributions $W_\cA$ which are generalized spectrograms, that is for suitable windows $\phi,\psi$,
$$W_\cA(f,g)=V_\phi f\overline{V_\psi g},$$
in terms of the block decomposition of the symplectic matrix $\cA$ (to give a precise meaning to \eqref{spect}, when $\phi,\psi\in\cS'(\rd)$ we shall assume $f,g\in\cS(\rd)$). As a byproduct, we recapture the result that $\tau$-Wigner distributions are generalized spectrograms if and only if $\tau=0$ or $\tau=1$; also in Corollary \ref{corFPhiC} we compute explicitly $\theta_M$ in \eqref{thetaM} and \eqref{defBA}, even though $M$ is not invertible.  

Finally, given any $\cA$, $\cB$ in $Sp(2d,\bR)$, we introduce the 
   \textbf{$(\cA, \cB)$-metaplectic spectrogram} with windows $\phi$ and $\psi$  as
\[
{\Sp}^{\cA,\cB}_{\phi,\psi}(f,g):=W_\cA(f,\phi)\overline{W_\cB(g,\psi)}.
\]
For $\cA=A_{\tau_1}$ and $\cB=A_{\tau_2}$, $\tau_1,\tau_2\in\bR$, we recapture the parameterized two window spectrogram introduced in \cite{BdDO4}, whereas $\cA=\cB=A_{ST}$ in \eqref{AST} gives the classical one \eqref{Spectrogtam}.  In particular, we focus on the case $\cA=\cB$, which is called $\cA$-metaplectic spectrogram 	${\Sp}^\cA_{\phi,\psi}(f,g)$, and we answer the question when a metaplectic Wigner distribution can be represented in terms of $\cA$-metaplectic spectrogram.

{\bf Outline.}   Section $2$ contains the preliminaries required for this study, the definition of metaplectic Wigner distributions $W_\cA$ , their main properties, and related pseudodifferential calculus. It also shows that the Cohen distribution $\cB_{\mathbb{A}_M}$   defined in \eqref{Bamtheta} is a metaplectic Wigner distribution. Section $3$ is devoted to the boundedness properties of $W_\cA$ and related pseudodifferential operators on Lebesgue spaces. Section $4$ is the main focus of our study, containing the characterization of those metaplectic Wigner distributions that define generalized spectrograms. We extend the definition of generalized spectrograms to the wider class of metaplectic Wigner distributions in Section $5$. Finally, Section $6$ contains a thorough study of the $1$-dimensional setting: we shall show that the class of shift-invertible distributions has empty intersection with the class of generalized spectrograms considered in Section $4$ and the union of the two (disjoint) classes  gives exactly all the $\cA$-Wigner distributions of the Cohen's class.

\section{Preliminaries}\label{sec:preliminaries}
\textbf{Notation.} We denote 
$xy=x\cdot y$ (scalar product on $\Ren$). We denote by $\sch(\Ren)$ the Schwartz class and $\sch'(\Ren)$ is the space of tempered distributions. The brackets  $\la f,g\ra$ denote the extension to $\sch' (\Ren)\times\sch (\Ren)$ of the inner product $\la f,g\ra=\int f(t){\overline {g(t)}}dt$ on $L^2(\Ren)$ (conjugate-linear in the second component). We write a point in the phase space (or \tf\ space) as
$z=(x,\xi)\in\rdd$, and  the corresponding phase-space shift (\tfs )
acts on a function or distribution  as
\begin{equation}
\label{eq:kh25}
\pi (z)f(t) = e^{2\pi i \xi\cdot t} f(t-x), \, \quad t\in\rd.
\end{equation}
If $t\in\rd$, the Dirac delta distribution $\delta_t\in\cS'(\rd)$ is characterized by
\[
	\la\delta_t,\varphi\ra:=\overline{\varphi(t)}, \qquad \varphi\in\cS(\rd).
\]
The notation $f\lesssim g$ means that there exists $C>0$ such that $ f(x)\leq Cg(x)$ holds for all $x$. The symbol $\lesssim_t$ is used when we stress that $C=C(t)$. If $ g\lesssim f\lesssim g$ or, equivalently, $ f \lesssim g\lesssim f$, we write $f\asymp g$. For a pair $E_1,E_2$ of Banach spaces,
$BL(E_1,E_2)$ is the space of linear and bounded operators $T: E_1\to E_2$.
If $M\in\bR^{d\times d}$, we use $\eig(M)$ to denote the set of the eigenvalues of $M$, whereas $\Sym(d,\bR)$ denotes the space of symmetric real $d\times d$ matrices and $\diag(\lambda_1,\ldots,\lambda_d)$ denotes the diagonal $d\times d$ matrix with $j$-th diagonal entry given by $\lambda_j$. Finally, $\det(M)=\prod_{\lambda\in\eig(M)\setminus\{0\}}\lambda$ denotes the pseudo-determinant of a given matrix $M\in\bR^{d\times d}$ and $M_{j\ast}$ denotes its $j$-th row ($j=1,\ldots,d$).
For two measurable functions $f,g:\rd\to\bC$, we set $f\otimes g(x,y):=f(x)g(y)$. If $X,Y$ are vector spaces, $X\otimes Y$ is the unique completion of $\text{span}\{x\otimes y : x\in X, y\in Y\}$. If $X(\rd)=L^2(\rd)$ or $\cS(\rd)$, the set $\text{span}\{f\otimes g:f,g\in X(\rd)\}$ is dense in $X(\rdd)$. Thus, for all $f,g\in\cS'(\rd)$, the operator $f\otimes g\in\cS'(\rdd)$ characterized by its action on $\varphi\otimes\psi\in\cS(\rdd)$ by
\[
	\la f\otimes g,\varphi\otimes\psi\ra = \la f,\varphi\ra\la g,\psi\ra
\]
extends uniquely to a tempered distribution of $\cS'(\rdd)$. The subspace $\text{span}\{f\otimes g: f,g\in\cS'(\rd)\}$ is dense in $\cS'(\rdd)$.

\subsection{Fourier transform}
The Fourier transform of $f\in\cS(\rd)$ is the function $\hat f\in\cS(\rd)$ defined as:
\[
	\hat f(\xi)=\int_{\rd} f(x)e^{-2\pi i\xi\cdot x}dx, \qquad \xi\in\rd.
\]
The operator $f\in\cS(\rd)\mapsto\hat f\in\cS(\rd)$ is a surjective isomorphism and, therefore, the relation
\[
	\langle \hat f,\hat\varphi\rangle=\langle f,\varphi\rangle, \qquad \varphi\in\cS(\rd).
\]
defines a distribution $\hat f\in\cS'(\rd)$ which is called \emph{Fourier transform} of $\hat f$. 

We denote with $\cF f:=\hat f$ the Fourier transform operator, which is also an isomorphism of $\cS'(\rd)$ onto itself, and a unitary operator on $L^2(\rd)$.

If $f\in\cS'(\rdd)$, we set $\cF_1f$ and $\cF_2f$ the partial Fourier transforms with respect to the first and to the second variables respectively, defined as:
\[
	\cF_1f(\xi,y)=\int_{\rd}f(x,y)e^{-2\pi i\xi\cdot x}dx, \qquad \cF_2f(x,\eta)=\int_{\rd}f(x,y)e^{-2\pi i\eta\cdot y}dy,
\]
for every $f\in\cS(\rdd)$, $x,y,\xi,\eta\in\rd$.

The symplectic Fourier transform $\cF_{\sigma}$ of a function $F$ on the phase space $\rdd$ is defined as 
\begin{equation}\label{SyFT}
	\cF_{\sigma}F(x,\o)=  \cF F (\o,-x).
\end{equation} 
\subsection{Time-frequency analysis tools}\label{subsec:23}
The \textit{short-time Fourier transform} of $f\in L^2(\rd)$ with respect to the window $g\in L^2(\rd)$ is the time-frequency representation defined in \eqref{stftdef}.
This definition extends to $(f,g)\in\cS'(\rd)\times\cS(\rd)$ by antilinear duality as $V_gf(x,\xi)=\langle f,\pi(x,\xi)g\rangle$.
The definition of the STFT extends to $\cS'(\rd)\times\cS'(\rd)$. In fact, $$V_gf=\cF_2\mathfrak{T}_{ST}(f\otimes\bar g), \qquad f,g\in\cS'(\rd),$$ where $\mathfrak{T}_{ST}F(x,y)=F(y,y-x)$ and $\cF_2$ is the partial Fourier transform with respect to the second coordinate, cf. Example \ref{es22} below. 

Recall the  $\tau$-Wigner distributions  ($\tau\in\bR$) defined in \eqref{tauWigner}.
The cases $\tau=0$ and $\tau=1$ are the so-called (cross-)\textit{Rihacek distribution}
\begin{equation}\label{RD}
W_0(f,g)(x,\xi)=f(x)\overline{\hat g(\xi)}e^{-2\pi i\xi\cdot x}, \quad \phas\in\rdd,
\end{equation}
 and (cross-)\textit{conjugate Rihacek distribution}
 \begin{equation}\label{CRD}
 W_1(f,g)(x,\xi)=\hat f(\xi)\overline{g(x)}e^{2\pi i\xi\cdot x}, \quad \phas\in\rdd.
 \end{equation}
 Observe that $$W_\tau (f,g)=\cF_2\mathfrak{T}_{\tau}(f\otimes\bar g), \qquad f,g\in\cS'(\rd),$$ where for any $F$ on $\rdd$, $$\mathfrak{T}_{\tau}F(x,y)=F(x+\tau y,x-(1-\tau)y),\quad x,y\in\rd.$$

\subsection{The symplectic group $Sp(d,\mathbb{R})$ and the metaplectic operators}\label{subsec:26}
	The \emph{symplectic group} $Sp(d,\bR)$ is the group of matrices $S\in\bR^{2d\times 2d}$ such that
	\begin{equation}\label{fundIdSymp}
	S^TJS=J,\end{equation}  the matrix $J$ being defined as
	\begin{equation}\label{defJ}
	J=\begin{pmatrix}
		0_{d\times d} & I_{d\times d}\\
		-I_{d\times d} & 0_{d\times d}
	\end{pmatrix},
\end{equation}
	where $I_{d\times d}\in\bR^{d\times d}$ is the identity matrix and $0_{d\times d}$ is the zero matrix of $\bR^{d\times d}$.
	
	If we write the block decomposition of $S\in Sp(d,\bR)$
	\begin{equation}\label{blocksA}
		S=\begin{pmatrix} A & B\\
		C & D\end{pmatrix}
	\end{equation}
	with $A,B,C,D\in\bR^{d\times d}$, \eqref{fundIdSymp} is equivalent to:
	\begin{align*}
			&(R1) \qquad \text{$A^TC=C^TA$},\\
			&(R2) \qquad \text{$B^TD=D^TB$},\\
			&(R3) \qquad \text{$A^TD-C^TB=I_{d\times d}$},
	\end{align*}
	and it can be proved that $\det(S)=1$. The inverse of a symplectic matrix $S$ with block decomposition \eqref{blocksA} is given in terms of its blocks by:
	\begin{equation}\label{invSymp}
		S^{-1}=\begin{pmatrix} D^T & -B^T\\
		-C^T & A^T\end{pmatrix}.
	\end{equation}
  
	For $E\in GL(d,\bR)$ and $C\in Sym(d,\bR)$, we define
	\begin{equation}\label{defDLVC}
		\cD_E:=\begin{pmatrix}
			E^{-1} & 0_{d\times d}\\
			0_{d\times d} & E^T
		\end{pmatrix} \qquad \text{and} \qquad V_C:=\begin{pmatrix}
			I_{d\times d} & 0\\ C & I_{d\times d}
		\end{pmatrix}.
	\end{equation}
	The matrix $J$ and those in the form $V_C$ ($C$ symmetric) and $\cD_E$ ($E$ invertible) generate the group $Sp(d,\bR)$.\\ 
	
	Despite being non-symplectic, the following matrix will appear ubiquitously throughout this work in relation to symplectic matrices:
\begin{equation}\label{defL}
	L=\begin{pmatrix}
		0_{d\times d} & I_{d\times d}\\
		I_{d\times d} & 0_{d\times d}
	\end{pmatrix}.
\end{equation}
	

The \emph{metaplectic group} $Mp(d,\bR)$ is a realization of the two-fold cover of $Sp(d,\bR)$ and the projection \begin{equation}\label{piMp}
	\pi^{Mp}:Mp(d,\bR)\to Sp(d,\bR)
\end{equation} is a group homomorphism with kernel $\ker(\pi^{Mp})=\{-id_{{L^2}},id_{{L^2}}\}$.

Throughout this work, if $\hat S\in Mp(d,\bR)$, the matrix $S$ (without the caret) will always be the unique symplectic matrix such that $\pi^{Mp}(\hat S)=S$.

\begin{proposition}{\cite[Proposition 4.27]{folland89}}\label{Folland427}
	Every operator $\hat S\in Mp(d,\bR)$ maps $\cS(\rd)$ isomorphically to $\cS(\rd)$ and it extends to an isomorphism on $\cS'(\rd)$.
\end{proposition}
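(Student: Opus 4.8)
The plan is to reduce the statement to the three families of generators of $Sp(d,\bR)$ recalled above. Since $J$, the shears $V_C$ ($C$ symmetric) and the dilations $\cD_E$ ($E$ invertible) generate $Sp(d,\bR)$, and since $\pi^{Mp}$ is a group homomorphism with two-element kernel $\{\pm\mathrm{id}_{L^2}\}$, a choice of preimages of these matrices together with $-\mathrm{id}_{L^2}$ generates $Mp(d,\bR)$. Hence every $\hat S\in Mp(d,\bR)$ is a finite composition of metaplectic operators lying over $J$, over the $V_C$ and over the $\cD_E$ (and possibly $-\mathrm{id}_{L^2}$). As both ``isomorphism of $\cS(\rd)$'' and ``isomorphism of $\cS'(\rd)$'' are stable under composition, it suffices to verify the claim for each generator.

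First I would identify the metaplectic operators over the generators explicitly: over $J$ one finds, up to a unimodular constant, the Fourier transform $\cF$; over $\cD_E$ a normalized linear change of variables $f\mapsto c\,f(E'\cdot)$ with $E'\in GL(d,\bR)$ and $c\in\bC\setminus\{0\}$; and over $V_C$ multiplication by the chirp $e^{i\pi Cx\cdot x}$. Each of these is a classical isomorphism of $\cS(\rd)$ onto itself: $\cF$ is the standard Schwartz isomorphism; an invertible linear substitution maps $\cS(\rd)$ bijectively onto itself with continuous inverse; and multiplication by a unimodular smooth function whose derivatives grow at most polynomially (here the chirp, whose reciprocal is its complex conjugate and obeys the same bounds) preserves $\cS(\rd)$ together with its inverse. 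Composing, $\hat S$ is an isomorphism of $\cS(\rd)$, and in particular its inverse map $(\hat S)^{-1}\colon\cS(\rd)\to\cS(\rd)$ is again continuous.

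For the extension to $\cS'(\rd)$ I would argue by duality. By unitarity on $L^2(\rd)$ one has $(\hat S)^{\ast}=(\hat S)^{-1}$, which lies over $S^{-1}\in Sp(d,\bR)$ via \eqref{invSymp} and therefore, by the previous paragraph, also maps $\cS(\rd)$ isomorphically onto itself. One then defines the extension on $u\in\cS'(\rd)$ by $\la \hat S u,\varphi\ra:=\la u,(\hat S)^{\ast}\varphi\ra$ for $\varphi\in\cS(\rd)$; this functional is continuous, agrees with the $L^2$-action on $L^2(\rd)\subset\cS'(\rd)$, and its two-sided inverse is the analogous extension of $(\hat S)^{-1}$. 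Hence $\hat S$ is an isomorphism of $\cS'(\rd)$ as well.

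The argument carries no deep obstacle: it is structural, resting on the generation of $Sp(d,\bR)$ by $J$, $V_C$, $\cD_E$ and on the elementary stability of $\cS(\rd)$ under $\cF$, linear substitutions, and chirp multiplication. The only points requiring care are the explicit identification of the operators over the generators --- where the precise phase and normalization constants are irrelevant to the isomorphism property and may be discarded --- and the check that the duality-defined extension to $\cS'(\rd)$ is bijective, which is immediate once the construction is applied simultaneously to $\hat S$ and to $(\hat S)^{-1}$.
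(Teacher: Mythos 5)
Your proof is correct. Note that the paper itself offers no proof of this statement: it is quoted verbatim from Folland's book (Proposition 4.27 of \cite{folland89}), so there is no internal argument to compare against. Your generator-based route is the standard one and is essentially self-contained within the paper's own toolkit: the generation of $Sp(d,\bR)$ by $J$, $V_C$, $\cD_E$ is stated in Subsection \ref{subsec:26}, the explicit lifts (Fourier transform, chirp multiplications, normalized linear substitutions) appear in Example \ref{es22}, and the passage to $\cS'(\rd)$ by conjugate-linear duality, using unitarity and the fact that $(\hat S)^{-1}$ is again metaplectic, is exactly the expected argument; the only delicate point, which you handle correctly, is adjoining $-\mathrm{id}_{L^2}$ so that lifts of the generators really do generate all of $Mp(d,\bR)$.
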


Let $C\in\R^{d\times d}$ be symmetric and define 
\begin{equation}\label{chirp}
	\Phi_C(t)=e^{\pi i t\cdot Ct},\quad t\in\rd.
\end{equation}
If we add the assumption $C$ invertible, then its Fourier transform is given by
\begin{equation}\label{ft-chirp}
\widehat{\Phi_C}=c|\det(C)|^{-1/2}\,\Phi_{-C^{-1}},
\end{equation}
with $c$ phase factor.

\begin{example}\label{es22} The relationship up-to-a-sign between a metaplectic operator $\hat S\in Mp(d,\bR)$ and its projection $S$ is known for certain metaplectic operators $\hat S$. In fact, let $J$, $\cD_E$ and $V_C$ be defined as in (\ref{defJ}) and (\ref{defDLVC}), respectively. Then,
	\begin{enumerate}
		\item[\it (i)] The Fourier transform is a metaplectic operator and $\pi^{Mp}(\cF)=J$;
		\item[\it (ii)] Unitary linear change of variables are metaplectic operators: $$\mathfrak{T}_Ef:=|\det(E)|^{1/2}\,f(E\cdot)$$ and $\pi^{Mp}(\mathfrak{T}_E)=\cD_E$;
		\item[\it (iii)] The projection of the mapping $f\mapsto\Phi_C f$ is $V_C$;
		\item[\it (iv)] The projection of the operator $\psi_C =\cF \Phi_{-C} \cF^{-1}$ is $\pi^{Mp}(\psi_C)=V_C^T$;
		\item[\it (v)] If $\cF_2 $ is the Fourier transform with respect to the second variables, then $\pi^{Mp}(\cF_2)=\cA_{FT2}$, where $\cA_{FT2}\in Sp(2d,\bR)$ is the $4d\times4d$ matrix with block decomposition
		\begin{equation}\label{AFT2}
		\cA_{FT2}:=\begin{pmatrix}
			I_{d\times d} & 0_{d\times d} & 0_{d\times d} & 0_{d\times d}\\
			0_{d\times d} & 0_{d\times d} & 0_{d\times d} & I_{d\times d} \\
			0_{d\times d} & 0_{d\times d} & I_{d\times d} & 0_{d\times d}\\
			0_{d\times d} & -I_{d\times d} & 0_{d\times d} & 0_{d\times d}
		\end{pmatrix}.
		\end{equation}
	\end{enumerate}

\end{example}

\subsection{Metaplectic Wigner distributions}\label{2.4}
Let $\hat\cA\in Mp(2d,\bR)$ and $\cA=\pi^{Mp}(\hat\cA)$. The \textit{metaplectic Wigner distribution} associated to $\hat\cA$ is defined as
\[
	W_\cA(f,g)=\hat\cA(f\otimes\bar g),\quad f,g\in L^2(\rd).
\]
On the left hand-side we omit the caret, by abuse of notation. We set $W_\cA f=W_\cA(f,f)$. Note that $\cA\in Sp(2d,\bR)$ means that $\cA$ is a $4d\times 4d$ symplectic matrix, for which we assume a block decomposition of the type \eqref{blockDecA}.

We recall the following continuity properties.
\begin{proposition}\label{prop25} If $X(\rd)$ denotes one of the following spaces: $L^2(\rd), \cS(\rd), \cS'(\rd)$, and  $W_\cA$ is a metaplectic Wigner distribution, then 
	 $W_\cA:X(\rd)\times X(\rd)\to X(\rdd)$ is bounded.
\end{proposition}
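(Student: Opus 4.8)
The plan is to exploit the very definition $W_\cA(f,g)=\hat\cA(f\otimes\bar g)$ and factor the (sesquilinear) map $W_\cA$ as
\[
	W_\cA=\hat\cA\circ T,\qquad T(f,g):=f\otimes\bar g,
\]
where $\hat\cA\in Mp(2d,\bR)$ is a fixed metaplectic operator acting on functions and distributions on $\rdd$. Since a composition of bounded maps is bounded, it suffices to prove separately that (a) the map $T:X(\rd)\times X(\rd)\to X(\rdd)$ is bounded, and (b) the linear operator $\hat\cA:X(\rdd)\to X(\rdd)$ is bounded, for each of $X=L^2,\cS,\cS'$. Throughout, the conjugation $g\mapsto\bar g$ is an (antilinear) isometry of $L^2(\rd)$ and a topological (antilinear) automorphism of $\cS(\rd)$ and of $\cS'(\rd)$, so it is harmless and I would suppress it in the estimates.

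For step (b) there is essentially nothing to do, as the boundedness of $\hat\cA$ is already recorded in the excerpt. On $L^2(\rdd)$ every metaplectic operator is unitary, hence bounded of norm one. On $\cS(\rdd)$ and $\cS'(\rdd)$, Proposition \ref{Folland427} states that $\hat\cA$ is an isomorphism of $\cS(\rdd)$ onto itself which extends to an isomorphism of $\cS'(\rdd)$; in particular $\hat\cA:X(\rdd)\to X(\rdd)$ is continuous in each of the three settings.

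For step (a) I would treat the three topologies in turn. In the $L^2$ case the tensor identity $\|f\otimes\bar g\|_{L^2(\rdd)}=\|f\|_{L^2(\rd)}\,\|g\|_{L^2(\rd)}$, immediate from Fubini, gives boundedness of $T$ (with norm one) at once. In the Schwartz case I would use that the Schwartz seminorms of an elementary tensor factorize: for all multi-indices the quantity $\sup_{(x,y)}|x^\alpha y^\beta\partial_x^\gamma\partial_y^\delta(f(x)\bar g(y))|$ equals the product $\sup_x|x^\alpha\partial_x^\gamma f(x)|\cdot\sup_y|y^\beta\partial_y^\delta \bar g(y)|$, whence $T$ maps $\cS(\rd)\times\cS(\rd)$ continuously into $\cS(\rdd)$; that the range lands in the correct space is guaranteed by the density statement for elementary tensors recalled in the Preliminaries. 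In the $\cS'$ case, the tempered distribution $f\otimes\bar g$ was defined in the Preliminaries through its action $\la f\otimes\bar g,\varphi\otimes\psi\ra=\la f,\varphi\ra\,\la \bar g,\psi\ra$ on the dense subspace $\mathrm{span}\{\varphi\otimes\psi\}$ of $\cS(\rdd)$; the boundedness of $T$ then follows by transposition from the continuity of $T$ on $\cS$ already established, since a bounded family in $\cS'(\rd)\times\cS'(\rd)$ is sent to an equicontinuous, hence bounded, family of functionals on $\cS(\rdd)$.

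The only genuinely delicate point is this last one: making precise the sense in which $T$ is \emph{bounded} on $\cS'$, since $\cS'$ is not a Banach space. The cleanest route is to read ``bounded'' as continuity in the appropriate locally convex topologies and to deduce the $\cS'$ estimate from the $\cS$ estimate by duality, again using the density of elementary tensors from the Preliminaries; the factor $\hat\cA$ then preserves this continuity by Proposition \ref{Folland427}. Everything else reduces to the two elementary facts that a tensor product factorizes norms and seminorms and that metaplectic operators are isomorphisms on all three spaces.
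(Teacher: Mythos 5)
The paper offers no proof of Proposition \ref{prop25}: it is ``recalled'', with its two ingredients placed nearby, namely unitarity of metaplectic operators on $L^2(\rdd)$ (invoked right after, for \eqref{Moyal}) and Proposition \ref{Folland427}. Your factorization $W_\cA=\hat\cA\circ T$, $T(f,g)=f\otimes\bar g$, is exactly the argument this recollection presupposes, and your $L^2$ and $\cS$ steps are complete: the identity $\norm{f\otimes\bar g}_{L^2(\rdd)}=\norm{f}_{L^2}\norm{g}_{L^2}$ and the factorization of the Schwartz seminorms give continuity of $T$, while $\hat\cA$ is unitary on $L^2(\rdd)$ and an isomorphism of $\cS(\rdd)$, respectively of $\cS'(\rdd)$, by Proposition \ref{Folland427}.

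One caveat on the $\cS'$ step, which you rightly single out as the delicate one: density of $\mathrm{span}\{\varphi\otimes\psi\}$ in $\cS(\rdd)$ together with the uniform bound $|\la f\otimes\bar g,\varphi\otimes\psi\ra|\le C\,p(\varphi)\,q(\psi)$ on elementary tensors does not by itself give equicontinuity of the family $\{f\otimes\bar g\}$ on $\cS(\rdd)$. A finite sum $\Phi=\sum_k\varphi_k\otimes\psi_k$ may have small $\cS(\rdd)$-seminorms while $\sum_k p(\varphi_k)\,q(\psi_k)$ is large, so the triangle-inequality bound on the span is not controlled by any seminorm of $\Phi$, and density cannot be applied yet. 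To close the argument one needs either (a) the Fubini-type iterated-action formula defining $\la f\otimes\bar g,\Phi\ra$ through the successive action of $g$ and $f$ on $\Phi$, whose mapping properties yield the uniform bound on all of $\cS(\rdd)$ directly, or (b) the nuclearity fact $\cS(\rdd)\cong\cS(\rd)\,\widehat{\otimes}\,\cS(\rd)$, which identifies the topology induced by $\cS(\rdd)$ on the span of elementary tensors with the projective one, so that the infimum of $\sum_k p(\varphi_k)\,q(\psi_k)$ over all representations of $\Phi$ is dominated by an $\cS(\rdd)$-seminorm. This is the same black box the paper itself uses silently when asserting in the Preliminaries that $f\otimes g$ ``extends uniquely'' to an element of $\cS'(\rdd)$; granting it, your proof is correct, but the sentence ``follows by transposition\dots using the density of elementary tensors'' should be replaced by (a) or (b) to be a complete justification.
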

Since metaplectic operators are unitary, for all $f_1,f_2,g_1,g_2\in L^2(\rd)$,
\begin{equation}\label{Moyal}
	\la W_\cA(f_1,f_2),W_\cA(g_1,g_2)\ra = \la f_1,g_1\ra \overline{\la f_2,g_2\ra}.
\end{equation}

\begin{proposition}\label{prop-conn}
	The  Cohen distribution  $\mathcal{B}_{\mathbb{A}_M}$ defined in \eqref{Bamtheta},  having the Cohen kernel $\theta_M$ in \eqref{thetaM}, is a metaplectic Wigner distribution in the Cohen class of the type \eqref{covMWDE}, with symmetric matrix $\mathcal{M}_\cA$ in \eqref{MA}.
\end{proposition}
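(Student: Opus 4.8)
The plan is to deduce both assertions from the symplectic factorization of $\mathcal{B}_{\mathbb{A}}$ already recorded in the Introduction, thereby reducing the statement to a finite linear-algebra computation with $4d\times 4d$ symplectic matrices. First I would use $\mathcal{B}_{\mathbb{A}}(f,g)=|\det\mathbb{A}|^{-1/2}\cF_2\mathfrak{T}_{\mathbb{A}}(f\otimes\bar g)$ with $\mathbb{A}=\mathbb{A}_M$; since $|\det\mathbb{A}_M|=1$ (indeed $\det\mathbb{A}_M=(-1)^d$), the normalizing constant is $1$ and $\mathcal{B}_{\mathbb{A}_M}(f,g)=\cF_2\mathfrak{T}_{\mathbb{A}_M}(f\otimes\bar g)$. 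By Example~\ref{es22}, both factors are metaplectic operators in $Mp(2d,\bR)$, with projections $\pi^{Mp}(\cF_2)=\cA_{FT2}$ from \eqref{AFT2} and $\pi^{Mp}(\mathfrak{T}_{\mathbb{A}_M})=\cD_{\mathbb{A}_M}$. Since $\pi^{Mp}$ is a homomorphism, the composition is metaplectic with projection $\cA:=\cA_{FT2}\cD_{\mathbb{A}_M}\in Sp(2d,\bR)$, so that $\mathcal{B}_{\mathbb{A}_M}(f,g)=\hat\cA(f\otimes\bar g)=W_\cA(f,g)$. This already proves that $\mathcal{B}_{\mathbb{A}_M}$ is a metaplectic Wigner distribution.

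Next I would compute $\cA$ in block form. Writing $\cD_{\mathbb{A}_M}=\diag(\mathbb{A}_M^{-1},\mathbb{A}_M^T)$, the inverse follows from $2\times2$ block inversion (all blocks are polynomials in $M$, hence commute), giving $\mathbb{A}_M^{-1}=\begin{pmatrix}\tfrac12 I_{d\times d}-M & M+\tfrac12 I_{d\times d}\\ I_{d\times d} & -I_{d\times d}\end{pmatrix}$. Multiplying $\cA_{FT2}$ by $\cD_{\mathbb{A}_M}$ and comparing the result with the covariant normal form \eqref{A-covariant}, I expect to read off $A_{11}=\tfrac12 I_{d\times d}-M$, $A_{13}=0_{d\times d}$ and $A_{21}=0_{d\times d}$, the symmetry constraints $A_{13}^T=A_{13}$ and $A_{21}^T=A_{21}$ being trivially satisfied. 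In particular $\cA$ is of the Cohen (covariant) type, which shows that $\mathcal{B}_{\mathbb{A}_M}$ belongs to the Cohen class in the form \eqref{covMWDE}.

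It then remains only to insert these blocks into \eqref{defBA}, which yields $\mathcal{M}_\cA=\begin{pmatrix}A_{13} & \tfrac12 I_{d\times d}-A_{11}\\ \tfrac12 I_{d\times d}-A_{11}^T & -A_{21}\end{pmatrix}=\begin{pmatrix}0_{d\times d} & M\\ M^T & 0_{d\times d}\end{pmatrix}$, exactly \eqref{MA}. As an independent check, valid also when $M$ is singular (so that the explicit formula \eqref{thM} does not apply), one can verify directly that $\theta_M=\cF^{-1}(e^{-i\pi\mathcal{M}_\cA\zeta\cdot\zeta})$: writing $\cF_\sigma F=(\cF F)\circ J$ and using that $\chi_M$ is even (so $\cF^2\chi_M=\chi_M$), one gets $\cF\theta_M(x,\omega)=\chi_M(\omega,-x)=e^{-2\pi i x\cdot M\omega}$, while for $\mathcal{M}_\cA$ as in \eqref{MA} one has $\mathcal{M}_\cA\zeta\cdot\zeta=2\,x\cdot M\omega$ at $\zeta=(x,\omega)$, whence $e^{-i\pi\mathcal{M}_\cA\zeta\cdot\zeta}=e^{-2\pi i x\cdot M\omega}$; the two distributions coincide.

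The only real work, and the step where sign or transposition errors would creep in, is the linear algebra of the second paragraph: inverting the block matrix $\mathbb{A}_M$ and carrying out the $4\times4$ block product $\cA_{FT2}\cD_{\mathbb{A}_M}$ without permuting or transposing blocks incorrectly, so that the identification with \eqref{A-covariant} is exact. The genuinely delicate conceptual point is the singular case of $M$ in the alternative Fourier verification, where $\theta_M=\cF_\sigma\chi_M$ and the claimed identity must be read in $\cS'(\rdd)$ rather than pointwise; this is precisely what the even-ness of $\chi_M$ and the symplectic-Fourier identity $\cF_\sigma=(\cF\,\cdot\,)\circ J$ let one bypass, since neither requires $M$ to be invertible.
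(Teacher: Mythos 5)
Your proof is correct, and your primary argument takes a genuinely different route from the paper's. The paper's entire proof is, in essence, your final ``independent check'': using \eqref{SyFT} in the form $\cF_\sigma F(x,\omega)=\cF^{-1}F(-\omega,x)$, it verifies in one line that $\theta_M=\cF_\sigma\chi_M=\cF^{-1}(e^{-i\pi\cM_\cA\zeta\cdot\zeta})$ with $\cM_\cA$ as in \eqref{MA}, and concludes via \eqref{Bamtheta} and the characterization \eqref{covMWDE}; your variant of that computation (applying $\cF$, using invariance under $J$ and the evenness of $\chi_M$) is the same argument up to cosmetics and, as you note, is valid in $\cS'(\rdd)$ also for singular $M$. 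Your main route instead works at the level of symplectic matrices: factor $\hat\cA=\cF_2\,\mathfrak{T}_{\mathbb{A}_M}$, compute $\cA=\cA_{FT2}\,\cD_{\mathbb{A}_M}$ in $d\times d$ blocks, match the result against the covariant normal form \eqref{A-covariant}, and read $\cM_\cA$ off \eqref{defBA}. I checked the linear algebra you only sketched (``expect to read off''): your formula for $\mathbb{A}_M^{-1}$ is correct, the product $\cA_{FT2}\,\cD_{\mathbb{A}_M}$ is merely a signed row permutation of $\cD_{\mathbb{A}_M}$, and one indeed obtains $A_{11}=\tfrac{1}{2}I_{d\times d}-M$, $A_{13}=A_{21}=0_{d\times d}$, hence \eqref{MA}; so that step goes through exactly as you predicted. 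The trade-off: the paper's route is two lines but rests on a distributional Fourier identity, while your route is longer but produces the explicit symplectic matrix $\cA$ attached to $\mathcal{B}_{\mathbb{A}_M}$ (which the paper never writes down, and which plugs directly into later results such as Theorem \ref{thmFond} and the shift-invertibility discussion, where $A_{13}=A_{21}=0_{d\times d}$ and $A_{11}=\tfrac{1}{2}I_{d\times d}-M$ immediately recover the condition that $M\pm\tfrac{1}{2}I_{d\times d}$ be invertible). One point you should make explicit if you rely on your matrix argument alone: to identify the Cohen kernel coming from \eqref{covMWDE} with $\theta_M$ from \eqref{Bamtheta}, i.e.\ to obtain \eqref{connection}, you need uniqueness of the Cohen kernel; your final paragraph settles this identification directly, so nothing is actually missing. (You are also right that $\det\mathbb{A}_M=(-1)^d$ rather than $1$; only $|\det\mathbb{A}_M|=1$ is ever used.)
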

\begin{proof}
The connection between the  Fourier transform $\cF$ and the symplectic Fourier transform in $\cF_\sigma$ in \eqref{SyFT} gives in particular that $\cF_\sigma F\phas=\cF^{-1} F(-\xi,x)$, for any $F\in L^2(\rdd)$. For $\zeta=\phas$, it is then straightforward to check that  $\cF_\sigma\chi_M =\cF^{-1}(e^{-i\pi \mathcal{M}_A\zeta\cdot\zeta})$, and the thesis follows.
\end{proof}

\subsection{Metaplectic pseudodifferential operators}
In what follows we recall the definition of the pseudodifferential operators associated with the metaplectic Wigner distribution $W_\cA$.
\begin{definition}\label{defMetaplPsiDo}
		Consider  $a\in\mathcal{S}'(\mathbb{R}^{2d})$. The \textit{metaplectic pseudodifferential operator} with \textit{symbol} $a$ and symplectic matrix $\cA\in Sp(2d,\bR)$ is the operator $Op_\mathcal{A}(a):\mathcal{S}(\mathbb{R}^d)\to\mathcal{S}'(\mathbb{R}^{d})$ defined by
		\[		\langle Op_\mathcal{A}(a)f,g\rangle=\langle a,W_\mathcal{A}({g},f)\rangle, \quad f,g\in\mathcal{S}(\mathbb{R}^d).	\]
	\end{definition}
This operator is well defined by Proposition \ref{prop25}. The metaplectic Wigner distribution $W_\cA$ plays the role of a \emph{quantization}. In particular, choosing $\cA$ properly, we recapture the most well-know quantizations. For $\cA=\cA_\tau$ in \eqref{Atau},  we recover the $\tau$-operators, cf.  \cite{CR2021}: for $f\in\cS(\rd)$,
\begin{equation}\label{optau}
	Op_\tau(a)f(x):= Op_{\cA_\tau}(a)f(x)=\intrdd e^{2\pi i(x-y)\xi}a((1-\tau)x+\tau y,\xi)f(y)dyd\xi.
\end{equation}
For $\tau=1/2$ we obtain the Weyl operator $Op_w(a)$
\begin{equation}\label{weyl}
	Op_w(a)f(x)=Op_{1/2}(a)f(x)=\intrdd e^{2\pi i(x-y)\xi}a\left(\frac{x+y}2,\xi\right)f(y)dyd\xi,
\end{equation}
whereas $\tau=0$ gives the Kohn-Nirenberg  operator
\begin{equation}\label{optau0}
	Op_0(a)f(x)=\intrdd e^{2\pi i(x-y)\xi}a(x,\xi)f(y)dyd\xi.
\end{equation}
If $\cA=I_{4d\times4d}$, the $4d\times4d$ identity matrix, then $W_\cA(f,g)=f\otimes\bar{g}$ and $Op_\cA(a)$ is the operator with Schwartz kernel $a$, see for example \cite[Section 14.4]{book}. 
\section{Boundedness of shift-invertible $W_\cA$ and related $Op_\cA$ on $L^p$ spaces}\label{sec:BSI}
The modern methods of the time-frequency analysis involve a large use of function spaces. In particular, besides Schwartz functions and distributions, the classical Lebesgue spaces $L^p$, $1\leq p\leq\infty$, give a precise framework to several formal results. Another relevant setting is provided by the modulation spaces of Feichtinger \cite{feichtinger-modulation}. When considering boundedness properties, a relevant sub-class of the metaplectic Wigner distributions is identified by the so-called shift-invertibility property, cf. \cite{CR2022}, granting good continuity results. Though our main results, Theorem \ref{thmFond} and subsequent corollaries, do not involve such function spaces, it will be natural to consider spectrograms in a functional frame, see Remark \ref{rem6.2} at the end of the paper. As a possible preparation to a precise study of spectrograms in this perspective, in the present section we analyze the boundedness properties of $W_\cA$ and the related metaplectic operator $Op_\cA(a)$ on $L^p(\rd)$ spaces, generalizing the results for $\tau$-Wigner distributions, shown in \cite{BdDO2}.
We use the following issues, cf. \cite[Proposition 2.1]{BdDO1} and \cite[Proposition 6.3]{BdDO2}.
\begin{proposition}\label{BdDO11}
	Let $E,E_1,E_2$ be Banach spaces and $E_2$ be reflexive. \\
	(i) Let $\varphi:E_2^\ast\times E_1\to E^\ast$ be a sesquilinear bounded mapping. Then, there exists a linear bounded mapping $E\ni a \mapsto T_a\in BL(E_1,E_2)$ such that
	\begin{equation}\label{BdDO-eq1}
		(v,T_au)=(\varphi_{v,u},a) \qquad \forall v\in E_2^\ast.
	\end{equation}
	(ii) Let $E\ni a\mapsto T_a\in BL(E_1,E_2)$ be a linear bounded mapping. Then, (\ref{BdDO-eq1}) defines a sesquilinear bounded mapping $\varphi:E_2^\ast\times E_1\to E^\ast$.
\end{proposition}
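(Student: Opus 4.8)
The plan is to read \eqref{BdDO-eq1} as the defining relation of a Riesz-type correspondence between bounded sesquilinear forms and bounded operators, and to let the reflexivity of $E_2$ carry the essential weight. For part (i), fix $a\in E$ and $u\in E_1$ and introduce the functional
\[
	\ell_{a,u}\colon E_2^\ast\to\bC,\qquad \ell_{a,u}(v)=(\varphi_{v,u},a).
\]
Since $\varphi$ is sesquilinear and the pairing $(\cdot,\cdot)$ on $E_2^\ast\times E_2$ is set up accordingly, $\ell_{a,u}$ is a (conjugate-)linear functional of $v$, and it is bounded because
\[
	|\ell_{a,u}(v)|\le \|\varphi_{v,u}\|_{E^\ast}\,\|a\|_E\le \|\varphi\|\,\|v\|_{E_2^\ast}\,\|u\|_{E_1}\,\|a\|_E .
\]
Hence $\ell_{a,u}\in E_2^{\ast\ast}$ with $\|\ell_{a,u}\|\le \|\varphi\|\,\|u\|_{E_1}\,\|a\|_E$. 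This is exactly where reflexivity enters: the canonical embedding $E_2\to E_2^{\ast\ast}$ is an isometric isomorphism, so there is a unique $T_au\in E_2$ with $(v,T_au)=\ell_{a,u}(v)$ for every $v\in E_2^\ast$, which is precisely \eqref{BdDO-eq1}, and $\|T_au\|_{E_2}\le\|\varphi\|\,\|u\|_{E_1}\,\|a\|_E$.

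It then remains to promote this pointwise construction to a bounded linear map $a\mapsto T_a$. Linearity of $u\mapsto T_au$ and of $a\mapsto T_a$ follows from the (sesqui)linearity of $\varphi$ in its second argument and the linearity of $(\varphi_{v,u},\cdot)$ in $a$, combined with the fact that $E_2^\ast$ separates the points of $E_2$: testing an identity such as $(v,T_a(\lambda u_1+u_2))=(v,\lambda T_au_1+T_au_2)$ against all $v\in E_2^\ast$ forces equality in $E_2$, and likewise in the variable $a$. The norm bound above gives $\|T_a\|_{BL(E_1,E_2)}\le\|\varphi\|\,\|a\|_E$, so $a\mapsto T_a$ is linear and bounded.

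Part (ii) is the routine converse, requiring no reflexivity. Fix $v\in E_2^\ast$ and $u\in E_1$ and define $\varphi_{v,u}$ by $(\varphi_{v,u},a):=(v,T_au)$. Since $a\mapsto T_a$ is linear and bounded into $BL(E_1,E_2)$, the map $a\mapsto(v,T_au)$ is linear in $a$ and satisfies $|(v,T_au)|\le\|v\|_{E_2^\ast}\,\|T_a\|\,\|u\|_{E_1}\lesssim\|v\|_{E_2^\ast}\,\|u\|_{E_1}\,\|a\|_E$, so it defines an element $\varphi_{v,u}\in E^\ast$. Sesquilinearity of $(v,u)\mapsto\varphi_{v,u}$ and the bound on $\|\varphi\|$ by the operator norm of $a\mapsto T_a$ are then immediate from the corresponding properties of $(v,T_au)$.

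The only genuinely delicate point is part (i), and within it the single place where reflexivity is indispensable: passing from the bounded functional $\ell_{a,u}$ on $E_2^\ast$ back to an honest vector $T_au\in E_2$, rather than merely an element of $E_2^{\ast\ast}$. Everything else is bookkeeping; the main care is to keep the conjugate-linear slot of the sesquilinear form $\varphi$ aligned with the convention chosen for the pairing $(\cdot,\cdot)$ on $E_2^\ast\times E_2$, so that the resulting $T_a$ is genuinely \emph{linear} and belongs to $BL(E_1,E_2)$.
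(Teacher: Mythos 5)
Your proof is correct. Note that the paper itself does not prove this proposition at all — it is quoted from the earlier works of Boggiatto, De Donno and Oliaro (\cite[Proposition 2.1]{BdDO1} and \cite[Proposition 6.3]{BdDO2}) — and your argument is precisely the standard one underlying those references: represent the bounded (conjugate-)linear functional $v\mapsto(\varphi_{v,u},a)$ on $E_2^\ast$ as an element of $E_2^{\ast\ast}$, use reflexivity to pull it back to a vector $T_au\in E_2$, and obtain linearity and the norm bound from the separation of points by $E_2^\ast$; the converse direction is, as you say, routine and needs no reflexivity.
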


Let $\cA\in Sp(2d,\bR)$. Applying Proposition \ref{BdDO11} with $T_a=Op_\cA(a)$ and $\varphi=W_\cA$, we obtain:
\begin{proposition}\label{CGR-prop1}
	Let $E,E_1,E_2$ be Banach spaces and $E_2$ be reflexive, let $W_\cA$ be the metaplectic Wigner distribution associated to $\hat\cA\in Mp(2d,\bR)$ and $\pi^{Mp}(\hat\cA)=\cA\in Sp(2d,\bR)$ be the projection of $\hat\cA$ onto $Sp(2d,\bR)$. The following statements are equivalent:\\
	(i) $ E\ni a\mapsto Op_\cA(a)\in BL(E_1,E_2)$ is a bounded mapping.\\
	(ii) $W_\cA:E_2^\ast\times E_1\mapsto E^\ast$ is continuous.
\end{proposition}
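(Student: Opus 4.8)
The plan is to recognize the statement as a direct instance of the abstract correspondence in Proposition \ref{BdDO11}, with the sesquilinear map $\varphi$ taken to be $W_\cA$ and the operator family $T_a$ taken to be $Op_\cA(a)$. First I would fix the dictionary between the two settings: setting $u=f\in E_1$ and $v=g\in E_2^\ast$, the defining relation of the metaplectic pseudodifferential operator,
\[
	\langle Op_\cA(a)f,g\rangle=\langle a,W_\cA(g,f)\rangle,
\]
is exactly equation \eqref{BdDO-eq1} once one reads $\varphi_{v,u}=W_\cA(g,f)\in E^\ast$ and $a\in E$, the left-hand pairing being the $E_2$--$E_2^\ast$ duality and the right-hand one the $E$--$E^\ast$ duality. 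The sesquilinearity required of $\varphi$ holds because $W_\cA(g,f)=\hat\cA(g\otimes\bar f)$ is linear in $g$ and conjugate-linear in $f$, matching the convention fixed for the brackets $\langle\cdot,\cdot\rangle$.

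For the implication (i) $\Rightarrow$ (ii) I would apply part (ii) of Proposition \ref{BdDO11}: boundedness of $a\mapsto Op_\cA(a)$ from $E$ into $BL(E_1,E_2)$ forces, through \eqref{BdDO-eq1}, the map $\varphi=W_\cA:E_2^\ast\times E_1\to E^\ast$ to be a bounded sesquilinear mapping, which is precisely the continuity asserted in (ii).

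For the converse (ii) $\Rightarrow$ (i) I would invoke part (i) of Proposition \ref{BdDO11}: continuity, hence boundedness, of the sesquilinear map $W_\cA:E_2^\ast\times E_1\to E^\ast$ produces a bounded linear family $a\mapsto T_a\in BL(E_1,E_2)$ obeying \eqref{BdDO-eq1}. Here the reflexivity of $E_2$ is the essential hypothesis: for fixed $a$ and $u$, the functional $v\mapsto(\varphi_{v,u},a)$ is bounded and linear on $E_2^\ast$, and reflexivity is exactly what lets one represent it by an element $T_au\in E_2=(E_2^\ast)^\ast$. Finally I would observe that \eqref{BdDO-eq1} determines the operator $T_a$ uniquely, so that $T_a=Op_\cA(a)$ (the latter being well defined by Proposition \ref{prop25}); hence $a\mapsto Op_\cA(a)$ is bounded.

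The only genuinely delicate point is bookkeeping rather than analysis: one must verify that the variable ordering and the linear/conjugate-linear slots in $(v,T_au)=(\varphi_{v,u},a)$ agree with those in the definition of $Op_\cA$, so that the abstractly produced $T_a$ is really $Op_\cA(a)$ and not, say, its adjoint or a conjugate. Once this identification is pinned down and the role of reflexivity of $E_2$ is isolated in the backward direction, both implications follow immediately from Proposition \ref{BdDO11}.
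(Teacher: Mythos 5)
Your proposal is correct and coincides with the paper's own argument: the paper proves Proposition \ref{CGR-prop1} precisely by applying Proposition \ref{BdDO11} with $T_a=Op_\cA(a)$ and $\varphi=W_\cA$, exactly as you do. Your additional bookkeeping (matching the conjugate-linear slots via $\langle Op_\cA(a)f,g\rangle=\langle a,W_\cA(g,f)\rangle$ and isolating where reflexivity of $E_2$ enters) only makes explicit what the paper leaves implicit.
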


We also need the following estimates for the short-time Fourier transform, let us refer again to \cite{BdDO1,BdDO2}.

\begin{proposition}\label{BdDO21}
	The STFT $V:(f,g)\in L^{p'}(\rd)\times L^p(\rd)\mapsto V_gf\in L^q(\rdd)$ is bounded if and only if $q\geq2$ and $q'\leq p\leq q$, with
	\[
		\norm{V_gf}_q\leq \norm{f}_{p'}\norm{g}_p.
	\]
\end{proposition}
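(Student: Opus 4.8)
The plan is to treat the two implications separately: the sufficiency rests on a Hausdorff--Young estimate in the frequency variable followed by Young's convolution inequality in the time variable, and the necessity on a scaling-free superposition of well-separated test functions together with the flip symmetry of the STFT.

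For sufficiency, fix $x\in\rd$ and observe that $\xi\mapsto V_gf(x,\xi)$ is the Fourier transform (in $t$) of the product $f\cdot\overline{T_xg}$, where $T_xg(t)=g(t-x)$. Since $q\geq2$, i.e. $q'\leq2\leq q$, the Hausdorff--Young inequality gives
\[
\norm{V_gf(x,\cdot)}_{L^q(\rd)}\leq\norm{f\cdot\overline{T_xg}}_{L^{q'}(\rd)}=\Big(\int_{\rd}\abs{f(t)}^{q'}\abs{g(t-x)}^{q'}\,dt\Big)^{1/q'}.
\]
Raising to the power $q$ and integrating in $x$, I recognize the inner integral as the correlation $(F\star G)(x)$ with $F=\abs{f}^{q'}$ and $G=\abs{g}^{q'}$, so that
\[
\norm{V_gf}_{L^q(\rdd)}^q\leq\norm{F\star G}_{L^{q/q'}(\rd)}^{q/q'}.
\]
I then apply Young's convolution inequality with exponents $a=p'/q'$, $b=p/q'$ and target $r=q/q'$. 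A direct check gives $1/a+1/b=q'=1+1/r$, so Young's hypothesis holds, while the requirements $a,b,r\geq1$ translate \emph{exactly} into $q'\leq p\leq q$ (and $q\geq 2$). Since $\norm{F}_a=\norm{f}_{p'}^{q'}$ and $\norm{G}_b=\norm{g}_p^{q'}$, unwinding the powers produces the claimed estimate $\norm{V_gf}_q\leq\norm{f}_{p'}\norm{g}_p$.

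For necessity I first note that the range $q'\leq p\leq q$ already forces $q'\leq q$, hence $q\geq2$; thus it suffices to prove the two inequalities $q'\leq p$ and $p\leq q$. To obtain $q'\leq p$ (equivalently $p'\leq q$), I fix a compactly supported window $g=\phi$ and test boundedness on superpositions $f=\sum_{k=1}^N c_k\,T_{x_k}\phi$ with translation nodes $x_k$ so widely separated that both the supports of the $T_{x_k}\phi$ and, after applying $V_\phi$, the $x$-supports of the building blocks $V_\phi\phi(\cdot-x_k,\cdot)$ are pairwise disjoint. Then $\norm{f}_{p'}=(\sum_k\abs{c_k}^{p'})^{1/p'}\norm{\phi}_{p'}$ and $\norm{V_gf}_q=(\sum_k\abs{c_k}^{q})^{1/q}\norm{V_\phi\phi}_q$ hold exactly, so boundedness forces $(\sum_k\abs{c_k}^q)^{1/q}\lesssim(\sum_k\abs{c_k}^{p'})^{1/p'}$ for every finite scalar sequence; this is the embedding $\ell^{p'}\hookrightarrow\ell^q$, valid precisely when $p'\leq q$.

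The remaining inequality $p\leq q$ follows by symmetry: the fundamental identity $V_gf(x,\xi)=e^{-2\pi i x\cdot\xi}\,\overline{V_fg(-x,-\xi)}$ gives $\norm{V_gf}_q=\norm{V_fg}_q$, so boundedness of $V:L^{p'}(\rd)\times L^p(\rd)\to L^q(\rdd)$ is equivalent to that of $V:L^{p}(\rd)\times L^{p'}(\rd)\to L^q(\rdd)$; running the previous separation argument on the latter (with the roles of $p$ and $p'$ interchanged) yields $p\leq q$. The routine direction is the sufficiency; the main delicate point is the necessity, namely verifying rigorously that widely separated nodes make the above norms split as $\ell^{p'}$ and $\ell^q$ sums, which is where the choice of a compactly supported window and an explicit separation threshold does the work.
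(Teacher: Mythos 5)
Your proof is correct, but it does not follow the paper's route — in fact the paper gives no proof at all: Proposition \ref{BdDO21} is quoted from \cite[Proposition 3.2]{BdDO1} (see also \cite{BdDO2}), and the only trace of its argument inside the paper is in the proof of Proposition \ref{CGR-cor1}, where the necessity pattern is recalled. Your sufficiency step (Hausdorff--Young in $\xi$ for the product $f\,\overline{T_xg}$, then Young's convolution inequality in $x$ with exponents $a=p'/q'$, $b=p/q'$, $r=q/q'$, whose admissibility is exactly $q\geq 2$, $q'\leq p\leq q$) is essentially the classical argument of the cited source. Where you genuinely diverge is the necessity: the cited proof, and the paper when it re-runs it for shift-invertible $W_\cA$, uses the dilated Gaussians $\varphi(x)=e^{-\pi|x|^2}$, $\varphi_\lambda(x)=e^{-\pi\lambda|x|^2}$, computes $V_\varphi\varphi_\lambda$ explicitly, and lets $\lambda\to 0$ and $\lambda\to\infty$ to force the exponent constraints; you instead take a compactly supported window $\phi$ and sparse superpositions $f=\sum_k c_k T_{x_k}\phi$, exploiting that $V_\phi\phi$ has compact support in the $x$-variable and that $V_\phi(T_{x_k}\phi)(x,\xi)=e^{-2\pi i x_k\cdot\xi}V_\phi\phi(x-x_k,\xi)$, so both norms split exactly and boundedness becomes the embedding $\ell^{p'}\hookrightarrow\ell^q$, i.e. $p'\leq q$; the flip identity $V_gf(x,\xi)=e^{-2\pi i x\cdot\xi}\overline{V_fg(-x,-\xi)}$ then exchanges the roles of signal and window and yields $p\leq q$, whence also $q\geq 2$. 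Both arguments are complete. The Gaussian route buys explicit, quantitative blow-up of the ratio $\norm{V_\varphi\varphi_\lambda}_q/(\norm{\varphi_\lambda}_{p'}\norm{\varphi}_p)$, which is precisely the form the paper needs later (in Proposition \ref{CGR-cor1} the same family transfers verbatim to $W_\cA$ after composing with $E$ and $\hat S^{-1}$); your route avoids every explicit integral and isolates the structural obstructions (translation covariance plus $\ell^{p'}\hookrightarrow\ell^q$), and would transfer to the metaplectic setting just as well. Two points you should state explicitly to make the division legitimate: $\norm{V_\phi\phi}_q<\infty$ follows from the assumed boundedness itself (it is not known a priori, since $q\geq2$ has not yet been established at that stage), and $\norm{V_\phi\phi}_q>0$ follows from continuity of $V_\phi\phi$ together with $V_\phi\phi(0,0)=\norm{\phi}_2^2>0$.
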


Let us recall the original definition of shift-invertibility.

\begin{definition}
	A metaplectic Wigner distribution $W_\cA$ is \emph{shift-invertible} if
	\[
		|W_\cA(\pi(w)f,g)|=|T_{E_\cA w}W_\cA(f,g)|, \qquad f,g\in L^2(\rd), \quad w\in\rdd,
	\]
	for some $E_\cA\in GL(2d,\bR)$, with:
	\[
		T_{E_\cA w}W_\cA(f,g)(z)=W_\cA(f,g)(z-E_\cA w), \qquad z,w\in\rdd.
	\]
\end{definition}

The definition was recently rephrased in different equivalent forms. In what follows we use the characterization of shift-invertible distributions provided in \cite[Corollary 3.3]{CG-Excursus}. Namely:
\begin{theorem}\label{CG-thm1}
A metaplectic Wigner distribution $W_\cA$ is shift-invertible if and only if, for every $f\in\cS'(\rd)$, $g\in\cS(\rd)$,
\begin{equation}\label{S-inv}
	W_\cA(f,g)(z)=|\det(E)|^{1/2}\Phi_{C}(Ez)V_{\hat Sg}f(Ez), \quad  \ z\in\rdd,
\end{equation}
for some $E\in GL(2d,\bR)$, $C\in \Sym(2d,\bR)$ and $\hat S\in Mp(d,\bR)$. 
\end{theorem}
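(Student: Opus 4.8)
The plan is to prove both implications at once by exploiting the fact that $W_\cA(f,g)=\hat\cA(f\otimes\bar g)$ and $V_{\hat Sg}f$ are both metaplectic images of $f\otimes\bar g$, so that their behaviour under phase-space shifts is dictated by the \emph{symplectic covariance of time-frequency shifts}: for $\hat\cA\in Mp(2d,\bR)$ and a shift $\pi(\zeta)$ acting on $L^2(\rdd)$ (here $\zeta\in\bR^{4d}$) one has $\hat\cA\,\pi(\zeta)=c(\zeta)\,\pi(\cA\zeta)\,\hat\cA$ with $|c(\zeta)|=1$. Since $\{f\otimes\bar g:f,g\in\cS(\rd)\}$ spans a dense subspace, an identity between metaplectic operators may be read off from the corresponding identity between the distributions $W_\cA(f,g)$, and conversely.

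First I would pin down the translation behaviour of $W_\cA$. Writing $w=(w_1,w_2)\in\rdd$, the shifted tensor factorises as $(\pi(w)f)\otimes\bar g=(\pi(w)\otimes\mathrm{Id})(f\otimes\bar g)=\pi(\iota w)(f\otimes\bar g)$, where $\iota w=(w_1,0,w_2,0)\in\bR^{4d}$ embeds $w$ into the phase space of $L^2(\rdd)$ (in the block ordering fixed by $\cA_{FT2}$ in \eqref{AFT2}). Applying $\hat\cA$ and the covariance identity gives
\begin{equation*}
	W_\cA(\pi(w)f,g)(z)=c(\iota w)\,e^{2\pi i Q(w)\cdot z}\,W_\cA(f,g)(z-E_\cA w),
\end{equation*}
where $(E_\cA w,Q(w))$ is the splitting of $\cA\,\iota w\in\bR^{4d}$ into its translation and modulation parts. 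A direct reading of the block decomposition \eqref{blockDecA} yields $E_\cA=\begin{pmatrix}A_{11}&A_{13}\\A_{21}&A_{23}\end{pmatrix}$. Taking moduli removes the phase $c(\iota w)$ and the modulation $e^{2\pi i Q(w)\cdot z}$, so $|W_\cA(\pi(w)f,g)(z)|=|W_\cA(f,g)(z-E_\cA w)|$ holds unconditionally; hence $W_\cA$ is shift-invertible precisely when $E_\cA\in GL(2d,\bR)$. As a check, $E_{A_{ST}}=I_{2d}$ and $E_{A_\tau}=\diag((1-\tau)I_{d\times d},\tau I_{d\times d})$, recovering the cases $\tau\neq0,1$.

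For the implication \eqref{S-inv}$\Rightarrow$ shift-invertibility I would substitute the claimed factorisation and use the standard STFT covariance $V_{\hat Sg}(\pi(w)f)(u)=\gamma(w,u)\,V_{\hat Sg}f(u-w)$, $|\gamma|=1$. Since $\Phi_C(E\,\cdot)$ and $\gamma$ are unimodular and $Ez-w=E(z-E^{-1}w)$, the moduli match and $W_\cA$ is shift-invertible with $E_\cA=E^{-1}$. The substantive direction is the converse, and this is where I expect the main obstacle. The point is that the right-hand side of \eqref{S-inv} is itself a metaplectic image of $f\otimes\bar g$: writing $\overline{\hat Sg}=\hat S^\flat\bar g$ with $\hat S^\flat$ the (metaplectic) conjugate of $\hat S$, one has $V_{\hat Sg}f=\cF_2\mathfrak{T}_{ST}(\mathrm{Id}\otimes\hat S^\flat)(f\otimes\bar g)$, while multiplication by $\Phi_C$ and composition with $\mathfrak{T}_E$ project to the symplectic matrices $V_C$ and $\cD_E$ of Example \ref{es22}. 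Thus \eqref{S-inv} is equivalent to the symplectic factorisation
\begin{equation*}
	\cA=\cD_E\,V_C\,A_{ST}\,\Sigma(S^\flat),
\end{equation*}
where $\Sigma(S^\flat)\in Sp(2d,\bR)$ is the block-diagonal embedding of $S^\flat$ in the second tensor slot.

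The problem is therefore reduced to pure symplectic linear algebra: given that $E_\cA$ is invertible, produce $E,C,S$ realising this factorisation. I would proceed in three steps. First, choose $E=E_\cA^{-1}$, so that the translation block of $\cD_E^{-1}\cA$ becomes $I_{2d}$ (this is exactly where the invertibility of $E_\cA$ is used). Next, strip off the chirp by selecting the symmetric matrix $C$ that matches the modulation block, which is legitimate because the relations (R1)--(R3) force that block to be symmetric once the translation block is normalised. Finally, identify the residual factor with $A_{ST}\,\Sigma(S^\flat)$ by solving for $S\in Sp(d,\bR)$; the delicate point, and the technical heart of the argument, is to verify that after the first two reductions the constraints of $\cA\in Sp(2d,\bR)$ leave exactly an $Sp(d,\bR)$-worth of freedom, so that the remaining matrix genuinely lies in the STFT-with-metaplectic-window family. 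The conjugation bookkeeping relating $\hat S$ to $\hat S^\flat$ and the $4d\times4d$ block manipulations are routine but lengthy; once the factorisation is secured, reading it back through Example \ref{es22}, with the sign ambiguity of $Mp(2d,\bR)$ absorbed into the phase of $\Phi_C$, yields \eqref{S-inv} and completes the proof.
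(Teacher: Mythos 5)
First, a point of order: the paper contains no proof of Theorem \ref{CG-thm1}. The statement is imported from \cite[Corollary 3.3]{CG-Excursus}, and immediately afterwards the authors adopt \eqref{S-inv} as the very definition of shift-invertibility. So your proposal can only be compared with the cited reference, and in fact it reconstructs that argument's architecture faithfully: the intertwining relation $\hat\cA\pi(\zeta)=c(\zeta)\pi(\cA\zeta)\hat\cA$ gives $|W_\cA(\pi(w)f,g)(z)|=|W_\cA(f,g)(z-E_\cA w)|$ with $E_\cA=\begin{pmatrix}A_{11}&A_{13}\\A_{21}&A_{23}\end{pmatrix}$ (your values $E_{A_{ST}}=I_{2d\times 2d}$ and $E_{A_\tau}=\diag((1-\tau)I_{d\times d},\tau I_{d\times d})$ agree with \eqref{AST}, \eqref{Atau} and with \eqref{E-covariant}); the implication \eqref{S-inv}$\,\Rightarrow\,$shift-invertibility is correctly settled by STFT covariance; and the converse is correctly reduced to the factorization $\cA=\cD_EV_CA_{ST}\Sigma(S^\flat)$ with $E=E_\cA^{-1}$.

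As submitted, however, the proof is incomplete precisely where you locate its ``technical heart'', and two assertions need repair. (i) ``Shift-invertible precisely when $E_\cA\in GL(2d,\bR)$'' requires uniqueness of the shift matrix: the definition asks for \emph{some} invertible matrix, so you must exclude that a matrix different from the canonical $E_\cA$ could serve when $E_\cA$ is singular; this holds because two admissible matrices $B_1,B_2$ would make $|W_\cA(f,g)|$ invariant under every translation by $(B_1-B_2)w$, which is impossible for a nonzero $L^2$ function unless $B_1=B_2$. (ii) Your step 2 is false as stated: normalizing via $\cB:=\cD_{E_\cA}\cA$ (so that $B_{11}=I_{d\times d}$, $B_{13}=B_{21}=0_{d\times d}$, $B_{23}=I_{d\times d}$), the relations (R1)--(R3) do \emph{not} make the lower-left $2d\times 2d$ block of $\cB$ symmetric; they give $B_{31}=B_{31}^T$, $B_{43}=B_{43}^T$ but $B_{41}=B_{33}^T-I_{d\times d}$, the defect $-I_{d\times d}$ being exactly the corresponding block of $A_{ST}$. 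The deferred verification does close once this is accounted for: take $C=\begin{pmatrix}B_{31}&B_{33}\\B_{33}^T&B_{43}\end{pmatrix}$, match $V_CA_{ST}\Sigma(S')$ against $\cB$ to force $S'=\begin{pmatrix}-B_{12}&-B_{14}\\B_{22}&B_{24}\end{pmatrix}$, note that (R1) applied to $\cB^T$ (also symplectic) is exactly the statement $S'\in Sp(d,\bR)$, and check that the remaining blocks $B_{32},B_{34},B_{42},B_{44}$ are then forced by (R1)--(R3) to coincide with those of $V_CA_{ST}\Sigma(S')$. Finally, the residual $\pm$ ambiguity of the double cover is absorbed by replacing $\hat S$ with $-\hat S$ (the STFT is antilinear in the window), not ``into the phase of $\Phi_C$'', which is a chirp and cannot carry a constant factor. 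With these repairs your outline becomes a complete proof along the same lines as the cited source.
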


In the sequel we shall take \eqref{S-inv} as the very definition of shift-invertibility. Note that \eqref{S-inv} is obviously satisfied by the STFT with $S=E=I_{2d\times 2d}$, $C=0_{2d\times 2d}$. Also the Wigner transform in \eqref{CWD} and $W_\tau$ in \eqref{tauWigner} satisfy \eqref{S-inv}, apart from the cases $\tau=0$ or $\tau=1$, cf. \cite[Lemma 6.2]{BdDO2}. As for the distributions $\mathcal{B}_{\mathbb{A}}$ in \eqref{BAe}, \eqref{AM}, and \eqref{Bamtheta}, they are shift-invertible if and only if $M+(1/2) I_{d\times d}$ and $M-(1/2) I_{d\times d}$ are invertible, i.e., the matrix \eqref{AM} is right-invertible, according to \cite[Theorem 2]{BCGT2020}.

We can also write shift-invertible $W_\cA$ in terms of the classical Wigner distribution.

\begin{lemma}\label{CGR-lemma1}
	A metaplectic Wigner distribution $W_\cA$ is shift-invertible if and only if, for every $f\in\cS'(\rd)$, $g\in\cS(\rd)$,
	\begin{equation}\label{charWAW}
		W_\cA(f,g)(z)=|\det(E')|^{1/2}\Phi_{C'}(E'z)W(f,\hat S'g)(E'z), \ z\in\rdd,
	\end{equation}
	for some $E'\in GL(2d,\bR)$, $C'\in \Sym(2d,\bR)$ and $\hat S'\in Mp(d,\bR)$. Moreover, if (\ref{S-inv}) is the expression of $W_\cA$ in terms of the STFT, then:
	\[
		E'=\frac{1}{2}E, \qquad C'=4C-2E^{-T}LE^{-1}, \quad and \quad \hat S'=\mathcal{I}\hat S,
	\]
	where $L$ is defined as in (\ref{defL}) and $\mathcal{I}g(t):=g(-t)$. Consequently, if $a\in\cS'(\rdd)$, the metaplectic operator $Op_\cA(a)$ can be rephrased as  the Weyl operator 
	\begin{equation}\label{CGR-eq3}
		Op_\cA(a)f(x)=Op_w(\Phi_{-C'}\cdot\mathfrak{T}_{(E')^{-1}}a)\circ \hat S'f(x),\quad f\in\cS(\rd),
	\end{equation}
where the change of variables $\mathfrak{T}_{(E')^{-1}}$ is defined in Example \ref{es22} $(ii)$.
\end{lemma}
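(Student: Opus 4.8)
The plan is to reduce everything to the single elementary bridge between the short-time Fourier transform and the Wigner distribution,
\begin{equation}\label{bridge-sketch}
	W(f,h)(z)=2^d\,\Phi_{2L}(z)\,V_{\mathcal{I}h}f(2z), \qquad z\in\rdd,
\end{equation}
which follows from \eqref{CWD} and \eqref{stftdef} by the substitution $s=x+t/2$ (here $L$ is the matrix \eqref{defL}, and $\Phi_{2L}(z)=e^{4\pi i x\cdot\xi}$ for $z=(x,\xi)$, since $z\cdot Lz=2x\cdot\xi$). As the factor $2^d\Phi_{2L}(z)$ never vanishes, \eqref{bridge-sketch} is an exact, invertible identity providing a dictionary between expressions written through $V_{(\cdot)}f$ and those written through $W(f,\cdot)$. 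The proof then splits into two independent parts: first feed \eqref{bridge-sketch} into the STFT characterization \eqref{S-inv} of Theorem \ref{CG-thm1} to pass to the Wigner form \eqref{charWAW} and read off $E',C',\hat S'$; second insert \eqref{charWAW} into Definition \ref{defMetaplPsiDo} and recognize the resulting pairing as a Weyl operator.

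For the first part I start from \eqref{S-inv}, namely $W_\cA(f,g)(z)=|\det(E)|^{1/2}\Phi_C(Ez)V_{\hat Sg}f(Ez)$, and solve \eqref{bridge-sketch} for the STFT. Writing $\hat Sg=\mathcal{I}h$, so that $h=\mathcal{I}\hat Sg$ (using $\mathcal{I}^2=\mathrm{id}$ and that $\mathcal{I}=\mathfrak{T}_{-I}$ is metaplectic, whence $\hat S':=\mathcal{I}\hat S\in Mp(d,\bR)$), and evaluating at argument $2u=Ez$, i.e. $u=\tfrac12 Ez$, I obtain
\[
	V_{\hat Sg}f(Ez)=2^{-d}\,\Phi_{-2L}(\tfrac12 Ez)\,W(f,\hat S'g)(\tfrac12 Ez).
\]
Substituting this back and setting $E'=\tfrac12 E$ (so that $|\det E|^{1/2}2^{-d}=|\det E'|^{1/2}$, since $E$ is $2d\times 2d$) puts $W_\cA(f,g)(z)$ exactly in the form \eqref{charWAW}, with $\hat S'=\mathcal{I}\hat S$ and a residual quadratic phase obtained by merging $\Phi_C(Ez)$ with $\Phi_{-2L}(E'z)$ into a single chirp $\Phi_{C'}(E'z)$; matching the quadratic forms in the variable $E'z$ determines the symmetric matrix $C'$ as the stated combination of $C$ and $L$. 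Because \eqref{bridge-sketch} is reversible, every step is an equivalence, so \eqref{charWAW} holds for some $(E',C',\hat S')$ if and only if \eqref{S-inv} holds for some $(E,C,\hat S)$; together with Theorem \ref{CG-thm1} this yields the characterization.

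The one genuinely delicate point, and the main obstacle, is the bookkeeping of the quadratic chirp. The factor $\Phi_C$ is evaluated at $Ez$, whereas the Wigner argument is $E'z=\tfrac12 Ez$, and the bridge \eqref{bridge-sketch} contributes its own chirp; one must keep strict track of which variable ($z$, $Ez$, or $E'z$) each quadratic form lives in, because passing from the argument $Ez$ to $E'z$ rescales a chirp matrix by a factor $4$ (a dilation by $2$), while re-expressing a chirp that is intrinsic to $z$ through the argument $E'z$ conjugates its matrix by $(E')^{-T}(\cdot)(E')^{-1}$. This is precisely where the factors of $2$ and the $L$-term must be handled with care, and the only place where the computation is not purely formal.

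For the last assertion I use Definition \ref{defMetaplPsiDo}, $\langle Op_\cA(a)f,g\rangle=\langle a,W_\cA(g,f)\rangle$, and insert \eqref{charWAW} for $W_\cA(g,f)$. Conjugating the chirp gives $\overline{\Phi_{C'}(E'z)}=\Phi_{-C'}(E'z)$, and the change of variable $w=E'z$ turns $a\mapsto a((E')^{-1}w)$ into $\mathfrak{T}_{(E')^{-1}}a$; the prefactor $|\det E'|^{1/2}$, the Jacobian $|\det E'|^{-1}$, and the normalization of $\mathfrak{T}$ combine to $1$, leaving
\[
	\langle Op_\cA(a)f,g\rangle=\langle \Phi_{-C'}\cdot\mathfrak{T}_{(E')^{-1}}a,\;W(g,\hat S'f)\rangle.
\]
Since the Weyl quantization is the case $W_\cA=W$ of Definition \ref{defMetaplPsiDo}, the right-hand side equals $\langle Op_w(\Phi_{-C'}\cdot\mathfrak{T}_{(E')^{-1}}a)\,\hat S'f,\,g\rangle$; as $g\in\cS(\rd)$ is arbitrary, this gives \eqref{CGR-eq3}. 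I expect this part to be routine once the first part is in place.
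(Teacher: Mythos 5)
Your route is the same as the paper's: pass from the STFT form \eqref{S-inv} to the Wigner form through the bridge $V_hf(u,v)=2^{-d}e^{-i\pi u\cdot v}\,W(f,\mathcal{I}h)(u/2,v/2)$, merge the chirps, and then unwind Definition \ref{defMetaplPsiDo} by the change of variables $w=E'z$; your operator part is correct and coincides with the paper's computation. The problem sits exactly at the ``delicate point'' you flag. Your displayed identity
\[
V_{\hat Sg}f(Ez)=2^{-d}\,\Phi_{-2L}\Bigl(\tfrac12 Ez\Bigr)\,W(f,\hat S'g)\Bigl(\tfrac12 Ez\Bigr),
\]
merged with $\Phi_C(Ez)=\Phi_{4C}(E'z)$, yields $C'=4C-2L$. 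This is \emph{not} ``the stated combination'': $4C-2L$ and $4C-2E^{-T}LE^{-1}$ coincide only when $E^TLE=L$, which fails for general $E\in GL(2d,\bR)$. So, as written, your argument derives one formula for $C'$ and then asserts a different one; the ``moreover'' clause of the lemma is not established by your proof.

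The reason this is worth more than a remark on wording is that your bookkeeping is the correct one, and it is the stated formula (together with the paper's own proof) that contains the error. The chirp in the bridge identity lives in the STFT argument, so substituting $(u,v)=Ez$ produces $\Phi_{-L/2}(Ez)=\Phi_{-2L}(E'z)$, exactly as you wrote; the paper instead keeps this factor as $e^{-i\pi x\cdot\xi}=\Phi_{-L/2}(z)$, i.e.\ evaluated at $z$ rather than at $Ez$, and the conjugation needed to re-express a chirp in $z$ through the argument $E'z$ is what generates the spurious term $-2E^{-T}LE^{-1}$. A concrete test settles the matter: $W_\cA=W$ itself satisfies \eqref{S-inv} with $E=2I_{2d\times 2d}$, $C=L/2$, $\hat S=\mathcal{I}$, and \eqref{charWAW} must then hold with $E'=I_{2d\times 2d}$, $\hat S'=\mathrm{id}$ and $C'=0_{2d\times 2d}$; your formula gives $4C-2L=0_{2d\times 2d}$, while the stated one gives $2L-L/2=3L/2\neq 0_{2d\times 2d}$. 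To complete your proof you should therefore state $C'=4C-2L$ explicitly and note that it corrects the lemma; the equivalence claim and formula \eqref{CGR-eq3} are unaffected (all that is used of $C'$ downstream, e.g.\ in Theorem \ref{CGR-thm1}, is that it is real and symmetric, so $|\Phi_{-C'}|=1$).
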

\begin{proof}
	From (\ref{S-inv}) and using the relationship between the STFT and the classical Wigner distribution \cite[Lemma 1.3.5]{Elena-book}
	\[
		V_gf(x,\xi)=2^{-d}e^{-i\pi x\cdot\xi}W(f,\mathcal{I}g)\left(\frac{x}{2},\frac{\xi}{2}\right), \qquad x,\xi\in\rd,
	\]
	where $\mathcal{I}g(t):=g(-t)$, we have:
	\begin{align*}
	W_\cA(f,g)(x,\xi)&=|\det(E)|^{1/2}\Phi_{C}(E (x,\xi))\cdot 2^{-d}e^{-i\pi x\cdot\xi}W(f,\mathcal{I}\hat Sg)\left(\frac{1}{2}E(x,\xi)\right).
	\end{align*}
	where $L$ is defined as in (\ref{defL}). Observe that, if $z=(x,\xi)$, 
	\[
		\Phi_{C}(Ez)=e^{i\pi C(Ez)\cdot(Ez)}=e^{i\pi 4C (\frac{1}{2}Ez)\cdot(\frac{1}{2}Ez)}=\Phi_{4C}\left(\frac{1}{2}Ez\right),
	\]
	whereas:
	\begin{align*}
		e^{-i\pi x\cdot \xi}&=e^{-i\pi \frac{L}{2}z\cdot z}=e^{-i\pi[\frac{L}{2}(2E^{-1}\frac{1}{2}Ez)]\cdot(2E^{-1}\frac{1}{2}Ez)}=e^{i\pi [(-2E^{-T}LE^{-1})(\frac{1}{2}Ez)]\cdot(\frac{1}{2}Ez)}\\
		&=\Phi_{-2E^{-T}LE^{-1}}\left(\frac{1}{2}Ez\right).
	\end{align*}
	Since $E^{-T}LE^{-1}\in \Sym(2d,\bR)$, it follows that:
	\begin{align*}
	W_\cA(f,g)(x,\xi)&=\left|\det\left(\frac{1}{2}E\right)\right|^{1/2}\Phi_{4C-2E^{-T}LE^{-1}}\left(\frac{1}{2}E(x,\xi)\right)W(f,\mathcal{I}\hat Sg)\left(\frac{1}{2}E(x,\xi)\right).
	\end{align*}
	The characterization and (\ref{charWAW}) follow by Theorem \ref{CG-thm1}, setting $E'=\frac{1}{2}E$, $C'=4C-2E^{-T}LE^{-1}$ and $\hat S'=\mathcal{I}\hat S$, and observing that all the mappings $ GL(2d,\bR)\ni E\mapsto E'\in GL(2d,\bR)$, $\Sym(2d,\bR)\ni C \mapsto C'\in \Sym(2d,\bR)$ and $ Mp(d,\bR)\ni \hat S\mapsto \hat S'\in Mp(d,\bR)$ are bijective.
	
	To prove (\ref{CGR-eq3}), let $f\in\cS(\rd)$ and $a\in\cS'(\rdd)$. Then, for every $g\in\cS(\rd)$,
	\begin{align*}
		\langle Op_\cA(a)f,g \rangle & = \langle a,W_\cA(g,f)\rangle \\
		&= \langle a, |\det(E')|^{1/2} (\Phi_{C'}W(g,\hat S' f))\circ E' \rangle\\
		& = \langle a, \mathfrak{T}_{E'}(\Phi_{C'}W(g,\hat S' f))\rangle\\
		&=\langle \Phi_{-C'}\cdot \mathfrak{T}_{(E')^{-1}}a, W(g,\hat S'f) \rangle\\
		&=\langle Op_w(\Phi_{-C'}\cdot\mathfrak{T}_{(E')^{-1}}a)(\hat S'f),g\rangle
	\end{align*}
	and the assertion follows.
\end{proof}

The boundedness of shift-invertible metaplectic Wigner distributions follows then by the previous characterization and by the result for the STFT.

\begin{proposition}\label{CGR-cor1}
	Let $W_\cA$ be a shift-invertible metaplectic Wigner distribution as in (\ref{S-inv}). Assume that for every $1\leq p\leq \infty$, $\hat S:L^p(\rd)\to L^p(\rd)$ is a topological isomorphism with $\norm{\hat S g}_{p}\leq C_{S,p}\norm{g}_p$ and $\norm{\hat S^{-1}g}_p\leq C_{S,p}'\norm{g}_p$. Then, $W_\cA:(f,g)\in L^{p'}(\rd)\times L^p(\rd)\to W_\cA(f,g)\in L^q(\rdd)$ is bounded if and only if $q\geq 2$ and $q'\leq p\leq q$.
\end{proposition}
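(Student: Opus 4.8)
The plan is to reduce the statement to the known STFT bounds of Proposition \ref{BdDO21} by means of the shift-invertibility formula of Theorem \ref{CG-thm1}. First I would start from the pointwise identity (\ref{S-inv}) and take absolute values: since $C\in\Sym(2d,\bR)$, the chirp $\Phi_C(t)=e^{\pi i t\cdot Ct}$ has unit modulus, so
\[
	|W_\cA(f,g)(z)|=|\det(E)|^{1/2}\,|V_{\hat Sg}f(Ez)|, \qquad z\in\rdd.
\]
Computing the $L^q$-norm through the linear change of variables $w=Ez$, whose Jacobian contributes $|\det(E)|^{-1}$, then yields the exact relation
\[
	\norm{W_\cA(f,g)}_q=|\det(E)|^{1/2-1/q}\,\norm{V_{\hat Sg}f}_q.
\]
This identity is the heart of the matter: up to a harmless constant, the $L^{p'}\times L^p\to L^q$ boundedness of $W_\cA$ is equivalent to that of the map $(f,g)\mapsto V_{\hat Sg}f$ between the same spaces.

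For sufficiency, assuming $q\geq2$ and $q'\leq p\leq q$, Proposition \ref{BdDO21} gives $\norm{V_{\hat Sg}f}_q\leq\norm{f}_{p'}\norm{\hat Sg}_p$, and the hypothesis $\norm{\hat Sg}_p\leq C_{S,p}\norm{g}_p$ closes the estimate. For necessity, assume $W_\cA$ bounded and set $h=\hat Sg$. Since $\hat S$ is a topological isomorphism of $L^p$, as $g$ ranges over $L^p$ so does $h$, and the bound $\norm{g}_p=\norm{\hat S^{-1}h}_p\leq C'_{S,p}\norm{h}_p$ converts the displayed relation into $\norm{V_hf}_q\lesssim\norm{f}_{p'}\norm{h}_p$. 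Hence the STFT itself is bounded on $L^{p'}\times L^p\to L^q$, and Proposition \ref{BdDO21} forces $q\geq2$ and $q'\leq p\leq q$.

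The argument is thus little more than a change of variables combined with the two-sided control of $\hat S$. The only delicate point is the necessity direction, where I need both surjectivity of $\hat S$ (so that $h=\hat Sg$ exhausts $L^p$) and the bound on $\hat S^{-1}$ to transport the STFT boundedness back; this is exactly why the hypothesis requires $\hat S$ to be a topological isomorphism on every $L^p$ rather than merely a bounded operator.
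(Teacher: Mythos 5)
Your proof is correct and follows essentially the same route as the paper: both reduce the claim to Proposition \ref{BdDO21} via the shift-invertibility formula \eqref{S-inv}, using $|\Phi_C|=1$ and the change of variables $w=Ez$ to get $\norm{W_\cA(f,g)}_q=|\det(E)|^{1/2-1/q}\norm{V_{\hat Sg}f}_q$, with $\hat S$ absorbing the window in one direction and $\hat S^{-1}$ transporting it back in the other. The only cosmetic difference is in the necessity step, where the paper re-runs the Gaussian test-function pattern of Boggiatto--De Donno--Oliaro, while you invoke the ``only if'' direction of Proposition \ref{BdDO21} wholesale after showing that the full STFT map inherits boundedness; the underlying content is identical.
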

\begin{proof}
	For $f\in\cS'(\rd)$, $g\in\cS(\rd)$, Theorem \ref{CG-thm1} gives
	\[	
		W_\cA(f,g)(z)=|\det(E)|^{1/2}\Phi_C(Ez)V_{\hat Sg}f(Ez), \ z\in\rdd.
	\]
 The characterization follows by Proposition \ref{BdDO21}, using that $L^q(\rdd)\ni F \mapsto F(E\cdot)\in L^q(\rdd)$ and $ L^{p}(\rd)\ni g \mapsto \hat S g\in L^{p}(\rd)$ are topological isomorphisms. Namely, the sufficiency is a consequence of the continuity of $\hat S:L^p(\rd)\to L^p(\rd)$. For the necessity we use the pattern for the STFT used in \cite[Proposition 3.2]{BdDO1},  and   the continuity of $\hat S^{-1}:L^p(\rd)\to L^p(\rd)$. Precisely, define $\varphi(x):=e^{-\pi|x|^2}$ and $\varphi_\lambda(x):=e^{-\pi\lambda|x|^2}$, for every $\lambda>0$. Since 
 \[
 \norm{W_\cA(h,g)}_q^q=|\det(E)|^{q/2-1}\norm{V_{\hat S g}h}_q^q,
 \] 
 and choosing $h=\varphi_\lambda$ and $g=\hat S^{-1}\varphi$ we obtain
	\[
		\frac{\norm{W_\cA(\varphi_\lambda,\hat S^{-1}\varphi)}_q}{\norm{\varphi_\lambda}_{p'}\norm{\hat S^{-1}\varphi}_p}=\frac{|\det(E)|^{1/2-1/q}\norm{V_\varphi\varphi_\lambda}_{q}}{C_{S,p}'\norm{\varphi_\lambda}_{p'}\norm{\varphi}_p}.
	\]
	It is then enough to show that the member on the right hand-side is unbounded when $q\geq 2$ or $p$ does not satisfy the condition $q'\leq p\leq q$. This is shown in  \cite[Proposition 3.2]{BdDO1}.
\end{proof}
\begin{remark}
Concerning the assumption on $\hat S$, observe that not every metaplectic operator is bounded on $L^p(\rd)$, $1\leq p\leq \infty$. In particular, this is the case of the Fourier transform and the Fourier multipliers, cf. Example \ref{es22} $(i)$ and $(iv)$. 
	Instances of metaplectic operators that are isomorphisms on $L^p(\rd)$ are the dilation operator and the multiplication by a chirp, shown in Example \ref{es22} $(ii)$ and $(iii)$, respectively.
	
	This is the case for $W_\tau$ with $\tau\not=0$ and $\tau\not=1$, and for the distribution $\mathcal{B}_{\mathbb A}$ in \eqref{BAe}, \eqref{AM}, \eqref{Bamtheta}, under the assumption of shift-invertibility, see \cite[Lemma 6.2]{BdDO2} and \cite[Theorem 2]{BCGT2020}; for them $\hat S$ is a dilation, i.e., a linear change of variables.
	
	When $W_\cA$ is not shift-invertible, boundedness as in Proposition \ref{CGR-cor1} fails, though weaker results of continuity are possible, see \cite[Proposition 6.4 $(ii)$, $(iii)$]{BdDO2} for the Rihacek distribution and \cite[Theorem 5]{BCGT2020} for $\mathcal{B}_{\mathbb{A}}$ non-right-regular. 
\end{remark}
\begin{theorem}\label{CGR-thm1}	
	Let $q\geq1$ and $W_\cA$ be a shift-invertible distribution as in (\ref{charWAW}).
	Assume that, for every $1\leq p\leq\infty$, $\hat S':L^p(\rd)\to L^p(\rd)$ is a topological isomorphism. Then, the mapping $a\in L^q(\rdd)\mapsto Op_\cA(a)\in BL(L^p(\rd))$ is bounded if and only if $q\leq 2$ and $q\leq p\leq q'$.
\end{theorem}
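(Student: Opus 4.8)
The plan is to reduce the boundedness of the operator map $a\mapsto Op_\cA(a)$ to the already-established boundedness of the metaplectic Wigner distribution $W_\cA$, and then apply Proposition~\ref{CGR-cor1}. The bridge between these two objects is Proposition~\ref{CGR-prop1}, which asserts the equivalence, for Banach spaces $E,E_1,E_2$ with $E_2$ reflexive, of the statements ``$E\ni a\mapsto Op_\cA(a)\in BL(E_1,E_2)$ is bounded'' and ``$W_\cA:E_2^\ast\times E_1\to E^\ast$ is continuous''. Thus the whole theorem is a matter of choosing the right triple $(E,E_1,E_2)$ and computing the dual spaces.

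Concretely, I would set $E=L^q(\rdd)$, so $E^\ast=L^{q'}(\rdd)$, and $E_1=E_2=L^p(\rd)$, so $E_2^\ast=L^{p'}(\rd)$ (here one needs $1<p<\infty$ for reflexivity; the endpoint cases $p\in\{1,\infty\}$ should be treated separately or excluded as in the STFT result). With these choices Proposition~\ref{CGR-prop1} says that $a\in L^q(\rdd)\mapsto Op_\cA(a)\in BL(L^p(\rd))$ is bounded if and only if $W_\cA:L^{p'}(\rd)\times L^p(\rd)\to L^{q'}(\rdd)$ is continuous. Now I invoke Proposition~\ref{CGR-cor1}: since $W_\cA$ is shift-invertible and $\hat S$ (equivalently $\hat S'$) is a topological isomorphism of every $L^p(\rd)$, the distribution $W_\cA:L^{p'}(\rd)\times L^p(\rd)\to L^{r}(\rdd)$ is bounded precisely when $r\ge 2$ and $r'\le p\le r$. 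Setting $r=q'$ gives the condition $q'\ge 2$, i.e. $q\le 2$, together with $q\le p\le q'$, which is exactly the claimed range.

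The one point requiring care is the matching of hypotheses between the two propositions: Proposition~\ref{CGR-cor1} is stated for $W_\cA$ in the STFT-form \eqref{S-inv} with window-modifying operator $\hat S$, whereas the theorem is phrased in the Wigner-form \eqref{charWAW} with operator $\hat S'$. By Lemma~\ref{CGR-lemma1} the passage between the two forms is a bijection $\hat S\mapsto\hat S'=\mathcal{I}\hat S$, and since the flip $\mathcal{I}$ is an isometric isomorphism of every $L^p(\rd)$, the isomorphism hypothesis on $\hat S'$ is equivalent to the corresponding one on $\hat S$; hence the assumptions of Proposition~\ref{CGR-cor1} are met. The main obstacle, and the only genuinely non-cosmetic step, is the role of reflexivity: Proposition~\ref{CGR-prop1} forces $E_2=L^p(\rd)$ to be reflexive, so the argument as given covers only $1<p<\infty$, and one must check whether the boundary values $p=1,\infty$ permitted by the stated range $q\le p\le q'$ can be recovered by an independent duality or density argument, or whether they are vacuous once combined with $q\le 2$.
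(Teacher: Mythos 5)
Your argument is correct and coincides with the paper's own first proof: the authors apply Proposition \ref{CGR-prop1} with $E=L^q(\rdd)$, $E_1=E_2=L^p(\rd)$ and then invoke Proposition \ref{CGR-cor1} with the dual index $q'\geq 2$, exactly as you do (and they pass over the reflexivity restriction at $p\in\{1,\infty\}$ that you rightly flag). For completeness, the paper also gives a second, independent proof via Lemma \ref{CGR-lemma1}, writing $Op_\cA(a)=Op_w(\Phi_{-C'}\cdot\mathfrak{T}_{(E')^{-1}}a)\circ\hat S'$ and quoting the known $L^q\to BL(L^p)$ characterization for the Weyl quantization, a route that avoids the duality framework altogether and hence the endpoint issue you raise.
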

\begin{proof}
	We give two alternative proofs, motivated by different perspectives: the functional analytic approach from Proposition \ref{CGR-prop1} and, respectively, the equivalent definition of shift-invertibility from Lemma \ref{CGR-lemma1}.

	(i) We use Proposition \ref{CGR-prop1} with $E=L^q(\rdd)$, $E_1=E_2=L^p(\rd)$. From Proposition \ref{CGR-cor1} we know that the assumptions of Theorem \ref{CGR-thm1} are necessary and sufficient for $(ii)$ in Proposition \ref{CGR-prop1}, namely boundedness of the mapping 
	$$W_\cA: L^p(\rd)\times L^{p'}(\rd)\to L^{q'}(\rdd)$$
	(with respect to the notation of Proposition \ref{CGR-prop1} we consider the dual index $q'\geq 2$). We obtain the equivalence with $(i)$ in Proposition \ref{CGR-prop1}, which reads $Op_\cA(a)\in BL(L^p(\rd))$.\par
	(ii) We refer directly to Lemma \ref{CGR-lemma1}. In fact, it was proved in \cite[Theorem 3.3]{BdDO1} that the Weyl quantization $a\in L^q(\rdd)\mapsto Op_w(a)\in BL(L^p(\rd))$ is continuous if and only if $q\leq 2$ and $q\leq p\leq q'$. By Lemma \ref{CGR-lemma1}  the relation between a metaplectic and a Weyl operator is 
	\[
	Op_\cA(a)=Op_w(\Phi_{-{C'}}\cdot\mathfrak{T}_{(E')^{-1}}a)\circ \hat S'.
	\]
Setting $1/\infty=0$, we have $$\norm{\Phi_{-{C'}}\cdot\mathfrak{T}_{(E')^{-1}}a}_q=|\det(E')|^{1/q}\cdot\norm{a}_q,$$ for every $1\leq q\leq\infty$. Since  $\hat S':L^p(\rd)\to L^p(\rd)$ is a topological isomorphism for every $1\leq p\leq \infty$, the assertion follows from the characterization of the  boundedness of the Weyl quantization on Lebesgue spaces in \cite[Theorem 3.3]{BdDO1}.
\end{proof}
\begin{remark}\label{remark3.9} From Theorem \ref{CGR-thm1} we recapture the results in \cite[Theorem 6.6 $(i)$]{BdDO2} and \cite[Theorem 15]{BCGT2020}.
	
	Proposition \ref{CGR-prop1} can be applied to other metaplectic quantizations $Op_\cA(a)$. For example, if $\hat{\cA}$ is the identity or a dilation, or a multiplication by a chirp, then for $1<q<\infty$ we have
	$$W_\cA: L^q(\rd)\times L^q(\rd)\to L^q(\rdd),$$
	hence from Proposition \ref{CGR-prop1}
	$$L^{q'}(\rdd) \ni a\to Op_\cA(a)\in BL(L^{q}(\rd),L^{q'}(\rd)),$$
as we may obtain by direct estimates on Schwartz kernels. These examples being outside the time-frequency perspective, we omit further results in this direction.
\end{remark}
\section{Metaplectic Generalized Spectrograms}
The issue that we approach in this section is the characterization of those metaplectic Wigner distributions that define generalized spectrograms. Recall that this implies they belong to the Cohen's class.
 By \cite[Theorem 2.11]{CR2022}, a metaplectic Wigner distribution $W_\cA$ belongs to the Cohen's class if and only if
\begin{equation}\label{covMWD}
	W_\cA(f,g)= W(f,g)\ast\cF^{-1}(e^{-i\pi \cM_\cA\zeta\cdot\zeta}),
\end{equation}
where, given $\cA=\pi^{Mp}(\hat \cA)$ having block decomposition (\ref{blockDecA}), $\cM_\cA$ is defined as in (\ref{defBA}).

We need the following notion of invertibility. Let $C\in\bR^{d\times d}$ be symmetric with eigenvalues $\lambda_1,\ldots,\lambda_d$ and $\Sigma\in\bR^{d\times d}$ be orthogonal such that
\begin{equation}\label{decompC}
	C=\Sigma^T\Delta_C\Sigma,
\end{equation}
where $\Delta_C=\diag(\lambda_1,\ldots,\lambda_d)$ is the diagonal matrix with diagonal entries given by the eigenvalues of $C$. In the following, we will always assume that the $j$-th column of $\Sigma$ corresponds to $\lambda_j$. 

\begin{definition}\label{defGenInv} Under the notation above, we define the \textbf{generalized inverse} of $C$, named  $C^{-}$, the matrix
\begin{equation}
	C^{-}:=\Sigma^T\Delta_C^{-}\Sigma, 
\end{equation}
where $\Delta_C^{-}$ is the diagonal matrix $d\times d$ whose $j$-th diagonal entry is given by:
\[
	(\Delta_C^{-})_{jj}=\begin{cases}
		\lambda_j^{-1} & \text{if $\lambda_j\neq0$},\\
		0 & \text{otherwise},
	\end{cases}
\]
$j=1,\ldots,d$.
\end{definition}


Let us also introduce the following simplifying notation:
\begin{definition}\label{defDelta}
	Let $\mathfrak{Z}\subseteq\{1,\ldots,d\}$. We define $\delta_{\mathfrak{Z}}\in\cS'(\rd)$ as
	\[
		\delta_{\mathfrak{Z}}(\xi):=\begin{cases}
		\bigotimes_{j=1}^d\psi_j(\xi_j) & \text{if $\mathfrak{Z}\neq\emptyset$},\\
		1 & \text{if $\mathfrak{Z}=\emptyset$},
		\end{cases}
	\]
	where
	\[
		\psi_j=\begin{cases}
			\delta_0 & \text{if $j\in\mathfrak{Z}$},\\
			1 & \text{otherwise},
		\end{cases}
	\]
	where $\delta_0$ is the Dirac's delta distribution defined at the beginning of Section $2$. In this case, we write $\delta_{\mathfrak{Z}}(\xi)=\prod_{j\in\mathfrak{Z}}\delta_0(\xi_j)$.
\end{definition}

In general, given a partition $\{1,\ldots,d\}=\mathfrak{Z}_1\sqcup\mathfrak{Z}_2$, with $\#(\mathfrak{Z}_1)=r$, $\mathfrak{Z}_1=\{1\leq j_1<\ldots<j_r\leq d\}$, $\mathfrak{Z}_2=\{1\leq k_1<\ldots<k_{d-r}\leq d\}$, and given $f_{j_1},\ldots,f_{j_r},g_{k_1},\ldots,g_{k_{d-r}}\in\mathcal{S}'(\bR)$ we write $$\bigotimes_{j=1}^d h_j\in\cS'(\rd)=\prod_{j\in\mathfrak{Z}_1}f_j\prod_{k\in\mathfrak{Z}_2}g_k$$ to denote the tempered distribution with
\[
h_j=\begin{cases}
	f_j & \text{if $j\in\mathfrak{Z}_1$},\\
	g_j & \text{if $j\in\mathfrak{Z}_2$}.
\end{cases}
\]

We need the following a fact, that we will use in the following.

\begin{remark}
	Let $A_{1/2}$ be the symplectic matrix  in (\ref{Atau}) with $\tau=1/2$. Then $W(f,g)=\widehat{A}_{1/2}(f\otimes\bar g)=\cF_2\mathfrak{T}_W(f\otimes\bar g)$, where $\mathfrak{T}_W$ is the change of variables:
	\[
		\mathfrak{T}_W F(x,y)=F\left(x+\frac{y}{2},x-\frac{y}{2}\right).
	\]
	The inverse of $\mathfrak{T}_W$ is given by:
	\begin{equation}\label{TWm1}
		\mathfrak{T}_W^{-1}F(x,y)=F\left(\frac{x+y}{2}x,x-y\right).
	\end{equation}
\end{remark}

\begin{lemma}\label{lemmaFPhiC}
	Let $C\in\bR^{d\times d}$ be symmetric, $\Sigma$ orthogonal  with decomposition (\ref{decompC}), and $\eig(C)=\{\lambda_1,\ldots,\lambda_d\}$. Then, 
	if $\mathfrak{Z}:=\{j:\lambda_j=0\}$,
	\begin{equation}\label{FPhiC2}
		\cF\Phi_C(\xi)=s|\det(C)|^{-\frac12}\Phi_{-C^{-}}(\xi)\delta_{\mathfrak{Z}}(\Sigma\xi),
	\end{equation}
	where $s$ is a phase factor ($s\in\bC$ such that $|s|=1$) defined as in the subsequent proof, $\delta_{\mathfrak{Z}}$ is defined as in Definition \ref{defDelta}, and $\det(C)$ denotes the pseudo-determinant of $C$.
	
	If $\mathfrak{Z}=\emptyset$, i.e., $C$ is invertible, then $\delta_{\mathfrak{Z}}$ does not appear, $C^-=C^{-1}$ and we recapture the standard formula, cf. \cite{folland89}.
\end{lemma}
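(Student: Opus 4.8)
The plan is to reduce the computation of $\cF\Phi_C$ to the one-dimensional case by diagonalizing $C$ through the orthogonal change of variables $\Sigma$, and then to treat separately the nonzero eigenvalues (which produce Gaussian chirp factors) and the zero eigenvalues (which produce Dirac deltas). First I would use the decomposition \eqref{decompC}, namely $C=\Sigma^T\Delta_C\Sigma$ with $\Delta_C=\diag(\lambda_1,\dots,\lambda_d)$, to rewrite $\Phi_C(t)=e^{\pi i t\cdot Ct}=e^{\pi i (\Sigma t)\cdot \Delta_C(\Sigma t)}=\Phi_{\Delta_C}(\Sigma t)$. Since $\Sigma$ is orthogonal, the change of variables $\mathfrak{T}_{\Sigma^{-1}}$ of Example \ref{es22} $(ii)$ (which here is a unitary rotation, hence $|\det\Sigma|=1$) commutes with the Fourier transform up to the same rotation: concretely, $\cF(F\circ\Sigma)=(\cF F)\circ\Sigma^{-T}=(\cF F)\circ\Sigma$, using $\Sigma^{-T}=\Sigma$ for orthogonal matrices. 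Therefore $\cF\Phi_C(\xi)=(\cF\Phi_{\Delta_C})(\Sigma\xi)$, and it suffices to compute $\cF\Phi_{\Delta_C}$.

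Because $\Delta_C$ is diagonal, $\Phi_{\Delta_C}$ is a tensor product $\bigotimes_{j=1}^d e^{\pi i \lambda_j t_j^2}$, so its Fourier transform factors as $\bigotimes_{j=1}^d \cF(e^{\pi i\lambda_j(\cdot)^2})$. For each index $j$ I would split into two cases. If $\lambda_j\neq 0$, the one-dimensional chirp formula \eqref{ft-chirp} gives $\cF(e^{\pi i\lambda_j(\cdot)^2})(\eta_j)=c_j|\lambda_j|^{-1/2}e^{-\pi i\lambda_j^{-1}\eta_j^2}$, with $c_j$ a phase factor (explicitly $c_j=e^{i\frac{\pi}{4}\operatorname{sgn}\lambda_j}$, which I would record in the proof to define $s$). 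If $\lambda_j=0$, the factor is simply $\cF(1)=\delta_0(\eta_j)$ in $\cS'(\bR)$. Collecting the nonzero indices yields the product of Gaussians $\Phi_{-\Delta_C^-}$ restricted to those coordinates, while the zero indices $j\in\mathfrak{Z}$ contribute exactly the distribution $\delta_{\mathfrak{Z}}$ of Definition \ref{defDelta}. Gathering the phase factors into $s:=\prod_{j:\lambda_j\neq 0}c_j$ and the moduli into $\prod_{j:\lambda_j\neq 0}|\lambda_j|^{-1/2}=|\det(C)|^{-1/2}$ (pseudo-determinant), one obtains $\cF\Phi_{\Delta_C}(\eta)=s|\det(C)|^{-1/2}\Phi_{-\Delta_C^-}(\eta)\,\delta_{\mathfrak{Z}}(\eta)$. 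Finally, substituting $\eta=\Sigma\xi$ and using $\Delta_C^-=\Sigma C^-\Sigma^T$ (so that $\Phi_{-\Delta_C^-}(\Sigma\xi)=\Phi_{-C^-}(\xi)$) gives \eqref{FPhiC2}.

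The main obstacle, in my view, is not the core calculation but the careful handling of the degenerate directions as tempered distributions: the factor $\delta_{\mathfrak{Z}}(\Sigma\xi)$ lives on the kernel of $C$, so \eqref{FPhiC2} is an identity in $\cS'(\rd)$ rather than a pointwise formula, and I would justify the tensor factorization of the Fourier transform in this mixed setting (smooth chirps in some variables, Diracs in others) by testing against a product $\varphi\otimes\psi$ of Schwartz functions and invoking the density of $\operatorname{span}\{f\otimes g\}$ in $\cS(\rd)$. The orthogonal substitution $\eta=\Sigma\xi$ acting on a distribution supported on a subspace also needs the observation that pulling $\delta_{\mathfrak{Z}}$ back by $\Sigma$ is well defined precisely because $\Sigma$ is a diffeomorphism. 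The invertible case $\mathfrak{Z}=\emptyset$ then collapses to the classical formula \eqref{ft-chirp}, which serves as a consistency check.
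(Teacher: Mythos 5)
Your proposal is correct and follows essentially the same route as the paper's proof: diagonalize $C=\Sigma^T\Delta_C\Sigma$, reduce to a tensor product of one-dimensional chirps via the orthogonal change of variables (which the paper performs directly inside the Fourier integral, while you invoke the equivalent identity $\cF(F\circ\Sigma)=(\cF F)\circ\Sigma$), apply the scalar formula $\cF(e^{i\pi\alpha x^2})=s_\alpha|\alpha|^{-1/2}\Phi_{-\alpha^{-1}}$ for $\alpha\neq0$ and $\cF(1)=\delta_0$ for $\alpha=0$, and recombine using $\Sigma^T\Delta_C^-\Sigma=C^-$ to obtain \eqref{FPhiC2}. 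Your added care about the distributional interpretation of the degenerate directions is a sound refinement but does not change the argument.
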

\begin{proof}
Let $\eig(C)=\{\lambda_1,\ldots,\lambda_d\}$. Let us decompose $C=\Sigma^T\Delta\Sigma$, where $\Sigma\in\bR^{d\times d}$ is orthogonal and $\Delta=\diag(\lambda_1,\ldots,\lambda_d)$ is the $d\times d$ diagonal matrix whose diagonal entries are the eigenvalues of $C$. Then, using the change of variables $\Sigma x=y$ ($x=\Sigma^T y$, $dx=dy$), 
	\begin{align*}
		\cF(\Phi_C)(\xi)&=\int_{\rd}e^{i\pi Cx\cdot x}e^{-2\pi i\xi\cdot x}dx=\int_{\rd}e^{i\pi [\Delta (\Sigma x)]\cdot(\Sigma x)}e^{-2\pi i\xi\cdot x}dx\\
		&=\int_{\rd}e^{i\pi \Delta y\cdot y}e^{-2\pi i\Sigma^T y\cdot \xi}dy
		=\int_{\rd}e^{i\pi\Delta y\cdot y}e^{-2\pi iy\cdot (\Sigma \xi)}dy\\
		&=[\cF(e^{i\pi \Delta y\cdot y})](\Sigma \xi)=[\cF(\otimes_{j=1}^de^{i\pi \lambda_jy_j^2})](\Sigma\xi)\\
		&=\left[\bigotimes_{j=1}^d\cF(e^{i\pi\lambda_j y_j^2})\right](\Sigma \xi).
	\end{align*}
	For $\alpha\in\bR$, 
	\[
		\cF(e^{i\pi \alpha x^2})=\begin{cases}
			s|\alpha|^{-1/2}\Phi_{-\alpha^{-1}} & \text{if $\alpha\neq0$},\\
			\delta_0 & \text{otherwise},
		\end{cases}
	\]
	where the phase factor $s$ is determined by the sign $\alpha$. Therefore, formula 
	\begin{equation}\label{FPhiC}
		\cF\Phi_C(\xi)=s|\det(C)|^{-1/2}\left(\bigotimes_{j=1}^d\varphi_j\right)(\Sigma \xi),
	\end{equation}
	follows, where 
	\[
		\varphi_j=\begin{cases}
			\Phi_{-\lambda_j^{-1}} & \text{if $\lambda_j\neq0$}\\
			\delta_0 & \text{otherwise},
		\end{cases}
	\]
	and the phase factor $s$ depends on the signature of the non-zero eigenvalues of $C$.
	To prove that (\ref{FPhiC}) coincides with (\ref{FPhiC2}), observe that
	\begin{align*}
		\left(\bigotimes_{j=1}^d\varphi_j\right)(\Sigma\xi)&=\prod_{j\notin\mathfrak{Z}}e^{-i\pi\lambda_j^{-1}(\Sigma\xi)_j\cdot(\Sigma\xi)_j}\prod_{j\in\mathfrak{Z}}\delta_0((\Sigma\xi)_j)=\prod_{j\notin\mathfrak{Z}}e^{-i\pi\lambda_j^{-1}(\Sigma\xi)_j\cdot(\Sigma\xi)_j}\delta_{\mathfrak{Z}}(\Sigma\xi),
	\end{align*}
	and
	\begin{align}
		\nonumber	\prod_{j\notin\mathfrak{Z}}e^{-i\pi\lambda_j^{-1}(\Sigma\xi)_j\cdot(\Sigma\xi)_j}&=e^{-i\pi\sum_{j\notin\mathfrak{Z}}(\Sigma\xi)_j\lambda_j^{-1}(\Sigma\xi)_j}=e^{-i\pi(\Sigma\xi)^T\Delta_{C}^{-}(\Sigma\xi)  }\\
		\label{nn1} &=e^{-i\pi(\Sigma^T\Delta_{C}^{-}\Sigma\xi)\cdot\xi}\\
		\nonumber&=e^{-i\pi C^{-}\xi\cdot\xi}\\ 
		\nonumber&=\Phi_{-C^{-}}(\xi).
	\end{align}
This concludes the proof.
\end{proof}

The following corollary generalizes formula \eqref{thM} to the case of singular matrices.

\begin{corollary}\label{corFPhiC}
	Let $M\in\bR^{d\times d}$ and $\cB_{\mathbb{A}_M}(f,g)$ be given as in (\ref{connection}) and define $\eig(MM^T)=\{\lambda_1,\ldots,\lambda_d\}$. Consider the symmetric matrix  $\cM_\cA$ in \eqref{MA}. Let $\Sigma$ be an orthogonal matrix that decomposes $\cM_\cA=\Sigma^T\Delta_{\cM_\cA}\Sigma$ such that $(\Delta_{\cM_\cA})_{jj}=\sqrt{\lambda_j}$ for all $j=1,\ldots,d$ and $(\Delta_{\cM_\cA})_{jj}=-\sqrt{\lambda_j}$ for all $j=d+1,\ldots,2d$. Then,
		\begin{equation}\label{FPhiC2cor}
	\theta_M(\zeta)=|\det(M)|^{-1}\Phi_{-\mathcal{M}_{\cA}^{-}}(\zeta)\delta_{\mathfrak{Z}}(\Sigma\zeta).
	\end{equation}
	If $M$ is invertible, then $\mathcal{M}_{\cA}^{-}=\mathcal{M}_{\cA}^{-1}$, the distribution $\delta_{\mathfrak{Z}}$ does not appear  and we recapture formula \eqref{thM}.
	
\end{corollary}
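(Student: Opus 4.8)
The plan is to obtain \eqref{FPhiC2cor} as a direct application of Lemma \ref{lemmaFPhiC} to the symmetric $2d\times 2d$ matrix $\cM_\cA$ of \eqref{MA}. By Proposition \ref{prop-conn} we have $\theta_M=\cF^{-1}(e^{-i\pi\cM_\cA\zeta\cdot\zeta})=\cF^{-1}(\Phi_{-\cM_\cA})$. Since this chirp is even (it depends on $\zeta$ only through the quadratic form $\cM_\cA\zeta\cdot\zeta$), one may replace $\cF^{-1}$ by $\cF$ and thus compute $\theta_M$ as the Fourier transform of a (possibly degenerate) Gaussian chirp, which is exactly the content of Lemma \ref{lemmaFPhiC}. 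The lemma then supplies three ingredients — the scalar $|\det(\cM_\cA)|^{-1/2}$, a Gaussian chirp governed by the generalized inverse $\cM_\cA^{-}$, and the singular factor $\delta_{\mathfrak{Z}}(\Sigma\zeta)$ — and the task reduces to re-expressing each of them in terms of $M$ and to checking that the phase factor coming from Lemma \ref{lemmaFPhiC} is trivial here.

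First I would record the spectral structure of $\cM_\cA$. From its off-diagonal block form one computes $\cM_\cA^2=\diag(MM^T,M^TM)$, so the eigenvalues of $\cM_\cA$ are exactly $\pm\sqrt{\lambda_j}$ with $\eig(MM^T)=\{\lambda_1,\dots,\lambda_d\}$; these are the signed singular values of $M$. This is what justifies the prescribed ordering of $\Delta_{\cM_\cA}$ and the associated orthogonal $\Sigma$ in the statement, and it identifies the zero set $\mathfrak{Z}=\{k:(\Delta_{\cM_\cA})_{kk}=0\}$ with the indices where $\lambda_j=0$, producing precisely $\delta_{\mathfrak{Z}}(\Sigma\zeta)$ (which disappears when $M$ is invertible). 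For the scalar factor, each nonzero $\lambda_j$ contributes the pair $\pm\sqrt{\lambda_j}$ to the spectrum, so the pseudo-determinant is $|\det(\cM_\cA)|=\prod_{\lambda_j\neq0}\lambda_j$, whence
\[
	|\det(\cM_\cA)|^{-1/2}=\Big(\prod_{\lambda_j\neq0}\lambda_j\Big)^{-1/2}=|\det(M)|^{-1},
\]
the last equality being the identification of $|\det(M)|$ with the product of the nonzero singular values of $M$; when $M$ is invertible, $\prod_j\lambda_j=(\det M)^2$ and this reduces to the usual $|\det(M)|^{-1}$.

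The step demanding care is the vanishing of the phase factor $s$ of Lemma \ref{lemmaFPhiC}, which is what makes \eqref{FPhiC2cor} cleaner than the generic formula \eqref{FPhiC2}. Tracking the one-dimensional Fresnel integrals in the proof of Lemma \ref{lemmaFPhiC}, $s$ equals the product $\prod_{\mu_k\neq0}e^{i\frac{\pi}{4}\operatorname{sgn}(\mu_k)}$ over the nonzero eigenvalues $\mu_k$ of $\cM_\cA$, i.e. $e^{i\frac{\pi}{4}\,\mathrm{sign}(\cM_\cA)}$ with $\mathrm{sign}$ the signature. Because the nonzero eigenvalues occur in opposite-sign pairs $\pm\sqrt{\lambda_j}$, the signature of $\cM_\cA$ is $0$ and the pairwise contributions $e^{i\pi/4}e^{-i\pi/4}=1$ cancel, forcing $s=1$; this is exactly why no phase factor survives in \eqref{FPhiC2cor}. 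Assembling the three ingredients then yields \eqref{FPhiC2cor}. Finally, in the invertible case $\mathfrak{Z}=\emptyset$ and $\cM_\cA^{-}=\cM_\cA^{-1}$, and evaluating the chirp at $\zeta=(x,\omega)$ through the explicit block inverse of $\cM_\cA$ recovers \eqref{thM}, closing the verification. The main obstacle throughout is precisely this bookkeeping of signs and the signature cancellation, together with the correct reading of the pseudo-determinant convention for $|\det(M)|$ in the degenerate case.
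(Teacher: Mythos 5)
You follow exactly the paper's route: the paper's own proof likewise observes that $\eig(\cM_\cA)=\{\pm\sqrt{\lambda_j}\}$ with $\lambda_j\in\eig(MM^T)$, fixes the orthogonal diagonalization with the prescribed ordering, and then invokes Lemma \ref{lemmaFPhiC} via formula \eqref{FPhiC2}. In fact your write-up is more careful than the paper's on two points it passes over in silence: the replacement of $\cF^{-1}$ by $\cF$ (legitimate, by evenness of the chirp), and the absence of any phase factor in \eqref{FPhiC2cor} --- your observation that $s=e^{i\frac{\pi}{4}\mathrm{sign}(\cM_\cA)}$ and that the signature vanishes because the nonzero eigenvalues occur in opposite pairs $\pm\sqrt{\lambda_j}$ is precisely the justification the paper omits.

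There is, however, one sign you did not track, and it is the only one that matters. After the evenness step you are applying Lemma \ref{lemmaFPhiC} to $C=-\cM_\cA$, and the chirp the lemma returns is $\Phi_{-C^{-}}=\Phi_{-(-\cM_\cA)^{-}}=\Phi_{+\cM_\cA^{-}}$, not $\Phi_{-\cM_\cA^{-}}$; so ``assembling the three ingredients'' yields \eqref{FPhiC2cor} with $+\cM_\cA^{-}$ in place of $-\cM_\cA^{-}$. This is not pedantry, because your two closing claims are incompatible with each other: in the invertible case $\Phi_{+\cM_\cA^{-1}}(x,\omega)=e^{2\pi i M^{-1}x\cdot\omega}$, which is what a direct computation of $\cF_\sigma\chi_M$ gives and which does recover \eqref{thM} for symmetric $M$ (e.g.\ $M=(\tau-\tfrac12)I_{d\times d}$, matching \eqref{kerneltau}), whereas $\Phi_{-\cM_\cA^{-1}}(x,\omega)=e^{-2\pi i M^{-1}x\cdot\omega}$ does not. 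The computation therefore supports the plus sign, and the minus sign in the stated \eqref{FPhiC2cor} appears to be a typo --- one which the paper's own three-line proof glosses over in the same way, and which carrying out your final assembly explicitly would have exposed. A smaller caveat of the same kind: $|\det(\cM_\cA)|^{1/2}$ equals the product of the nonzero \emph{singular} values of $M$, which coincides with the paper's eigenvalue-based pseudo-determinant $|\det(M)|$ only for invertible (or normal) $M$; your reading is the one that makes the formula true, but it silently replaces the convention stated in Section 2.
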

\begin{proof}
	Since $\cM_\cA$ has block decomposition (\ref{MA}), its eigenvalues are $\pm\sqrt{\lambda_j}$, where $\lambda_j$ is the $j$-th eigenvalue of $MM^T$. Let us decompose $\cM_\cA=\Sigma\Delta_{\cM_\cA}\Sigma^T$, with $\Delta_{\cM_\cA}=\diag(\eig({\cM_\cA}))$, in such a way that the eigenvalues of $\cM_\cA$ are ordered as in the assumptions, i.e. $\sqrt{\lambda_1},\ldots,\sqrt{\lambda_d},-\sqrt{\lambda_1},\ldots,-\sqrt{\lambda_d}$. The assertion follows by (\ref{FPhiC2}), expressing the chirp with \eqref{nn1}.
\end{proof}

Recall that if $M\in \bR^{d\times d}$, then $M_{j\ast}$ denotes the $j$-th row of $M$.

\begin{theorem}\label{thmFond}
Let $W_\cA$ be a metaplectic Wigner distribution with $\cA=\pi^{Mp}(\hat\cA)$ having block decomposition (\ref{blockDecA}). Let $A_{13}^{-}$ be the generalized inverse of $A_{13}=\Sigma^T\Delta_{A_{13}}\Sigma$, as in Definition \ref{defGenInv}. Let $\eig(A_{13})=\{\lambda_1,\ldots,\lambda_d\}$ and define $\mathfrak{Z}=\{j:\lambda_j=0\}$. Assume $d\geq2$,  $\mathfrak{Z}\not=\emptyset$ and $\mathfrak{Z}\not=\{\lambda_1,\ldots,\lambda_d\}$, and write $\mathfrak{Z}_1=\{j\in\mathfrak{Z}:(\Sigma A_{11})_{j\ast}=0\}$ and $\mathfrak{Z}_2=\{j\in\mathfrak{Z}:(\Sigma(I_{d\times d}-A_{11}))_{j\ast}=0\}$.  The following statements are equivalent:\\
(i) $W_\cA(f,g)=V_\phi f\overline{V_\psi g}$ for every $f,g\in\cS(\rd)$.\\
(ii) The block decomposition of $\cA$ satisfies
	\begin{equation}\label{finalS}
		\begin{cases}
			A_{13}, A_{21} \qquad \text{symmetric},\\
			A_{21}=A_{11}^TA_{13}^{-}(I_{d\times d}-A_{11}),\\
			(\Sigma(I_{d\times d}-A_{11}))_{j\ast}=0 \quad \vee \quad (\Sigma A_{11})_{j\ast}=0 & \text{if $j\in\mathfrak{Z}$},
		\end{cases}
	\end{equation}
	whereas $\mathfrak{Z}=\mathfrak{Z}_1\cup\mathfrak{Z}_2$ and, in this case, the union is disjoint and 
	\begin{equation}\label{relWindows}\begin{split}
		&\psi(x)=c_1e^{i\pi A_{13}^{-}(I_{d\times d}-A_{11})x\cdot x}\delta_{\mathfrak{Z}_1}(\Sigma(I_{d\times d}-A_{11})x), \\
		&\phi(y)= c_2e^{-i\pi A_{11}^TA_{13}^{-}y\cdot y}\overline{\delta_{\mathfrak{Z}_2}(-\Sigma A_{11}y)},
		\end{split}
	\end{equation} 
	for suitable constants $c_1,c_2$. {In \eqref{relWindows} if $\mathfrak{Z}_1=\emptyset$ we understand $\delta_{\mathfrak{Z}_1}=1$, if $\mathfrak{Z}_2=\emptyset$ then $\delta_{\mathfrak{Z}_2}=1$.} The cases  $\mathfrak{Z}=\emptyset$ or $\mathfrak{Z}=\{\lambda_1,\ldots,\lambda_d\}$, $d\geq1$, will be treated apart in the subsequent Corollaries  \ref{cor4.8} and \ref{cor4.8bis}. 
\end{theorem}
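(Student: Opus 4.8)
The plan is to reduce assertion (i) to a single identity between the Cohen kernel of $W_\cA$ and a cross-Wigner distribution, and then to solve that identity by diagonalizing $A_{13}$. First I would record that, by \eqref{gspW}, every generalized spectrogram lies in the Cohen class with Cohen kernel $W(\cI\psi,\cI\phi)$; hence (i) forces $W_\cA$ into the Cohen class, so that $\cA$ has the covariant block form \eqref{A-covariant} (in particular $A_{13},A_{21}$ are symmetric) and $W_\cA(f,g)=W(f,g)\ast\cF^{-1}(e^{-i\pi\cM_\cA\zeta\cdot\zeta})$ with $\cM_\cA$ as in \eqref{defBA}. Since the Cohen correspondence $\kappa\mapsto\kappa\ast W(\cdot,\cdot)$ is injective (test on Gaussian pairs, for which $W(f,g)$ is a nonvanishing Gaussian), (i) is equivalent to the requirement that $A_{13},A_{21}$ be symmetric together with the kernel identity
\begin{equation*}
	\cF^{-1}(e^{-i\pi\cM_\cA\zeta\cdot\zeta})=W(\cI\psi,\cI\phi).
\end{equation*}
Thus the whole theorem becomes the question: for which blocks $A_{11},A_{13},A_{21}$ is this chirp realizable as a cross-Wigner distribution, and with which $\phi,\psi$?

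Next I would bring $A_{13}$ into diagonal form. Writing $A_{13}=\Sigma^T\Delta_{A_{13}}\Sigma$ as in \eqref{decompC} and conjugating by the (metaplectic) rotation $\mathfrak{T}_\Sigma$ of Example \ref{es22}(ii), which leaves the class of generalized spectrograms invariant and transforms the blocks compatibly (the conditions on the rows of $\Sigma A_{11}$ and $\Sigma(I_{d\times d}-A_{11})$ being invariant under this conjugation), I may assume $A_{13}=\Delta_{A_{13}}$ is diagonal. Lemma \ref{lemmaFPhiC} then gives the left-hand side explicitly: in the directions $j\notin\mathfrak{Z}$ (where $\lambda_j\neq0$) it is a genuine Gaussian chirp governed by $A_{13}^{-}$, while each direction $j\in\mathfrak{Z}$ contributes a factor $\delta_0$, i.e.\ a Dirac mass in that coordinate. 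On the right-hand side I would compute $W(\cI\psi,\cI\phi)$ for the generalized chirps allowed by \eqref{relWindows}, namely products of Gaussian chirps with partial Dirac masses: expanding $u(x+\tfrac t2)\overline{v(x-\tfrac t2)}$ for chirp windows $u=\cI\psi$, $v=\cI\phi$ and Fourier transforming in $t$ produces a phase-space chirp whose $x\cdot x$, $\xi\cdot\xi$ and mixed blocks are explicit quadratic expressions in the window matrices, again decorated with Dirac factors in the degenerate coordinates.

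Matching the two sides block by block is then the core of the argument. The $\xi\cdot\xi$-block pins down the Gaussian part through $A_{13}^{-}$; the mixed and $x\cdot x$-blocks force $A_{21}=A_{11}^TA_{13}^{-}(I_{d\times d}-A_{11})$ and determine the chirp exponents of $\phi$ and $\psi$ in \eqref{relWindows}. The delicate point is the behaviour in the degenerate directions $j\in\mathfrak{Z}$: a Dirac factor in coordinate $j$ can only be produced by placing a Dirac mass in \emph{exactly one} of $u$ or $v$, which (after unwinding the shifts $x\pm t/2$) is possible precisely when the corresponding row of $\Sigma A_{11}$ or of $\Sigma(I_{d\times d}-A_{11})$ vanishes; this yields the dichotomy in \eqref{finalS}, the disjoint splitting $\mathfrak{Z}=\mathfrak{Z}_1\sqcup\mathfrak{Z}_2$, and the assignment of $\delta_{\mathfrak{Z}_1}$ to $\psi$ and $\delta_{\mathfrak{Z}_2}$ to $\phi$. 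This degenerate-direction bookkeeping — showing both that the vanishing-row condition is necessary for a product factorization and that the two candidate masses cannot coexist, so that the union is disjoint — is where I expect the main difficulty. For the converse (ii)$\Rightarrow$(i) I would simply insert the windows \eqref{relWindows} into $W(\cI\psi,\cI\phi)$ and verify, using Lemma \ref{lemmaFPhiC} and $A_{21}=A_{11}^TA_{13}^{-}(I_{d\times d}-A_{11})$, that the result equals $\cF^{-1}(e^{-i\pi\cM_\cA\zeta\cdot\zeta})$, whence $V_\phi f\overline{V_\psi g}=W_\cA(f,g)$.
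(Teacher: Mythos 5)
Your opening reduction is exactly the paper's first step: by \eqref{gspW} and the Cohen--class characterization \eqref{covMWD}, statement (i) is equivalent, for some windows, to the kernel identity $W(\cI\psi,\cI\phi)=\cF^{-1}(e^{-i\pi\cM_\cA\zeta\cdot\zeta})$ (your Gaussian-pair justification of the injectivity of $\kappa\mapsto W(f,g)\ast\kappa$ is a nice explicit touch, and the orthogonal-conjugation reduction, though unnecessary, is harmless). The genuine gap is in the matching step that follows. You assert that the left-hand side, i.e.\ the phase-space kernel $\cF^{-1}(e^{-i\pi\cM_\cA\zeta\cdot\zeta})$, carries a factor $\delta_0$ in each coordinate $j\in\mathfrak{Z}$. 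This is false: that kernel is the (inverse) Fourier transform of a chirp whose matrix is the full $2d\times 2d$ matrix $\cM_\cA$ of \eqref{defBA}, so its Dirac directions are governed by $\ker\cM_\cA$, not by $\ker A_{13}$, and the two are unrelated in general. For the Rihaczek case $\tau=0$ (which \emph{is} a generalized spectrogram) one has $A_{13}=0_{d\times d}$, hence $\mathfrak{Z}=\{1,\ldots,d\}$, yet $\cM_\cA=-\tfrac12 L$ is invertible and the kernel is the smooth chirp \eqref{kerneltau}. A mixed example lying inside the hypotheses of Theorem \ref{thmFond} is $d=2$, $A_{13}=\diag(1,0)$, $A_{11}=\diag(1/2,1)$, $A_{21}=\diag(1/4,0)$: here (ii) holds and $\mathfrak{Z}=\{2\}$, but $\cM_\cA$ is invertible, so the kernel has no Dirac factor at all --- the Dirac mass sits in the \emph{window} $\phi(y)=c_2e^{-i\pi y_1^2/2}\,\delta_0(y_2)$, not in the kernel. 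Consequently your ``block matching'' compares objects living in different places; moreover, on the nondegenerate directions the Gaussian part of the kernel is controlled by blocks of $\cM_\cA^{-1}$ (Schur complements), not directly by $A_{13}^{-}$, so the claimed identification of the $\xi\cdot\xi$-block with $A_{13}^{-}$ also fails.

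The reason the theorem's conditions are nevertheless phrased through $A_{13}$, $\Sigma$ and $\mathfrak{Z}$ is that the paper \emph{inverts the Wigner transform before} computing anything: applying $\cI\mathfrak{T}_W^{-1}\cF_2^{-1}$ to both sides of the kernel identity yields the explicit formula
\begin{equation*}
\psi(x)\overline{\phi(y)}=\Phi_{A_{21}}(x-y)\,\cF\Phi_{-A_{13}}\bigl((I_{d\times d}-A_{11})x+A_{11}y\bigr),
\end{equation*}
because the partial Fourier transform in the $\xi$-variable only sees the block $A_{13}$ of $\cM_\cA$. Only at this stage does Lemma \ref{lemmaFPhiC}, applied to the $d\times d$ matrix $A_{13}$, produce chirps in the directions $j\notin\mathfrak{Z}$ and factors $\delta_0\bigl((\Sigma(I_{d\times d}-A_{11}))_{j\ast}\cdot x-(\Sigma A_{11})_{j\ast}\cdot y\bigr)$ for $j\in\mathfrak{Z}$; the conditions \eqref{finalS}, the disjointness $\mathfrak{Z}=\mathfrak{Z}_1\sqcup\mathfrak{Z}_2$ (from invertibility of $\Sigma$: both rows cannot vanish simultaneously), and the window formulas \eqref{relWindows} then all drop out of the single requirement that this explicit expression \emph{decouple} into a tensor product in $x$ and $y$. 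This inversion also repairs the second soft spot of your plan: in the direction (i)$\Rightarrow$(ii) you may not assume the windows are ``generalized chirps allowed by \eqref{relWindows}'' --- their form must be derived, and the inversion delivers it for free, since $\psi\otimes\bar\phi$ is uniquely determined by the kernel. So your frame is right, but the matching step must be replaced by the Wigner inversion argument.
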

\begin{proof}
	By (\ref{gspW}) and (\ref{covMWD}), we have that $W_\cA$ is a generalized spectrogram with windows $\phi$ and $\psi$ if and only if
	\[
		W(\cI \psi,\cI\phi)=\cF^{-1}(e^{-i\pi \cM_\cA\zeta\cdot\zeta}).
	\]
This entails
	\begin{align*}
		\psi\otimes\bar \phi &= \cI\widehat{\cA}_{1/2}^{-1}\cF^{-1}(e^{-i\pi \cM_\cA\zeta\cdot\zeta})
		=\cI\mathfrak{T}_W^{-1}\cF_2^{-1}\cF^{-1}(e^{-i\pi \cM_\cA\zeta\cdot\zeta})\\
		&=\cI\mathfrak{T}_W^{-1}\cI_2\cF_1^{-1}(e^{-i\pi \cM_\cA\zeta\cdot\zeta}),
	\end{align*}
	where $\cI_2 F(x,y)=F(x,-y)$. Therefore, using (\ref{TWm1}),
	\begin{align*}
		\psi(x)\overline{\phi(y)}&=\cI\mathfrak{T}_W^{-1}\cI_2\cF_1^{-1}(e^{-i\pi \cM_\cA\zeta\cdot\zeta})(x,y)
		=\mathfrak{T}_W^{-1}\cI_2\cF_1^{-1}(e^{-i\pi \cM_\cA\zeta\cdot\zeta})(-x,-y)\\
		&=\cI_2\cF_1^{-1}(e^{-i\pi \cM_\cA\zeta\cdot\zeta})\left(-\frac{x+y}{2},y-x\right)\\
		&=\cF_1^{-1}(e^{-i\pi \cM_\cA\zeta\cdot\zeta})\left(-\frac{x+y}{2},x-y\right)\\
		&=\int_{\rd}\Phi_{-\cM_\cA}\left(\xi,x-y\right)e^{-2\pi i\frac{x+y}{2}\cdot \xi}d\xi.
	\end{align*}
	Observe that
	\begin{align*}
		\cM_\cA(\xi,x-y)\cdot(\xi,x-y)&=A_{13}\xi\cdot\xi +(I_{d\times d}-2A_{11})(x-y)\cdot\xi-A_{21}(x-y)\cdot(x-y),
	\end{align*}
that gives
	\begin{align*}
		\psi(x)\overline{\phi(y)}&=\Phi_{A_{21}}(x-y)\int_{\rd}\Phi_{-A_{13}}(\xi)e^{-\pi i(I_{d\times d}-2A_{11})(x-y)\cdot\xi}e^{-2\pi i\frac{x+y}{2}\cdot\xi}d\xi\\
		&= \Phi_{A_{21}}(x-y) \int_{\rd}\Phi_{-A_{13}}(\xi)e^{-2\pi i(\frac{x-y}{2}-A_{11}(x-y)+\frac{x+y}{2})\cdot\xi}d\xi \\
		&= \Phi_{A_{21}}(x-y) \int_{\rd}\Phi_{-A_{13}}(\xi)e^{-2\pi i((I_{d\times d}-A_{11})x+A_{11}y)\cdot\xi} d\xi\\
		&=\Phi_{A_{21}}(x-y)\cF\Phi_{-A_{13}}((I_{d\times d}-A_{11})x+A_{11}y).
	\end{align*}
	Let $A_{13}=\Sigma^T\Delta_{A_{13}}\Sigma$, where $\Sigma\in\bR^{d\times d}$ is orthogonal and $\Delta_{A_{13}}=\diag(\lambda_1,\ldots,\lambda_d)$ is the diagonal matrix whose diagonal elements are the eigenvalues $\lambda_1,\ldots,\lambda_d$ of $A_{13}$. Let $A_{13}^{-}=\Sigma^T\Delta_{A_{13}}^{-}\Sigma$ be its generalized inverse. Applying Lemma \ref{lemmaFPhiC} to $C=-A_{13}$,with $s$ being the phase factor,
	\begin{align}
	\nonumber
	\cF\Phi_{-A_{13}}((I_{d\times d}-A_{11})x+A_{11}y)&=s|\det(A_{13})|^{-1/2}\left[\bigotimes_{j=1}^d\varphi_j\right](\Sigma(I_{d\times d}-A_{11})x+A_{11}y)\\
	\label{varphi}
	&=s|\det(A_{13})|^{-1/2}\prod_{j=1}^d\varphi_j((\Sigma((I_{d\times d}-A_{11})x+A_{11}y))_j),
	\end{align}
	where
	\[
		\varphi_j=\begin{cases}
			\Phi_{\lambda_j^{-1}}& \text{if $\lambda_j\neq0$},\\
			\delta_0 & \text{otherwise}
		\end{cases}
	\]
	and $(\Sigma((I_{d\times d}-A_{11})x+A_{11}y))_j$ is the $j$-th coordinate of $\Sigma((I_{d\times d}-A_{11})x+A_{11}y)$. Observe that if $\lambda_j=0$ for every $j=1,\ldots,d$, only the second expression of $\varphi_j$ appears in (\ref{varphi}). 
	
	Let $\mathfrak{Z}:=\{j  : \lambda_j=0\}$. If $j\notin\mathfrak{Z}$, 
	\begin{align*}
		\varphi_j((\Sigma((I_{d\times d}-A_{11})x+A_{11}y))_j)=e^{i\pi\lambda_j^{-1}(\Sigma((I_{d\times d}-A_{11})x+A_{11}y))_j\cdot(\Sigma((I_{d\times d}-A_{11})x+A_{11}y))_j}.
	\end{align*}
	Therefore, 
	\begin{align*}
		\prod_{j\notin\mathfrak{Z}}&\varphi_j((\Sigma((I_{d\times d}-A_{11})x+A_{11}y))_j)\\
		&=\prod_{j\notin\mathfrak{Z}}e^{i\pi\lambda_j^{-1}(\Sigma((I_{d\times d}-A_{11})x+A_{11}y))_j\cdot(\Sigma((I_{d\times d}-A_{11})x+A_{11}y))_j}\\
		&=e^{i\pi(I_{d\times d}-A_{11})^T\Sigma^T\Delta_{A_{13}}^{-}\Sigma(I_{d\times d}-A_{11})x\cdot x}e^{i\pi A_{11}^T\Sigma^T\Delta_{A_{13}}^{-}\Sigma A_{11}y\cdot y}e^{2\pi iA_{11}^T\Sigma^T\Delta_{A_{13}}^{-}\Sigma(I_{d\times d}-A_{11})x\cdot y}.
	\end{align*}
	On the contrary, if $j\in\mathfrak{Z}$,
	\begin{align*}
		\varphi_j((\Sigma((I_{d\times d}-A_{11})x+A_{11}y))_j)&=\delta_0((\Sigma((I_{d\times d}-A_{11})x+A_{11}y))_j)\\
		&=\delta_0((\Sigma(I_{d\times d}-A_{11}))_{j\ast}\cdot x-(\Sigma A_{11})_{j\ast}\cdot y),
	\end{align*}
	where $(\Sigma(I_{d\times d}-A_{11}))_{j\ast}$ denotes the $j$-th row of $\Sigma(I_{d\times d}-A_{11})$, and similarly for $(\Sigma A_{11})_{j\ast}$.
	
	It follows that
	\begin{align*}
		\psi(x)\overline{\phi(y)}&=s|\det(A_{13})|^{-1/2}e^{\pi iA_{21}x\cdot x}e^{-2\pi iA_{21}x\cdot y}e^{i\pi A_{21}y\cdot y}e^{i\pi(I_{d\times d}-A_{11})^T\Sigma^T\Delta_{A_{13}}^{-}\Sigma(I_{d\times d}-A_{11})x\cdot x}\\
		&\qquad\quad\times \quad e^{i\pi A_{11}^T\Sigma^T\Delta_{A_{13}}^{-}\Sigma A_{11}y\cdot y}e^{2\pi iA_{11}^T\Sigma^T\Delta_{A_{13}}^{-}\Sigma(I_{d\times d}-A_{11})x\cdot y}\\
		&\quad\qquad\times\quad \prod_{j\in\mathfrak{Z}}\delta_0((\Sigma(I_{d\times d}-A_{11}))_{j\ast}\cdot x-(\Sigma A_{11})_{j\ast}\cdot y)\\
		&=s|\det(A_{13})|^{-1/2}e^{i\pi (A_{21}+(I_{d\times d}-A_{11})^TA_{13}^{-}(I_{d\times d}-A_{11}))x\cdot x}e^{i\pi(A_{21}+A_{11}^TA_{13}^{-}A_{11})y\cdot y}\\
		&\qquad\quad\times\quad e^{2\pi i(-A_{21}+A_{11}^TA_{13}^{-}(I_{d\times d}-A_{11}))x\cdot y}\\
		&\qquad\quad\times\quad \prod_{j\in\mathfrak{Z}}\delta_0((\Sigma(I_{d\times d}-A_{11}))_{j\ast}\cdot x-(\Sigma A_{11})_{j\ast}\cdot y).
	\end{align*}
	Since the only way to decouple the variables is having $A_{21}-A_{11}^TA_{13}^{-}(I_{d\times d}-A_{11})=0_{d\times d}$ and, for $j\in\mathfrak{Z}$, having either $(\Sigma(I_{d\times d}-A_{11}))_{j\ast}=0$ or $(\Sigma A_{11})_{j\ast}=0$, this shows that $(i)$ is equivalent to the validity of $\mathfrak{Z}=\mathfrak{Z}_1\cup\mathfrak{Z}_2$. 
	This union is disjoint, since if $j\in\mathfrak{Z}_1\cap\mathfrak{Z}_2$, then $(\Sigma A_{11})_{\ast j}=0$ and $(\Sigma(I_{d\times d}-A_{11}))_{\ast j}=\Sigma_{\ast j}=0$, which contradicts the invertibility of $\Sigma$.
	 
	 If $\mathfrak{Z}_1,\mathfrak{Z}_2\neq\emptyset$, then we retrieve (\ref{finalS}) with:
	\begin{align*}
		&\psi(x)=c_1e^{i\pi (A_{21}+(I_{d\times d}-A_{11})^TA_{13}^{-}(I-A_{11}))x\cdot x}{\prod_{j\in\mathfrak{Z}_1}}\delta_0((\Sigma(I_{d\times d}-A_{11}))_{j\ast}\cdot x)\\
		&\phi(y)=c_2e^{-i\pi(A_{21}+A_{11}^TA_{13}^{-}A_{11})y\cdot y}{\prod_{j\in\mathfrak{Z}_2}}\overline{\delta_0}(-(\Sigma A_{11})_{j\ast}\cdot y).
	\end{align*} 
	 Otherwise, if $\mathfrak{Z}_1=\emptyset$, then $\mathfrak{Z}=\mathfrak{Z}_2$ and the product in the definition of $\psi$ does not appear. The case  $\mathfrak{Z}_2=\emptyset$ is similar. Observe that, if the relations in (\ref{finalS}) hold, then
	\begin{align*}
		A_{21}+(I_{d\times d}-A_{11})^TA_{13}^{-}(I_{d\times d}-A_{11})&=A_{21}+A_{13}^{-}(I_{d\times d}-A_{11})-A_{11}^TA_{13}^{-}(I_{d\times d}-A_{11})\\
		&=A_{13}^{-}(I_{d\times d}-A_{11})
	\end{align*}
	and
	\begin{align*}
		A_{21}+A_{11}^TA_{13}^{-}A_{11}=A_{21}-A_{11}^TA_{13}^{-}(I_{d\times d}-A_{11})+A_{11}^TA_{13}^{-}=A_{11}^TA_{13}^{-}.
	\end{align*}
	Therefore, (\ref{relWindows}) follows and the theorem is proved.
\end{proof}

\begin{corollary}\label{cor4.8}
Under the notation of Theorem \ref{thmFond}, assuming that $A_{13}\in GL(d,\bR)$, the following statements are equivalent:\\
(i) $W_\cA(f,g)=V_\phi f\overline{V_\psi g}$ for every $f,g\in\cS(\rd)$. \\
(ii) The block decomposition of $\cA$ satisfies:
\[
	\begin{cases}
		A_{13}, A_{21} \quad \text{symmetric},\\
		A_{21}=A_{11}^T A_{13}^{-1}(I_{d\times d}-A_{11}),\\
	\end{cases}
\]
and for suitable constants $c_1,c_2$, 
\begin{align*}
	&\psi(x)=c_1 \Phi_{A_{13}^{-1}(I_{d\times d}-A_{11})}(x),\\
	&\phi(y)=c_2\Phi_{-A_{11}^TA_{13}^{-1}}(y).
\end{align*}
\end{corollary}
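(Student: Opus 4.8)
The corollary is the specialization of Theorem \ref{thmFond} to the case $\mathfrak{Z}=\emptyset$, i.e.\ $A_{13}$ invertible, so the plan is to re-run the computation in the proof of Theorem \ref{thmFond} and record how it collapses when $A_{13}$ has no zero eigenvalues. First I would recall that, by \eqref{gspW} and \eqref{covMWD}, $W_\cA$ is a generalized spectrogram with windows $\phi,\psi$ if and only if $W(\cI\psi,\cI\phi)=\cF^{-1}(e^{-i\pi\cM_\cA\zeta\cdot\zeta})$; in particular this forces $W_\cA$ into the Cohen class, so $\cA$ must be of the form \eqref{A-covariant}, which is precisely where the symmetry requirements $A_{13}^T=A_{13}$ and $A_{21}^T=A_{21}$ originate.

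Next I would follow verbatim the chain of metaplectic identities in the proof of Theorem \ref{thmFond}---applying $\cI\mathfrak{T}_W^{-1}\cI_2\cF_1^{-1}$ to $e^{-i\pi\cM_\cA\zeta\cdot\zeta}$ and using \eqref{TWm1}---to reach
\[
\psi(x)\overline{\phi(y)}=\Phi_{A_{21}}(x-y)\,\cF\Phi_{-A_{13}}\big((I_{d\times d}-A_{11})x+A_{11}y\big).
\]
At this point the only difference from the general case is the evaluation of $\cF\Phi_{-A_{13}}$: since $A_{13}\in GL(d,\bR)$ we have $\mathfrak{Z}=\emptyset$, so Lemma \ref{lemmaFPhiC} gives the classical Gaussian-chirp formula $\cF\Phi_{-A_{13}}=s|\det(A_{13})|^{-1/2}\Phi_{A_{13}^{-1}}$ with no Dirac factor $\delta_{\mathfrak{Z}}$, and the generalized inverse reduces to the ordinary inverse $A_{13}^{-}=A_{13}^{-1}$.

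Then I would expand the quadratic form $A_{13}^{-1}\big((I_{d\times d}-A_{11})x+A_{11}y\big)\cdot\big((I_{d\times d}-A_{11})x+A_{11}y\big)$, combine the resulting exponentials with $\Phi_{A_{21}}(x-y)$, and collect the pure $x\cdot x$, pure $y\cdot y$, and mixed $x\cdot y$ contributions exactly as in Theorem \ref{thmFond}. The mixed term is $e^{2\pi i(-A_{21}+A_{11}^TA_{13}^{-1}(I_{d\times d}-A_{11}))x\cdot y}$, and the product $\psi(x)\overline{\phi(y)}$ separates into a function of $x$ times a function of $y$ if and only if this cross term vanishes, i.e.\ if and only if $A_{21}=A_{11}^TA_{13}^{-1}(I_{d\times d}-A_{11})$. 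Reading off the surviving $x\cdot x$ and $y\cdot y$ chirps and simplifying with the two identities $A_{21}+(I_{d\times d}-A_{11})^TA_{13}^{-1}(I_{d\times d}-A_{11})=A_{13}^{-1}(I_{d\times d}-A_{11})$ and $A_{21}+A_{11}^TA_{13}^{-1}A_{11}=A_{11}^TA_{13}^{-1}$ (already established at the end of the proof of Theorem \ref{thmFond}) yields the stated windows $\psi=c_1\Phi_{A_{13}^{-1}(I_{d\times d}-A_{11})}$ and $\phi=c_2\Phi_{-A_{11}^TA_{13}^{-1}}$, giving the equivalence of $(i)$ and $(ii)$.

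Since $\mathfrak{Z}=\emptyset$ kills every Dirac-delta factor, there is essentially no obstacle here beyond bookkeeping: the decoupling analysis of Theorem \ref{thmFond} loses its case split over $j\in\mathfrak{Z}$ and reduces to the single matrix equation for $A_{21}$. The only point deserving a line of care is confirming that the factorization is genuinely equivalent to the vanishing of the cross term---so that $(i)\Rightarrow(ii)$ and not merely $(ii)\Rightarrow(i)$---which holds because a bilinear exponential $e^{2\pi i Rx\cdot y}$ with $R\neq0_{d\times d}$ cannot be written as a product of a function of $x$ alone and a function of $y$ alone.
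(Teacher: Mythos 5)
Your proposal is correct and follows essentially the same route as the paper: specialize the computation in the proof of Theorem \ref{thmFond} to $\mathfrak{Z}=\emptyset$, evaluate $\cF\Phi_{-A_{13}}$ by the classical chirp formula (to which Lemma \ref{lemmaFPhiC} reduces when $A_{13}$ is invertible), and observe that decoupling of $\psi(x)\overline{\phi(y)}$ is equivalent to the vanishing of the cross term, which yields $A_{21}=A_{11}^TA_{13}^{-1}(I_{d\times d}-A_{11})$ and the stated chirp windows. Your closing remark justifying the ``only if'' direction (a nonconstant bilinear exponential $e^{2\pi iRx\cdot y}$ cannot factor as $a(x)b(y)$) is a point the paper leaves implicit, but it is the same argument, just spelled out.
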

\begin{proof}
	It follows by the proof of Theorem \ref{thmFond}, observing that if $A_{13}\in GL(d,\bR)$, then $\mathfrak{Z}=\emptyset$, $A_{13}^-=A_{13}^{-1}$ and the last condition in (\ref{finalS}) is empty. In fact, \eqref{varphi} reads 
	\[
		\cF\Phi_{-A_{13}}((I_{d\times d}-A_{11})x+A_{11}y)=s|\det(A_{13})|^{-1/2}\Phi_{A_{13}^{-1}}(I_{d\times d}x-A_{11}x+A_{11}y)
	\]
	by applying directly the classical formula \eqref{ft-chirp}. Hence,
	\[
	\psi(x)\overline{\varphi(y)}=s|\det(A_{13})|^{-1/2}\Phi_{A_{21}}(x-y)\Phi_{A_{13}^{-1}}(I_{d\times d}x-A_{11}x+A_{11}y),
	\]
	and, in this case, the decoupling of the variables is equivalent to
	\[
		A_{21}-A_{11}^TA_{13}^{-1}(I_{d\times d}-A_{11})=0_{d\times d}.
	\]
	Finally, 
	\[
		\psi(x)=c_1e^{i\pi A_{13}^{-1}(I_{d\times d}-A_{11})x\cdot x}
	\] 
	and
	\[
		\phi(y)=c_2e^{-i\pi A_{11}^TA_{13}^{-1}y\cdot y}.
	\]
\end{proof}

\begin{corollary}\label{cor4.8bis}
	Under the notation of Theorem \ref{thmFond}, assuming $\mathfrak{Z}=\{1,\dots,d\}$ for $A_{13}$, i.e., $\lambda_j=0$ for every $j=1,\dots,d$. Let  $\mathfrak{Z}_1, \mathfrak{Z}_2$ be defined as in Theorem \ref{thmFond}. Then, the following statements are equivalent:\\
	(i) $W_\cA(f,g)=V_\phi f\overline{V_\psi g}$ for every $f,g\in\cS(\rd)$. \\
	(ii) 
	\[
	(\Sigma(I_{d\times d}-A_{11}))_{j\ast}=0\quad \mbox{or}\quad (\Sigma A_{11})_{j\ast}=0,\quad\mbox{for}\,\, j=1,\dots,d,
	\]
 and
	\begin{align*}
		&\psi(x)= e^{i\pi A_{21}x\cdot x}\delta_{\mathfrak{Z}_1}(\Sigma(I_{d\times d}-A_{11})x),\\
		&\phi(y)=e^{-i\pi A_{21}y\cdot y}\overline{\delta_{\mathfrak{Z}_2}(-\Sigma A_{11}y)},
	\end{align*}
where we understand $\delta_{\mathfrak{Z}_1}=1$ if $\mathfrak{Z}_1=\emptyset$,  $\delta_{\mathfrak{Z}_2}=1$ if $\mathfrak{Z}_2=\emptyset$.
\end{corollary}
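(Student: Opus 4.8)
The plan is to rerun the computation in the proof of Theorem \ref{thmFond} and then specialize it to the fully degenerate case $A_{13}=0$, where the Gaussian chirp collapses to a constant and its Fourier transform becomes a full Dirac delta. The first part of that proof, which does not use the hypotheses on $\mathfrak{Z}$, reduces the generalized-spectrogram identity $W_\cA(f,g)=V_\phi f\,\overline{V_\psi g}$ (via \eqref{gspW} and \eqref{covMWD}) to
\[
\psi(x)\overline{\phi(y)}=\Phi_{A_{21}}(x-y)\,\cF\Phi_{-A_{13}}\bigl((I_{d\times d}-A_{11})x+A_{11}y\bigr).
\]
Setting $A_{13}=0$ makes $\Phi_{-A_{13}}\equiv1$; in Lemma \ref{lemmaFPhiC} the signature phase $s$ and the pseudo-determinant factor $|\det A_{13}|^{-1/2}$ are then empty products equal to $1$, and $\cF\Phi_{-A_{13}}=\cF 1=\delta_0$ on $\rd$. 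Hence the identity becomes
\[
\psi(x)\overline{\phi(y)}=\Phi_{A_{21}}(x-y)\,\delta_0\bigl((I_{d\times d}-A_{11})x+A_{11}y\bigr),
\]
which also explains the absence of multiplicative constants in the statement.

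Next I would factor the vector delta. Writing it as $\prod_{j=1}^{d}\delta_0\bigl((\Sigma(I_{d\times d}-A_{11}))_{j\ast}\cdot x+(\Sigma A_{11})_{j\ast}\cdot y\bigr)$, a tensor-product factorization forces each scalar factor to depend on $x$ alone or on $y$ alone, which is exactly the alternative $(\Sigma A_{11})_{j\ast}=0$ ($j\in\mathfrak{Z}_1$) or $(\Sigma(I_{d\times d}-A_{11}))_{j\ast}=0$ ($j\in\mathfrak{Z}_2$) of (ii); disjointness of $\mathfrak{Z}_1,\mathfrak{Z}_2$ follows from the invertibility of $\Sigma$ exactly as in Theorem \ref{thmFond}. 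Reading these two conditions against the orthonormal rows $\Sigma_{j\ast}$ gives $A_{11}^{T}\Sigma_{j\ast}^{T}=0$ for $j\in\mathfrak{Z}_1$ and $A_{11}^{T}\Sigma_{j\ast}^{T}=\Sigma_{j\ast}^{T}$ for $j\in\mathfrak{Z}_2$, so that $A_{11}$ is automatically symmetric and equals the orthogonal projection $P_{U_2}$ onto $U_2:=\text{span}\{\Sigma_{j\ast}^{T}:j\in\mathfrak{Z}_2\}$, with kernel $U_1:=U_2^{\perp}=\text{span}\{\Sigma_{j\ast}^{T}:j\in\mathfrak{Z}_1\}$; correspondingly the delta factors as $\delta_{\mathfrak{Z}_1}(\Sigma(I_{d\times d}-A_{11})x)\,\delta_{\mathfrak{Z}_2}(-\Sigma A_{11}y)$ and is supported on $\{x\in U_2\}\times\{y\in U_1\}$.

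The delicate step, which I expect to be the main obstacle, is the cross term. Expanding $\Phi_{A_{21}}(x-y)=e^{i\pi A_{21}x\cdot x}\,e^{i\pi A_{21}y\cdot y}\,e^{-2\pi i A_{21}x\cdot y}$, the first two factors restrict on the support to the chirps $e^{i\pi A_{21}x\cdot x}$ (for $x\in U_2$) and $e^{i\pi A_{21}y\cdot y}$ (for $y\in U_1$) of the windows in (ii), while $e^{-2\pi i A_{21}x\cdot y}$ is the genuine coupling of the two variables. As a smooth multiplier against the delta it equals its restriction to the support, so the right-hand side is a true tensor product precisely when $A_{21}x\cdot y=0$ for all $x\in U_2$, $y\in U_1$, i.e. when the off-diagonal compression $P_{U_1}A_{21}P_{U_2}$ is tested and found to vanish on the support. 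This is where the fully degenerate case departs from Theorem \ref{thmFond}: there the whole cross-term coefficient $-A_{21}+A_{11}^{T}A_{13}^{-}(I_{d\times d}-A_{11})$ had to vanish, whereas here every coordinate is annihilated by a factor of the delta, so only the compression of $A_{21}$ across the splitting $\rd=U_1\oplus U_2$ is seen on the support. Once this multiplier is trivial there it drops out, leaving
\[
\psi(x)\overline{\phi(y)}=\bigl[e^{i\pi A_{21}x\cdot x}\,\delta_{\mathfrak{Z}_1}(\Sigma(I_{d\times d}-A_{11})x)\bigr]\bigl[e^{i\pi A_{21}y\cdot y}\,\delta_{\mathfrak{Z}_2}(-\Sigma A_{11}y)\bigr],
\]
and the windows of (ii) are read off by conjugating the $y$-factor, using that $\delta_0$ is real and $A_{21}y\cdot y\in\bR$, so that $e^{i\pi A_{21}y\cdot y}\delta_{\mathfrak{Z}_2}(-\Sigma A_{11}y)=\overline{e^{-i\pi A_{21}y\cdot y}\delta_{\mathfrak{Z}_2}(-\Sigma A_{11}y)}=\overline{\phi(y)}$. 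For the converse I would insert these explicit $\phi,\psi$ into $V_\phi f\,\overline{V_\psi g}$ and run \eqref{gspW} and \eqref{covMWD} backwards to recover $W_\cA$, the conventions $\delta_{\mathfrak{Z}_1}=1$ for $\mathfrak{Z}_1=\emptyset$ (resp. $\delta_{\mathfrak{Z}_2}=1$ for $\mathfrak{Z}_2=\emptyset$) covering the endpoint cases, which contain the Rihaczek distributions $\tau=0,1$.
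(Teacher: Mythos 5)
Your proposal retraces the paper's own route: specialize the computation of Theorem \ref{thmFond} to $A_{13}=0_{d\times d}$, so that the problem becomes deciding when
\[
\Phi_{A_{21}}(x-y)\,\delta_0\bigl((I_{d\times d}-A_{11})x+A_{11}y\bigr)
\]
is a tensor product $\psi(x)\otimes\overline{\phi(y)}$, and to your credit you isolate exactly the point that the paper's proof waves away with the sentence ``the decoupling depends only on the distributional part'': the cross chirp $e^{-2\pi iA_{21}x\cdot y}$ survives multiplication by the deltas as its restriction to the support $U_2\times U_1$ (in your notation $A_{11}=P_{U_2}$, $U_1=U_2^{\perp}$), so, granted the row condition, the display above is a tensor product if and only if additionally $A_{21}x\cdot y=0$ for all $x\in U_2$, $y\in U_1$. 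The gap is that you never discharge this second condition: the sentence ``once this multiplier is trivial there it drops out'' treats $P_{U_1}A_{21}P_{U_2}=0$ as if it followed from (ii), and it does not --- the row condition says nothing about $A_{21}$, and the window formulas in (ii) merely define $\phi,\psi$.

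The failure is concrete. Take $d=2$, $A_{13}=0_{2\times2}$, $A_{11}=\diag(0,1)$, $A_{21}=\begin{pmatrix}0&1\\1&0\end{pmatrix}$: $A_{21}$ is symmetric, so $\cA$ in \eqref{A-covariant} is symplectic and $W_\cA$ is a Cohen-class distribution covered by the corollary. With $\Sigma=I_{2\times2}$ the row condition of (ii) holds, with $\mathfrak{Z}_1=\{1\}$, $\mathfrak{Z}_2=\{2\}$, i.e. $U_2=\mathrm{span}(e_2)$, $U_1=\mathrm{span}(e_1)$; but $A_{21}e_2\cdot e_1=1\neq0$, so by your own criterion the distribution above, which equals $e^{-2\pi ix_2y_1}\,\delta_0(x_1)\,\delta_0(y_2)$, is \emph{not} a tensor product (the Fourier kernel $e^{-2\pi ix_2y_1}$ admits no rank-one factorization), and hence no windows $\phi,\psi\in\cS'(\bR^2)$ satisfy (i), while (ii) holds; in particular the windows prescribed by (ii) give $\psi\otimes\bar\phi=\delta_0(x_1)\otimes\delta_0(y_2)$, which misses the coupling factor. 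So the implication (ii)$\Rightarrow$(i) cannot be proved, and your concluding display contradicts your own intermediate criterion. To be clear, this defect is inherited from the paper: its proof makes the same unjustified claim, and the statement is safe only when the compression condition is vacuous (e.g.\ $d=1$, or one of $\mathfrak{Z}_1,\mathfrak{Z}_2$ empty, as in Corollary \ref{cor4.9}); otherwise $P_{U_1}A_{21}P_{U_2}=0$ must be added to (ii). A smaller caveat concerns necessity, where you (like the paper) assert that a tensor factorization forces each delta factor to depend on one variable only: this is also false in general, since the oblique idempotent $A_{11}=\begin{pmatrix}1&1\\0&0\end{pmatrix}$ with $A_{21}=0$ yields $\delta_0(x_2)\,\delta_0(y_1+y_2)$, a tensor product, although no orthogonal $\Sigma$ satisfies the row condition.
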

\begin{proof}
Again, we refer to the proof of Theorem \ref{thmFond}. Under the assumption $\lambda_j=0$, for every $j=1,\dots,d$, \eqref{varphi} reads as
\[
	\cF\Phi_{-A_{13}}((I_{d\times d}-A_{11})x+A_{11}y)=\prod_{j=1}^d\delta_0((\Sigma((I_{d\times d}-A_{11})x+A_{11}y))_j),
\]
since the part corresponding to the non-zero eigenvalues is missing in the right hand-side of \eqref{varphi}. Thus, we have
	\[
	\psi(x)\overline{\phi(y)}=\Phi_{ A_{21}}(x-y)\prod_{J=1}^{d}\delta_0((\Sigma(I_{d\times d}-A_{11})x+ A_{11}y)_j)\
	\]
	and the equivalence follows by decoupling the variables on the right hand-side. In fact, the decoupling depends only on the distributional part in the right hand-side, and it is possible if and only if $(ii)$ is satisfied.
\end{proof}

\begin{corollary}\label{cor4.9}
	Let $\tau\in\bR$. Then, the (cross-)$\tau$-Wigner distribution $W_\tau$ is a generalized spectrogram if and only if $\tau=0$ or $\tau=1$.
\end{corollary}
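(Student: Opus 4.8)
The plan is to read off the blocks of the symplectic matrix $A_\tau$ in \eqref{Atau} and substitute them into the fully degenerate characterization of Corollary \ref{cor4.8bis}. First I would compare \eqref{Atau} with the block decomposition \eqref{blockDecA}, obtaining $A_{11}=(1-\tau)I_{d\times d}$, $A_{13}=0_{d\times d}$ and $A_{21}=0_{d\times d}$. In fact $A_\tau$ is already of the covariant form \eqref{A-covariant} with these blocks, so $W_\tau$ automatically belongs to the Cohen class and the framework of Theorem \ref{thmFond} and its corollaries applies; in particular $A_{13}$ and $A_{21}$ are (trivially) symmetric.

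Since $A_{13}=0_{d\times d}$, all its eigenvalues vanish, hence $\mathfrak{Z}=\{1,\dots,d\}$ and we land exactly in the fully degenerate regime governed by Corollary \ref{cor4.8bis} (and outside the hypotheses of Theorem \ref{thmFond}, which excludes this case). As $A_{13}=0_{d\times d}$ is diagonalized by every orthogonal matrix, I would take $\Sigma=I_{d\times d}$. Condition (ii) of Corollary \ref{cor4.8bis} then requires that, for each $j=1,\dots,d$, either $(\Sigma(I_{d\times d}-A_{11}))_{j\ast}=0$ or $(\Sigma A_{11})_{j\ast}=0$; with the blocks above these rows are $(\tau I_{d\times d})_{j\ast}$ and $((1-\tau)I_{d\times d})_{j\ast}$. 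The decisive observation is that the $j$-th row of $\tau I_{d\times d}$ vanishes if and only if $\tau=0$, while the $j$-th row of $(1-\tau)I_{d\times d}$ vanishes if and only if $\tau=1$. Hence the required disjunction holds for every $j$ precisely when $\tau=0$ or $\tau=1$, whereas for $\tau\in\bR\setminus\{0,1\}$ both rows are nonzero for all $j$ and the condition fails. By Corollary \ref{cor4.8bis} this yields the stated equivalence.

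Because the argument is in essence a substitution into an already-proved characterization, I anticipate no real obstacle; the only delicate points are the correct reading of the block positions in the nonstandard layout of \eqref{Atau} and the recognition that the fully degenerate case $\mathfrak{Z}=\{1,\dots,d\}$ is handled by Corollary \ref{cor4.8bis} rather than by Theorem \ref{thmFond} itself. As a final consistency check I would evaluate the window formulas of Corollary \ref{cor4.8bis}: for $\tau=0$ one finds $\mathfrak{Z}_1=\emptyset$ and $\mathfrak{Z}_2=\{1,\dots,d\}$, so that $\psi$ is constant and $\phi$ is a multiple of $\delta_0$, recovering the Rihacek distribution \eqref{RD}; symmetrically, $\tau=1$ gives $\psi$ proportional to $\delta_0$ and $\phi$ constant, recovering the conjugate Rihacek distribution \eqref{CRD}. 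This agrees with the result already known from \cite{BdDO2}.
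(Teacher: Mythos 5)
Your proof is correct and follows essentially the same route as the paper: read off $A_{11}=(1-\tau)I_{d\times d}$, $A_{13}=A_{21}=0_{d\times d}$ from \eqref{Atau}, note $\mathfrak{Z}=\{1,\dots,d\}$, and apply Corollary \ref{cor4.8bis}, whose condition $(ii)$ reduces to the vanishing of rows of $\tau\Sigma$ or $(1-\tau)\Sigma$, i.e.\ to $\tau=0$ or $\tau=1$. Your only (harmless) deviation is fixing $\Sigma=I_{d\times d}$, whereas the paper keeps $\Sigma$ generic and uses that an orthogonal matrix has no zero rows; your closing window check recovering the Rihacek distributions is a nice addition not present in the paper's proof.
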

\begin{proof}
	Let $\tau\in\bR$. The matrix $A_\tau$ has block decomposition (\ref{Atau}). Therefore, $A_{13}=A_{21}=0_{d\times d}$ for every $\tau\in\bR$.
	We may then apply Corollary \ref{cor4.8bis}. We prove that  condition $(ii)$ in there   is satisfied if and only if $\tau=0,1$.
	 Since $A_{11}=(1-\tau)I_{d\times d}$, the matrices that appear in $(ii)$ read as $\tau\Sigma$ and $(1-\tau)\Sigma$, respectively, and any row of these matrices can be zero if and only if either $\tau=0$ or $\tau=1$. 
	
	We have $\mathfrak{Z}=\{\text{$j$: the $j$-th eigenvalue of $A_{13}$ vanishes}\}=\{1,\ldots,d\}$. If $\tau=0$, then $(\tau\Sigma)_{j\ast}=0$ for every $j\in\{1,\ldots,d\}=\mathfrak{Z}$. If $\tau=1$, $((1-\tau)\Sigma)_{j\ast}=0$ for every $j\in\{1,\ldots,d\}=\mathfrak{Z}$. For other values of $\tau$ the condition $(ii)$ of Corollary \ref{cor4.8bis} is not satisfied. This concludes the proof.
\end{proof}

Since the STFT is a particular example of $\cA$-Wigner distribution, as recalled in Subsection \ref{2.4}, it is natural to extend the definition of generalized spectrogram in \eqref{spect} to the wider class of metaplectic Wigner distributions.

\begin{definition}\label{defMS} Let $W_\cA$ be a metaplectic Wigner distribution with $\cA=\pi^{Mp}(\hat\cA)$. The \textbf{$\cA$-metaplectic spectrogram} with windows $\phi$ and $\psi$  is defined by
	\[
	{\Sp}^{\cA}_{\phi,\psi}(f,g):=W_\cA(f,\phi)\overline{W_\cA(g,\psi)},
	\]
	for $\phi,\psi,f,g$ in suitable functional or distributional spaces.
\end{definition} 

If we choose $\cA=A_{ST}$ in \eqref{AST}, then $W_\cA(f,\phi)=V_\phi f$, $W_\cB(g,\psi)=V_\psi g$ and we go back to the original definition in \eqref{spect}, namely,  ${\Sp}^{A_{ST}}_{\phi,\psi}={\Sp}_{\phi,\psi}$.\\

If $W_\cA$ is a shift-invertible metaplectic Wigner distribution, using Theorem \ref{CG-thm1} we easily obtain the following result:
\begin{corollary}\label{5.2}
	Let $W_\cA$ be a shift-invertible metaplectic Wigner distribution as in (\ref{S-inv}). Then,
	\[
		{\Sp}^\cA_{\phi,\psi}(f,g)=|\det(E)|{\Sp}_{\hat S\phi,\hat S\psi}(f,g)\circ E,
	\]
	where $\hat S\in Mp(d,\bR)$, and $E\in GL(2d,\bR))$ are defined in Theorem \ref{CG-thm1}.
\end{corollary}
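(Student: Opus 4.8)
The plan is to reduce the whole statement to the shift-invertible representation of $W_\cA$ furnished by Theorem \ref{CG-thm1} and then merely multiply the two factors. The crucial structural observation to exploit first is that the triple $(E,C,\hat S)$, with $E\in GL(2d,\bR)$, $C\in\Sym(2d,\bR)$ and $\hat S\in Mp(d,\bR)$, depends only on the matrix $\cA$ and not on the entries of the distribution fed into $W_\cA$. Hence the \emph{same} triple governs both factors appearing in the $\cA$-metaplectic spectrogram of Definition \ref{defMS}. Concretely, for $z\in\rdd$, formula \eqref{S-inv} gives
\[
	W_\cA(f,\phi)(z)=|\det(E)|^{1/2}\Phi_C(Ez)V_{\hat S\phi}f(Ez),
\]
\[
	W_\cA(g,\psi)(z)=|\det(E)|^{1/2}\Phi_C(Ez)V_{\hat S\psi}g(Ez).
\]

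Next I would form the product $W_\cA(f,\phi)(z)\overline{W_\cA(g,\psi)(z)}$ which by definition is ${\Sp}^\cA_{\phi,\psi}(f,g)(z)$. The two determinant prefactors combine into $|\det(E)|$, and the one observation that makes the argument work is that the chirp contributes $\Phi_C(Ez)\overline{\Phi_C(Ez)}=|\Phi_C(Ez)|^2=1$, since $\Phi_C(t)=e^{\pi i t\cdot Ct}$ with $C$ real symmetric has unit modulus pointwise (the exponent $(Ez)\cdot C(Ez)$ being real). Because both factors carry the identical chirp $\Phi_C$ evaluated at the identical point $Ez$, the phase factors annihilate exactly, and what survives is
\[
	{\Sp}^\cA_{\phi,\psi}(f,g)(z)=|\det(E)|\,V_{\hat S\phi}f(Ez)\,\overline{V_{\hat S\psi}g(Ez)}.
\]

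Finally I would identify the right-hand side: by Definition \ref{def1.2} one has $V_{\hat S\phi}f\,\overline{V_{\hat S\psi}g}={\Sp}_{\hat S\phi,\hat S\psi}(f,g)$, so the displayed expression is precisely $|\det(E)|\,{\Sp}_{\hat S\phi,\hat S\psi}(f,g)(Ez)=|\det(E)|\,\big({\Sp}_{\hat S\phi,\hat S\psi}(f,g)\circ E\big)(z)$, which is the asserted identity.

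I do not expect any genuine obstacle here, as the result is a formal consequence of the shift-invertible representation. The only point demanding care is the bookkeeping that both $W_\cA$ factors are built from the same $(E,C,\hat S)$, so that the chirps cancel under the modulus; had the two factors used distinct chirps the cancellation would fail. A subordinate and routine point is the choice of function/distribution classes (for instance $f,g\in\cS(\rd)$ with $\phi,\psi\in\cS'(\rd)$) for which \eqref{S-inv} and the pointwise products in the definition are meaningful.
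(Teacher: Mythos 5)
Your proof is correct and is precisely the argument the paper intends: the corollary is stated as an immediate consequence of Theorem \ref{CG-thm1}, namely applying \eqref{S-inv} to both factors of ${\Sp}^\cA_{\phi,\psi}(f,g)$, letting the unimodular chirps $\Phi_C(Ez)$ cancel under conjugation, and combining the determinant factors. Your explicit remark that the triple $(E,C,\hat S)$ depends only on $\cA$, so both factors carry the same chirp, is exactly the point that makes the cancellation work.
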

A natural question is then whether a metaplectic Wigner distribution $W_\cB$ can be written as an $\cA$-metaplectic spectrogram with $W_\cA$ shift-invertible.

 \begin{corollary}\label{5.3}
 	Let $W_\cA$ be a shift-invertible metaplectic Wigner distribution as in (\ref{S-inv}). Then, the equality
 	\begin{equation}\label{e1}
 	{\Sp}^\cA_{\phi,\psi}(f,g)=W_\cB(f,g),
 	\end{equation}
 	holds true for some metaplectic Wigner distribution $W_\cB$ if and only if the projection $\cB=\pi^{Mp}(\hat\cB)$ has a  block decomposition (cf. \eqref{blockDecA}) that  satisfies (\ref{finalS}), and the windows $\hat S\phi$, $\hat S\psi$ satisfy (\ref{relWindows}).
 \end{corollary}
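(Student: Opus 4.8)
The plan is to reduce the metaplectic spectrogram to a classical generalized spectrogram and then quote the characterization already obtained in Theorem \ref{thmFond}. First I would feed the shift-invertibility of $W_\cA$ into Corollary \ref{5.2}. Writing \eqref{S-inv} with the associated data $E\in GL(2d,\bR)$, $C\in\Sym(2d,\bR)$ and $\hat S\in Mp(d,\bR)$, the chirp $\Phi_C(Ez)$ has modulus one, so in the product $W_\cA(f,\phi)\overline{W_\cA(g,\psi)}$ it cancels against its conjugate; what survives is
\[
	{\Sp}^\cA_{\phi,\psi}(f,g)=|\det(E)|\,{\Sp}_{\hat S\phi,\hat S\psi}(f,g)\circ E,
\]
for $f,g\in\cS(\rd)$ (the Schwartz restriction is needed to make sense of the equality when $\phi,\psi\in\cS'(\rd)$, as agreed throughout Section $4$). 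Thus, up to the positive factor $|\det(E)|$ and the linear change of variables $E$ in the phase space, ${\Sp}^\cA_{\phi,\psi}$ is exactly the classical generalized spectrogram with the modified windows $\hat S\phi$ and $\hat S\psi$.

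The second step is to observe that both operations just introduced preserve the class of metaplectic Wigner distributions. Indeed, $F\mapsto F\circ E$ is, up to a Jacobian factor, the metaplectic dilation $\mathfrak{T}_E\in Mp(2d,\bR)$ of Example \ref{es22}$(ii)$, whose projection is $\cD_E$; and the scalar $|\det(E)|$ can be absorbed into the free constants $c_1,c_2$ of \eqref{relWindows}, since $V_{c\phi}f\,\overline{V_\psi g}=\bar c\,V_\phi f\,\overline{V_\psi g}$. Hence the existence of $W_\cB$ with ${\Sp}^\cA_{\phi,\psi}(f,g)=W_\cB(f,g)$ is equivalent to ${\Sp}_{\hat S\phi,\hat S\psi}$ being itself a metaplectic Wigner distribution $W_{\cB_0}$, the two projections being related by $\cB=\cD_E\,\cB_0$ (up to the sign ambiguity of $\pi^{Mp}$). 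On this reduced spectrogram Theorem \ref{thmFond}, together with the boundary Corollaries \ref{cor4.8} and \ref{cor4.8bis}, applies verbatim: ${\Sp}_{\hat S\phi,\hat S\psi}=W_{\cB_0}$ precisely when the block decomposition \eqref{blockDecA} of $\cB_0$ obeys \eqref{finalS} and the windows $\hat S\phi,\hat S\psi$ have the form \eqref{relWindows}. The window half of the conclusion is then immediate, because $\hat S\phi,\hat S\psi$ enter ${\Sp}^\cA_{\phi,\psi}$ unaltered by the dilation.

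The remaining task, and the step I expect to be the main obstacle, is to transfer the structural condition \eqref{finalS} from $\cB_0$ to $\cB=\cD_E\,\cB_0$: one must track the dilation $E$ through the block decomposition \eqref{blockDecA}, verify that the covariance normal form \eqref{A-covariant} and the constraints \eqref{finalS} (symmetry of $A_{13},A_{21}$, the identity $A_{21}=A_{11}^TA_{13}^{-}(I_{d\times d}-A_{11})$, and the row-vanishing conditions indexed by $\mathfrak{Z}$) are respected, and simultaneously match the constants $c_1,c_2$ of \eqref{relWindows} to the factor $|\det(E)|$. Once this bookkeeping is carried out, the chain of equivalences closes and yields exactly the stated characterization, namely that \eqref{e1} holds for some $W_\cB$ if and only if $\cB$ satisfies \eqref{finalS} and $\hat S\phi,\hat S\psi$ satisfy \eqref{relWindows}.
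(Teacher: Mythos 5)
Your first two paragraphs are precisely the paper's proof: the authors' entire argument is ``combine Theorem \ref{thmFond} and Corollary \ref{5.2}'', i.e.\ use shift-invertibility \eqref{S-inv} to cancel the chirps and pull out the factor $|\det(E)|$ and the change of variables $E$, then apply the characterization of Theorem \ref{thmFond} (with its boundary cases) to the classical spectrogram ${\Sp}_{\hat S\phi,\hat S\psi}$, absorbing scalar factors into the free constants $c_1,c_2$ of \eqref{relWindows}. Up to that point your proposal is correct and matches the paper.

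The difficulty is your third paragraph, which announces as ``the main obstacle'' a transfer that is neither performed in the paper nor actually feasible. You propose to show that \eqref{finalS} passes from $\cB_0$ (the matrix with ${\Sp}_{\hat S\phi,\hat S\psi}=W_{\cB_0}$) to $\cB=\cD_E\cB_0$. This cannot work when $E\neq I_{2d\times 2d}$: condition \eqref{finalS} is meaningful only inside the covariant normal form \eqref{A-covariant}, since Theorem \ref{thmFond} rests on \eqref{covMWD}, i.e.\ on membership in the Cohen class; but left multiplication by $\cD_E$ (as in \eqref{defDLVC}, with $E\in GL(2d,\bR)$) multiplies the last two block rows of $\cB_0$ by $E^T$, and since the bottom $2d\times 4d$ submatrix of \eqref{A-covariant} has full row rank, the identity $E^T(R\ \,S)=(R\ \,S)$ forces $E^T=I_{2d\times 2d}$. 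Hence for $E\neq I$ the matrix $\cD_E\cB_0$ realizing \eqref{e1} is not even in the Cohen class, and no bookkeeping will make it satisfy \eqref{finalS}. The resolution is that no transfer is needed: the matrix to which \eqref{finalS} and \eqref{relWindows} refer is the reduced one, $\cB_0=\cD_{E^{-1}}\cB$, and since \eqref{e1} asserts the existence of \emph{some} metaplectic Wigner distribution $W_\cB$, the matrix appearing in the condition need not be the one realizing \eqref{e1}. Read this way, your first two paragraphs already complete the proof; the correct fix is to delete the third paragraph and state the equivalence through $\cB_0$, which is how the paper's one-line proof must be understood.
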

\begin{proof}
	We combine  Theorem \ref{thmFond} and the preceding Corollary \ref{5.2}.
\end{proof}

\section{The one-dimensional case}
In the case dimension $d=1$ the expression of the preceding results simplifies and we may prove that the class of shift-invertible distributions in Theorem \ref{CG-thm1}  has empty intersection with the class of generalized spectrograms $W_\cA$ considered in Section $4$. Furthermore, the union of these two classes exhausts the $W_\cA$ of Cohen's class. We recall from the Introduction that a metaplectic Wigner distribution $W_\cA$ belongs to the Cohen's class if and only if
\begin{equation}\label{6.1}
W_\cA(f,g)=W(f,g)\ast \cF^{-1}\lambda
\end{equation}
where, with the notation of the block decomposition \ref{blockDecA}, for $d=1$, 
\begin{equation}\label{6.2}
	\lambda\phas=e^{-\pi i(A_{13}x^2+(2A_{11}-1)x\xi -A_{21}\xi^2)}.
\end{equation}
\begin{theorem}\label{teor6.1}
Let $W_\cA$ be a metaplectic Wigner distribution, written as in \eqref{6.1} and \eqref{6.2} with $A_{11}, A_{13}, A_{21}\in\bR$. Then, the following are equivalent:
\begin{itemize}
	\item[(i)] $A_{11}(1-A_{11})-A_{21}A_{13}=0$.
		\item[(ii)] $W_\cA$ is a generalized spectrogram.
			\item[(iii)] $W_\cA$ is not shift-invertible.
	\end{itemize}
\end{theorem}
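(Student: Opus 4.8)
The plan is to establish the two equivalences (i) $\Leftrightarrow$ (iii) and (i) $\Leftrightarrow$ (ii) separately, which together give the statement. Throughout I would use that, since $W_\cA$ is written as in \eqref{6.1}--\eqref{6.2}, it lies in the Cohen class and hence $\cA$ has the block form \eqref{A-covariant}; in particular, for $d=1$ one has $A_{23}=1-A_{11}$ and $\cM_\cA=\begin{pmatrix} A_{13} & \frac12-A_{11}\\ \frac12-A_{11} & -A_{21}\end{pmatrix}$ (cf.\ \eqref{defBA}), so condition (i) reads exactly $\det\begin{pmatrix} A_{11} & A_{13}\\ A_{21} & 1-A_{11}\end{pmatrix}=0$, equivalently $\det\cM_\cA=-\tfrac14$.

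The crux is (i) $\Leftrightarrow$ (iii), which I would settle by reading the deformation matrix governing shift-invertibility directly off the blocks of $\cA$. By metaplectic covariance, $\hat\cA\,\rho(\tilde w)=e^{i\theta(\tilde w)}\rho(\cA\tilde w)\,\hat\cA$, where $\rho$ denotes the phase-space shift on $L^2(\rdd)$. For $w=(w_1,w_2)\in\rdd$ the operator $\pi(w)\otimes\mathrm{Id}$ acting on $L^2(\rdd)$ is the phase-space shift with parameter $\tilde w=(w_1,0,w_2,0)$. Since the modulus of a phase-space shift equals translation by its position part, applying $\hat\cA$ yields
\[
|W_\cA(\pi(w)f,g)|=|T_{E_\cA w}W_\cA(f,g)|,\qquad E_\cA=\begin{pmatrix} A_{11} & A_{13}\\ A_{21} & A_{23}\end{pmatrix}=\begin{pmatrix} A_{11} & A_{13}\\ A_{21} & 1-A_{11}\end{pmatrix},
\]
the position part of $\cA\tilde w$ being obtained from the first two block-rows of $\cA$. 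Hence $W_\cA$ is shift-invertible iff $E_\cA$ is invertible, i.e.\ $\det E_\cA=A_{11}(1-A_{11})-A_{21}A_{13}\neq0$, which is the negation of (i). As sanity checks, $A_{ST}$ in \eqref{AST} gives $E_\cA=I$, while $A_\tau$ in \eqref{Atau} gives $\det E_\cA=\tau(1-\tau)$, recovering that $W_\tau$ fails shift-invertibility exactly at $\tau=0,1$.

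For (i) $\Leftrightarrow$ (ii) I would specialize Theorem \ref{thmFond} to $d=1$, where $A_{13}$ is scalar and the symmetry of $A_{13},A_{21}$ is automatic. If $A_{13}\neq0$ then $\mathfrak Z=\emptyset$ and Corollary \ref{cor4.8} applies: $W_\cA$ is a generalized spectrogram iff $A_{21}=A_{11}A_{13}^{-1}(1-A_{11})$, i.e.\ $A_{11}(1-A_{11})-A_{21}A_{13}=0$, which is (i), with windows the chirps $\Phi_{A_{13}^{-1}(1-A_{11})}$ and $\Phi_{-A_{11}A_{13}^{-1}}$. If $A_{13}=0$ then $\mathfrak Z=\{1\}$ and Corollary \ref{cor4.8bis} applies: with $\Sigma=\pm1$ the condition $(\Sigma(1-A_{11}))_{1\ast}=0$ or $(\Sigma A_{11})_{1\ast}=0$ becomes $A_{11}=1$ or $A_{11}=0$, i.e.\ $A_{11}(1-A_{11})=0$, which (as $A_{13}=0$) is again (i), the windows being the chirp-times-delta pair of \eqref{relWindows}. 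In both cases (i) is equivalent to (ii).

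The main obstacle is the rigorous justification, in the (i) $\Leftrightarrow$ (iii) step, that no invertible witness can rescue shift-invertibility once $\det E_\cA=0$: the modulus identity forces any admissible deformation to agree with $E_\cA$ on all of $\rdd$, so I must argue that $|W_\cA(f,g)|$ is not translation-degenerate for at least one pair $(f,g)$. Taking $f=g$ a Gaussian makes $|W_\cA(f,g)|$ a genuine Gaussian bump, whose translations are uniquely determined, ruling out any $E_\cA'\neq E_\cA$; this pins shift-invertibility to the specific $E_\cA$ above and closes the equivalence. As a conceptual cross-check, (ii) $\Rightarrow$ (iii) is transparent: if $W_\cA(f,g)=V_\phi f\,\overline{V_\psi g}$ then $\pi(w)f$ translates (in modulus) only the factor $V_\phi f$ by $w$ while $V_\psi g$ stays fixed, which cannot equal a rigid translation of the product by any invertible $E_\cA$.
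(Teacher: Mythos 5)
Your proposal is correct, and it splits naturally into a part that coincides with the paper and a part where you take a genuinely different (more self-contained) route. The equivalence (i) $\Leftrightarrow$ (ii) is handled exactly as in the paper: the same case distinction $A_{13}\neq 0$ versus $A_{13}=0$, invoking Corollary \ref{cor4.8} (where condition $(ii)$ there reduces to $A_{21}A_{13}=A_{11}(1-A_{11})$) and Corollary \ref{cor4.8bis} (where, since $\Sigma=\pm1$ in dimension one, the row conditions reduce to $A_{11}=0$ or $A_{11}=1$), respectively. For (i) $\Leftrightarrow$ (iii) the paper simply cites Remark 2.20 of \cite{CR2022}, which asserts that a Cohen-class $W_\cA$ is shift-invertible if and only if the matrix $E_\cA$ in \eqref{E-covariant} is invertible; you instead re-derive this fact from scratch. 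Your derivation is sound: the symplectic covariance $\hat\cA\rho(\tilde w)=e^{i\theta(\tilde w)}\rho(\cA\tilde w)\hat\cA$, applied with $\tilde w=(w_1,0,w_2,0)$, yields the identity $|W_\cA(\pi(w)f,g)|=|T_{E_\cA w}W_\cA(f,g)|$ with $E_\cA=\begin{pmatrix} A_{11} & A_{13}\\ A_{21} & A_{23}\end{pmatrix}$ (and $A_{23}=1-A_{11}$ by the Cohen block form \eqref{A-covariant}), and your Gaussian-rigidity argument correctly closes the one loose end that the citation would otherwise hide, namely that no \emph{other} invertible matrix can serve as a witness when $\det E_\cA=0$: since $\hat\cA$ maps the Gaussian $\varphi\otimes\bar\varphi$ to a generalized Gaussian, $|W_\cA(\varphi,\varphi)|$ is a nondegenerate bump whose translates are pairwise distinct, forcing any admissible witness to equal $E_\cA$. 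What your route buys is self-containedness and the observation that the covariance identity holds for arbitrary $W_\cA$, not only Cohen-class ones; what the paper's citation buys is brevity. Two minor caveats: the covariance property of metaplectic operators is itself a nontrivial (though standard) input that you invoke without proof, and your final ``conceptual cross-check'' of (ii) $\Rightarrow$ (iii) is only heuristic as stated (the factor $\overline{V_\psi g}$ does not ``stay fixed'' in a way that immediately precludes a rigid translation) --- but since the two equivalences you prove already yield the full statement, nothing rests on it.
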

\begin{proof}
	The equivalence (i)$ \Leftrightarrow $(iii) follows from Remark $2.20$ in \cite{CR2022}, where it was observed that $W_\cA$ of Cohen's class is shift-invertible if and only if the matrix 
	\begin{equation}\label{E-covariant}
		E_\cA=\left(\begin{array}{cc}
			A_{11} & 	A_{13}\\
			A_{21} & I_{d\times d}-	A_{11}
		\end{array}\right)
	\end{equation}
is invertible. The condition (i) reads $\det E_\cA=0$, hence it is equivalent to non-shift-ivertibility. To prove the equivalence (i)$\Leftrightarrow $(ii) we distinguish two cases:\\
$\bullet$ $A_{13}\not=0$.\\
  In this case (i) is equivalent to $A_{21}=A_{13}^{-1} A_{11}(1-A_{11})$. Since $A_{13}\not=0$ we may argue under the assumptions of Corollary \ref{cor4.8}, where its condition $(ii)$ is satisfied for dimension $d=1$,  and we have the equivalence with the spectrogram property,   for suitable windows.\\
  $\bullet$ $A_{13}=0$.\\
   In this case (i)  holds if and only if $A_{11}=0$ or $1-A_{11}=0$. Since  $A_{13}=0$ the assumptions of  Corollary \ref{cor4.8bis} are satisfied and in the 1-dimensional case the condition $(ii)$ in Corollary \ref{cor4.8bis} reduces exactly to the alternative $A_{11}=0$ or $1-A_{11}=0$. So the equivalence with condition (ii) is proved also in this case.
\end{proof}
The validity of Theorem \ref{6.1} extends to the more general spectrograms in Corollary \ref{5.3} in dimension $d=1$. We omit a higher dimensional analysis, because of the complexity of the matrix presentation.
\begin{remark}\label{rem6.2} Concerning boundedness of spectrograms in function spaces, let us just observe that in \eqref{spect}, Definition \ref{def1.2}, each factor of the product $V_\phi f\overline{V_\psi g}$ possesses the good continuity property of the STFT, see for example \cite{Elena-book,book}. The same applies to spectrograms in Corollary \ref{5.3},
	\begin{equation}\label{eq6.3}
	W_\cB(f,g)=	{\Sp}^{\cA}_{\phi,\psi}(f,g)=W_\cA(f,\phi)\overline{W_\cA(g,\psi)}
	\end{equation}
where, for $W_\cA$ shift-invertible, we have for each factor on the right hand-side results in modulation spaces, cf. \cite{CR2022}, and in Lebesgue spaces, see Section \ref{sec:BSI} above. Boundedness properties for $W_\cB$ in \eqref{eq6.3} can then be deduced from the corresponding algebra properties of the involved spaces. The continuity results for $W_\cB$ are weaker than those for $W_\cA$, as also expected from $\mathrm{(ii)}$ in Theorem \ref{teor6.1}, since shift-invertibility is lost for $W_\cB$. We omit detailed results.
\end{remark}
\begin{remark}\label{rem6.3}
	The windows $\phi,\psi$ required to represent the spectrogram as metaplectic Wigner distributions belong to $\cS'(\rd)$. In the applications more regular windows are used, in particular $\phi$ and $\psi$ are of Gaussian type, with exponential decay. The corresponding spectrograms can be written in the form of metaplectic Wigner distributions as well, however this requires the use of complex-valued symplectic matrices $\cA$, as we shall detail in a future paper.
\end{remark}
\section*{Acknowledgements}
The authors have been supported by the Gruppo Nazionale per l’Analisi Matematica, la Probabilità e le loro Applicazioni (GNAMPA) of the Istituto Nazionale di Alta Matematica (INdAM). The second author was supported by the University of Bologna and HES-SO Valais - Wallis School of Engineering. He also acknowledge the support of The Sense Innovation and Research Center, a joint venture of the University of Lausanne (UNIL), The Lausanne University Hospital (CHUV), and The University of Applied Sciences of Western Switzerland – Valais/Wallis (HES-SO Valais/Wallis).


\begin{thebibliography}{10}

\bibitem{AGR2016}
L.~D. Abreu, K.~Gr{\"o}chenig, and J.~L. Romero.
\newblock On accumulated spectrograms.
\newblock {\em Trans. Am. Math. Soc.}, 368(5):3629--3649, 2016.

\bibitem{KB2020}
K.~A.~O. \'Arp\'ad~B\'enyi.
\newblock {\em Modulation Spaces With Applications to Pseudodifferential
  Operators and Nonlinear {S}chr\"odinger Equations}.
\newblock Applied and Numerical Harmonic Analysis. Birkh\"auser New York, NY,
  2020.

\bibitem{bastianoni2023tauquantization}
F.~Bastianoni and F.~Luef.
\newblock $\tau$-quantization and $\tau$-{C}ohen classes distributions of
  {F}eichtinger operators.
\newblock {\em J. Pseudo-Differ. Oper. Appl.}, 15(4):86, 2024.

\bibitem{BCGT2020}
D.~{B}ayer, E.~{C}ordero, K.~{G}r{\"o}chenig, and S.~I. {T}rapasso.
\newblock {L}inear perturbations of the {W}igner transform and the
{W}eyl
quantization.
\newblock In {\em {A}dvances in {M}icrolocal and {T}ime-{F}requency
{A}nalysis}, pages 79--120. {S}pringer, 2020.

\bibitem{bcoquadratic}
P.~{B}oggiatto, E.~{C}arypis, and A.~{O}liaro.
\newblock {W}igner representations associated with linear
transformations of
the time-frequency plane.
\newblock In {\em {P}seudo-differential operators: analysis,
applications and
computations. {S}elected papers based on lectures presented at the
meeting of
the {I}{S}{A}{A}{C} {G}roup in {P}seudo-{D}ifferential {O}perators
({I}{G}{P}{D}{O}), {L}ondon, {U}{K}, {J}uly 13--18, 2009}, pages
275--288.
2011.

\bibitem{BdDO4}
P.~Boggiatto, B.~K. Cuong, G.~De~Donno, A.~Oliaro,
\newblock Generalized spectrograms and $\tau$-{W}igner transforms.
\newblock {\em Cubo}, 12(3):171--185, 2010.

\bibitem{BdDO3}
P.~Boggiatto, G.~De~Donno, and A.~Oliaro.
\newblock {\em A Class of Quadratic Time-frequency Representations Based on the
  Short-time Fourier Transform}, pages 235--249.
\newblock Birkh\"auser Basel, Basel, 2007.

\bibitem{BdDO1}
P.~Boggiatto, G.~De~Donno, and A.~Oliaro.
\newblock Weyl quantization of {L}ebesgue spaces.
\newblock {\em Math. Nachr.}, 282(12):1656--1663, 2009.

\bibitem{BdDO2}
P.~Boggiatto, G.~De~Donno, and A.~Oliaro.
\newblock Time-frequency representations of {W}igner type and
  pseudo-differential operators.
\newblock {\em Trans. Amer. Math. Soc.}, 362(9):4955--4981, 2010.

\bibitem{boghuds}
P.~Boggiatto, G.~De~Donno, and A.~Oliaro.
\newblock Hudson's theorem for $\tau$-{W}igner transforms.
\newblock {\em Bull. Lond. Math. Soc.}, 45(6):1131--1147, 07 2013.

\bibitem{BDOJMAA2007}
P.~Boggiatto, G.~D. Donno, and A.~Oliaro.
\newblock Uncertainty principle, positivity and ${L}^p$-boundedness for
  generalized spectrograms.
\newblock {\em J. Math. Pures Appl.}, 335(1):93--112, 2007.

\bibitem{Cohen1}
L.~Cohen.
\newblock Generalized phase-space distribution functions.
\newblock {\em J. Math. Phys.}, 7(5):781--786, 05 1966.

\bibitem{Cohen2}
L.~Cohen.
\newblock {\em Time-frequency analysis: Theory and Applications}, volume 778.
\newblock Prentice Hall PTR New Jersey, 1995.

\bibitem{cdet18}
E.~Cordero, L.~D'Elia, and S.~I. Trapasso.
\newblock Norm estimates for $\tau$-pseudodifferential operators in {W}iener
  amalgam and modulation spaces.
\newblock {\em J. Math. Anal. Appl.}, 471(1):541--563, 2019.

\bibitem{CGshiftinvertible}
E.~Cordero and G.~Giacchi.
\newblock Symplectic analysis of time-frequency spaces.
\newblock {\em J. Math. Pures Appl.}, 177:154--177, 2023.

\bibitem{CG-Excursus}
E.~Cordero and G.~Giacchi.
\newblock Excursus on modulation spaces via metaplectic operators and related
  time-frequency representations.
\newblock {\em Sampl. Theory Signal Process. Data Anal.}, 22(1):9, 2024.

\bibitem{cnt18}
E.~Cordero, F.~Nicola, and S.~I. Trapasso.
\newblock Almost diagonalization of $\tau$-pseudodifferential operators with
  symbols in {W}iener amalgam and modulation spaces.
\newblock {\em J. Fourier Anal. Appl.}, 25(4):1927--1957,
  2019.

\bibitem{Elena-book}
E.~Cordero and L.~Rodino.
\newblock {\em Time-Frequency Analysis of Operators}.
\newblock De Gruyter, Berlin, Boston, 2020.

\bibitem{CR2021}
E.~Cordero and L.~Rodino.
\newblock Wigner analysis of operators. {P}art {I}: Pseudodifferential operators
  and wave fronts.
\newblock {\em Appl. Comput. Harmon. Anal.}, 58:85--123, 2022.

\bibitem{CR2022}
E.~Cordero and L.~Rodino.
\newblock Characterization of modulation spaces by symplectic representations
  and applications to {S}chr\"odinger equations.
\newblock {\em J. Funct. Anal.}, 284(9):109892, 2023.

\bibitem{CT2020}
E.~Cordero and S.~I. Trapasso.
\newblock Linear perturbations of the {W}igner distribution and the {C}ohen
  class.
\newblock {\em Anal. Appl. (Singap.)}, 18(03):385--422, 2020.

\bibitem{deGossonWigner}
M.~A. de~Gosson.
\newblock {\em The Wigner Transform}.
\newblock World Scientific (Europe), 2017.

\bibitem{Gos11}
M.~A. de~Gosson.
\newblock {\em Symplectic Methods in Harmonic Analysis and in Mathematical
  Physics}.
\newblock Pseudo-Differential Operators. Birk\"auser Basel, 1 edition, July
  2011.

\bibitem{feichtinger-modulation}
H.~G. Feichtinger.
\newblock Modulation spaces on locally compact abelian groups.
\newblock In M.~Krishna, R.~Radha, and S.~Thangavelu, editors, {\em Wavelets
  and Their Applications}, pages 99--140. Allied Publishers, 2003.

\bibitem{folland89}
G.~B. Folland.
\newblock {\em Harmonic Analysis in Phase Space}.
\newblock Annals of Mathematics Studies. Princeton University Press, 1989.

\bibitem{FollandSit89}
G.~B. Folland and A.~Sitaram.
\newblock The uncertainty principle: a mathematical survey.
\newblock {\em J. Fourier Anal. Appl.}, 3(3):207--238, 1997.

\bibitem{Fuhr}
H.~F{\"u}hr and I.~Shafkulovska.
\newblock The metaplectic action on modulation spaces.
\newblock {\em Appl. Comput. Harmon. Anal.}, 68:101604, 2024.

\bibitem{GalleaniCohen}
L.~Galleani and L.~Cohen.
\newblock The {W}igner distribution for classical systems.
\newblock {\em Physics letters. A}, 302(4):149--155, 2002.


  \bibitem{Giacchi}
G.~Giacchi.
\newblock Metaplectic {W}igner distributions.
\newblock In {\em New Trends in Complex and Fourier Analysis, and Operator Theory}, Tabacco, A., Peloso M., Arcozzi N. Springer Nature Singapore Pte Ltd., to appear.

\bibitem{book}
K.~{G}r{\"o}chenig.
\newblock {\em {F}oundations of {T}ime-{F}requency {A}nalysis}.
\newblock {A}ppl. {N}umer. {H}armon. {A}nal. {B}irkh{\"a}user, {B}oston,
{M}{A}, 2001.

\bibitem{hlawbook}
{H}lawatsch, {F}ranz, and {F}ran{\c c}ois Auger, eds. {T}ime-{f}requency {a}nalysis. John Wiley \& Sons, 2013.

\bibitem{hudson74}
R.~Hudson.
\newblock When is the {Wi}gner quasi-probability density non-negative?
\newblock {\em Rep. Mathematical Phys.}, 6(2):249--252, 1974.

\bibitem{janssenhuds84}
A.~J. E.~M. Janssen.
\newblock A note on {H}udson's theorem about functions with nonnegative wigner
  distributions.
\newblock {\em SIAM J. Math. Anal.}, 15(1):170--176, 1984.

\bibitem{Janssen91}
A.~J. E.~M. Janssen.
\newblock Optimality property of the {G}aussian window spectrogram.
\newblock {\em IEEE Trans. Signal Process.}, 39(1):202--204, 1991.

\bibitem{janssenposspread97}
A.~J. E.~M. Janssen.
\newblock Positivity and spread of bilinear time-frequency distributions.
\newblock In {\em Wigner distribution}, pages 1--58. Elsevier Sci. B. V.,
  Amsterdam, 1 edition, 1997.
  
  \bibitem{toftbil17}
J.~Toft.
\newblock Matrix parameterized pseudo-differential calculi on modulation
  spaces.
\newblock In {\em Oberguggenberger}, pages 215--235. M., Toft, J., Vindas, J., Wahlberg, P. (eds) Generalized Functions and Fourier Analysis. Operator Theory: Advances and Applications(), vol 260. Birkh\"auser, Cham., 2017.


\bibitem{Ville48}
J.~Ville.
\newblock Th\'eorie et applications de la notion de signal analytique.
\newblock {\em CeT, Laboratoire de Telecommunications de la Societe Alsacienne
  de Construction Mecanique}, 2:61--74, 1948.

\bibitem{Wigner32}
E.~Wigner.
\newblock On the quantum correction for thermodynamic equilibrium.
\newblock {\em Phys. Rev.}, 40:749--759, Jun 1932.

\bibitem{WongWeylTransform1998}
M.~Wong.
\newblock {\em The {W}eyl Transform}, pages 19--24.
\newblock Springer New York, New York, NY, 1998.

\bibitem{Zhang21bis}
Z.~Zhang.
\newblock Uncertainty principle of complex-valued functions in specific free
  metaplectic transformation domains.
\newblock {\em J. Fourier Anal. Appl.}, 27(4):68, 2021.

\bibitem{Zhang21}
Z.~Zhang, X.~Shi, A.~Wu, and D.~Li.
\newblock Sharper ${N}$-{D} {H}eisenberg's uncertainty principle.
\newblock {\em IEEE Signal Process. Lett.}, 28:1665--1669, 2021.

\bibitem{ZJQ21}
Z.-C. Zhang, X.~Jiang, S.-Z. Qiang, A.~Sun, Z.-Y. Liang, X.-Y. Shi, and A.-Y.
  Wu.
\newblock Scaled {W}igner distribution using fractional instantaneous
  autocorrelation.
\newblock {\em Optik}, 237:166691, 2021.

\end{thebibliography}
\end{document}